\DeclareMathSymbol{\twoheadrightarrow}  {\mathrel}{AMSa}{"10}
\def\A{{\mathbf A}}
\def\Q{{\mathbb Q}}
\def\Z{{\mathbb Z}}
\def\C{{\mathbb C}}
\def\F{{\mathbb F}}
\def\i{{i}}
\def\Sn{{\mathbf S}_n}
\def\RR{{\mathfrak R}}
\def\Perm{\mathrm{Perm}}
\def\Gal{\mathrm{Gal}}
\def\End{\mathrm{End}}
\def\Aut{\mathrm{Aut}}
                                                      \def\Alt{\mathrm{Alt}}
                                                                            \def\ord{\mathrm{ord}}
\def\Hom{\mathrm{Hom}}
       \def\Lie{\mathrm{Lie}}
       \def\sep{\mathrm{sep}}
\def\I{{\mathcal I}}
\def\ST{{\mathbf S}}
                                                     \def\ol{{\mathbf o}}
\def\fchar{\mathrm{char}}
\def\pr{\mathrm{pr}}
\def\M{\mathrm{M}}
                                                    \def\E{{\mathcal E}}
\def\dim{\mathrm{dim}}
                                            \def\rk{\mathrm{rk}}
\def\OC{{\mathcal O}}
\newtheorem{thm}{Theorem}[section]
\newtheorem{lem}[thm]{Lemma}
\newtheorem{cor}[thm]{Corollary}
\theoremstyle{definition}
\newtheorem{ex}[thm]{Example}
\newtheorem{rem}[thm]{Remark}
\newtheorem{sect}[thm]{}
\title[Endomorphism Algebras of Abelian varieties]
{Endomorphism Algebras of Abelian varieties with special reference to Superelliptic Jacobians}
\author[Yuri\ G.\ Zarhin]{Yuri\ G.\ Zarhin}
\thanks{Partially supported by  Simons Foundation  Collaboration grant  \# 585711. \newline
Part of this work was done in  May-June 2018 at the Max-Planck-Institut f\"ur Mathematik (Bonn, Germany), whose hospitality and support are gratefully acknowledged.}
\address{Department of Mathematics, Pennsylvania State University,
University Park, PA 16802, USA}
 \email{zarhin\char`\@math.psu.edu}
\begin{document}

\begin{abstract}
This is (mostly) a survey article.
 We use an information about Galois properties of points of small order on an abelian variety
 in order to describe its endomorphism algebra over an algebraic closure of the ground field.
 We discuss in detail applications to jacobians of cyclic covers of the projective line.

2000 Math. Subj. Class: Primary 14H40; Secondary 14K05, 11G30,
11G10

Key words and phrases: {\sl abelian varieties, superelliptic
jacobians, doubly transitive permutation groups}
\end{abstract}

\maketitle

\section{Definitions and statements}
\label{one}
\label{endo} Throughout this paper $K$ is a field and $K_a$  its
algebraic closure. We write $K^{\sep}\subset K_a$ for the separable algebraic closure of $K$ in $K_a$
and
 $\Gal(K)$ for the absolute Galois
group $\Gal(K^{\sep}/K)=\Aut(K_a/K)$.
Throughout the paper  $\ell$ is a prime different from
$\fchar(K)$.
 If $A$ is a finite set then we write $|A|$ for its cardinality.
For every  abelian varieties $X$  and
$Y$ over $K_a$  we write $\Hom(X,Y)$ for the group of all
$K_a$-homomorphisms from $X$ to $Y$.

 If $X$ is an abelian variety of positive dimension
 over $K$ then $\End_K(X)$ and  $\End(X)$ stand for the rings of all its $K$-endomorphisms and
$K_a$-endomorphisms respectively. It  is known \cite{LangAV} that all endomorphisms of $X$ are defined over $K^{\sep}$.

The ring  $\End_K(X)$ is a subring of  $\End(X)$  and they both have the same identity element (automorphism), which we denote by $1_X$. We write
$\End_K^0(X)$
and $\End^0(X)$ for the corresponding
$\Q$-algebras $\End_K(X)\otimes\Q$ and
 $\End(X)\otimes\Q$; they both are semisimple finite-dimensional algebras over the field $\Q$ of rational numbers. We have
 $$\Q\cdot 1_X \subset \End_K^0(X) \subset \End^0(X).$$

The aim of this paper is to explain how one may obtain some information about the structure of $\End^0(X)$ in certain favorable circumstances,
 knowing only
the Galois properties of certain points of prime order and the ``multiplicities'' of the action of a certain endomorphism field on the differentials of the
 first kind on $X$.  One may view this paper as an exposition of ideas
that were developed  in \cite{ZarhinL} and \cite{ZarhinMZ2,ZarhinMZ2arxiv} 
and applied to {\sl superelliptic}  jacobians and prymians \cite{ZarhinBSMF,ZarhinL,ZarhinCamb,ZarhinIsk,ZarhinL2}. 
We also use this opportunity to correct 
 inaccuracies in the statements of Theorems 1.1(ii), 3.12(ii), 5.2(ii) and Remark 3.2 of  \cite{ZarhinMZ2} and fill  gaps in the proof of Theorem 3.12(ii) \cite[p. 702]{ZarhinMZ2}   in \cite[p. 697]{ZarhinMZ2}). (See also \cite{ZarhinMZ2arxiv} for the corrected version of  \cite{ZarhinMZ2}.) We also fill a gap in   the proof of \cite[Theorem 4.2,(i) and (ii)(a)]{ZarhinL} (caused by improper use of   \cite[Theorem 4.3.2]{Herstein} in \cite[Remark 4.1]{ZarhinL}), see below Theorems \ref{inequality} and \ref{inequality2} and their proofs (Section \ref{abelianMult}). 

Here is a couple of sample results that deal with jacobians $J(C_{f,p})$ of (smooth projective models of) superelliptic curves
$$C_{f,p}: y^p=f(x).$$  
Hereafter $p$ is a prime and we assume that $\fchar(K)\ne p$ while  $f(x)\in K[x]$ is a separable polymomial of degree $n \ge 3$.
 We write $\Z[\zeta_p]$  for the ring of integers in the $p$th cyclotomic field $\Q(\zeta_p)$.
(When $p=2$ we have $\Z[\zeta_p]=\Z$ and $C_{f,2}$ becomes the hyperelliptic curve $y^2=f(x)$.) 
The choice of a  primitive
$p$th root of unity in $K_a$ gives rise to a
 natural ring  embedding  
$$\Z[\zeta_p]\hookrightarrow \End(J(C_{f,p}))$$
(see \cite{SPoonen,Poonen} and Section \ref{superE} below).  If $p$ does {\sl not} divide $n$ then the dimension
of $J(C_{f,p})$ is $(n-1)(p-1)/2$; otherwise it is  $(n-2)(p-1)/2$.

\begin{thm}[see Th. 2.1 of \cite{ZarhinMRL}, Th. 2.1 of \cite{ZarhinBSMF} and Th. 3.8 of \cite{ZarhinL}]
\label{sampleHyper}
Let us assume that  $\fchar(K) \ne 2$ and $f(x)\in K[x]$ is an irreducible polynomial of degree $n \ge 5$,
whose Galois group $\Gal(f)$ over $K$ enjoys one of the following two properties.
\begin{itemize}
\item
 $\fchar(K) \ne 3$ and $\Gal(f)$ is either the full symmetric group $\ST_n$ or the alternating group $\mathbf{A}_n$;
\item
$n \in \{11,12,22,23,24\}$ and $\Gal(f)$ is isomorphic to the corresponding  Mathieu group $\mathbf{M}_n$.
\end{itemize}
Let $C_{f,2}: y^2=f(x)$ be the corresponding hyperelliptic curve of genus $[(n-1)/2]$ over $K$ and $J(C_{f,2})$ its jacobian, which is a $[(n-1)/2]$-dimensional abelian variety over $K$.

Then $\End(J(C_{f,2}))=\Z$. In particular, $J(C_{f,2})$ is absolutely simple.
\end{thm}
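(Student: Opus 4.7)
My plan is to study the Galois action on the 2-torsion subgroup of $J(C_{f,2})$ and to apply the ``very simple module'' criterion developed in Zarhin's earlier work.

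First, since $f$ is irreducible of degree $n \ge 5$ and $\fchar(K) \ne 2$, a classical computation going back to Mumford identifies $J(C_{f,2})[2]$ as an $\F_2[\Gal(f)]$-module with the \emph{heart} of the permutation representation of $\Gal(f)$ on the roots of $f$: the kernel of the augmentation $\F_2^n \to \F_2$ when $n$ is odd, or that kernel modulo the all-ones vector when $n$ is even. In particular the $\Gal(K)$-action on $J[2]$ factors through $\Gal(f)$, and $\dim_{\F_2} J[2] = 2\dim J(C_{f,2})$ equals $n-1$ or $n-2$ accordingly.

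Next, one verifies that this heart module is \emph{very simple} as a $\Gal(f)$-module under each of the two hypotheses. For $\Gal(f) = \ST_n$ or $\An$ with $n \ge 5$ this is a standard representation-theoretic statement resting on the $2$-transitivity of the action and the simplicity of $\An$; the auxiliary hypothesis $\fchar(K) \ne 3$ presumably enters only to avoid low-degree pathologies (e.g., when $3 \mid n$ allows $f$ to split in peculiar ways). For the Mathieu groups $\MM_{11}, \MM_{12}, \MM_{22}, \MM_{23}, \MM_{24}$ the heart modules are the well-known absolutely irreducible $\F_2$-representations of dimensions $10, 10, 20, 22, 22$ associated with the (extended) binary Golay code, and very simplicity must be extracted case by case from the explicit maximal-subgroup structure of each sporadic group.

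Finally, one invokes the general criterion of \cite{ZarhinL,ZarhinMZ2}: if $J[\ell]$ is a very simple $\Gal(K)$-module, then $\End^0(J)$, which embeds into the $\Gal(K)$-centralizer of $J[\ell]$ inside $\End_{\F_\ell}(J[\ell])$, is pinned down by very simplicity together with the dimensional bounds coming from the Albert classification, and must equal $\Q$ itself. Hence $\End(J) = \Z$, and absolute simplicity is immediate because $\Z$ has no nontrivial idempotents. The main obstacle I anticipate is the verification of very simplicity in the five Mathieu cases, since unlike the $\ST_n$/$\An$ situation each sporadic group demands its own individual analysis of maximal subgroups and their orbits on the heart module, rather than a uniform argument.
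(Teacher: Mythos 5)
Your approach is the right one and matches the proofs in the cited references \cite{ZarhinMRL,ZarhinBSMF,ZarhinL}; this survey does not reprove Theorem \ref{sampleHyper} but points to those papers, and itself remarks that those earlier arguments are based on the notion of a \emph{very simple} module from \cite{ZarhinTexel,ZarhinT}. The identification of $J(C_{f,2})_2$ with $(\F_2^{\RR_f})^0$ for $n$ odd and with $(\F_2^{\RR_f})^{00}$ for $n$ even, and the dimensions $10,10,20,22,22$ for the Mathieu cases, are correct. Note also that the machinery developed \emph{within this paper} (Lemma \ref{moriZ}, Theorems \ref{endoZ}, \ref{endoZ2}, summarized for $q=2$ in Remark \ref{hyper}) is genuinely weaker: it only yields that $\End^0(J(C_{f,2}))$ is a central simple $\Q$-algebra, hence a matrix algebra over $\Q$ or over a quaternion $\Q$-algebra. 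Pinning it down to $\Q$ itself really does require the stronger very-simplicity property, so you are right to reach outside the survey's own Section \ref{sec1} toolkit.

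There is, however, a real gap in the final step, and it is precisely where the hypothesis $\fchar(K)\ne 3$ enters. Very simplicity of $J_2$ as a $\Gal(K)$-module does \emph{not} by itself force $\End^0(J)=\Q$. What it gives is a dichotomy: $\End(J)\otimes\F_2$, being a $\Gal(K)$-stable $\F_2$-subalgebra of $\End_{\F_2}(J_2)$, is either $\F_2$ (whence $\End(J)=\Z$) or all of $\End_{\F_2}(J_2)$, and the latter forces $\dim_\Q\End^0(J)=(2g)^2$, i.e.\ $J$ is isogenous over $K_a$ to a power of a supersingular elliptic curve. In characteristic zero this second branch is vacuous, but over a field of positive characteristic it must be excluded by an additional argument, and that is where $\fchar(K)\ne 3$ is used in \cite{ZarhinMRL}: one examines the image of the Galois representation on an auxiliary torsion module (at $\ell=3$) and shows it is too large to be compatible with supersingularity. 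Your guess that $\fchar(K)\ne 3$ ``presumably enters only to avoid low-degree pathologies (e.g., when $3\mid n$ allows $f$ to split in peculiar ways)'' is off the mark; nothing about the factorization of $f$ is at stake, and indeed the Mathieu cases are handled without this hypothesis because the relevant supersingular exclusion is carried out differently there. So the structure of your argument is sound through the very-simplicity verification, but ``must equal $\Q$ itself'' hides a nontrivial step that your sketch neither states nor proves, and the one hypothesis of the theorem you tried to explain is explained incorrectly.
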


\begin{thm}[see Th. 1.1 of \cite{ZarhinCamb}]
\label{sampleSuper}
Let us assume that  $\fchar(K)=0$ and $f(x)\in K[x]$ is an irreducible polynomial of degree $n \ge 5$,
whose Galois group $\Gal(f)$ over $K$ is either the full symmetric group $\ST_n$ or the alternating group $\mathbf{A}_n$. Let $p$ be an odd prime,  
 $C_{f,p}$ the corresponding superelliptic curve over $K$
 and $J(C_{f,p})$ its jacobian, which is an abelian variety over $K$.
 
Then $\End(J(C_{f,p})) =\Z[\zeta_p]$. In particular, $J(C_{f,p})$ is absolutely simple.
\end{thm}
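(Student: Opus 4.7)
The plan is to identify $\End^0(J(C_{f,p}))$ using Galois-theoretic information about a single $\F_p$-layer of torsion. Write $J:=J(C_{f,p})$ and $\lambda:=1-\zeta_p\in\Z[\zeta_p]$. Since $\Z[\zeta_p]$ acts on $J$ via the embedding described in Section \ref{superE}, we have a priori that $\Q(\zeta_p)\subset \End^0(J)$. Because $\Z[\zeta_p]$ is the maximal order of $\Q(\zeta_p)$, it suffices to establish the reverse inclusion $\End^0(J)\subset \Q(\zeta_p)$; the absolute simplicity then follows for free, since if $\End^0(J)$ equals a field, then $J$ is isotypic with $\End^0(J)=M_1(D)$, forcing $J$ to be simple.

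The first step is to describe $J[\lambda]$ as a $\Gal(K)$-module. Work of Poonen and Schaefer \cite{SPoonen,Poonen} identifies $J[\lambda]$ with the \emph{heart} of the permutation representation of $\Gal(f)$ on the root set $\RR$ of $f$: concretely, $J[\lambda]\cong \F_p^{\RR}/\F_p\cdot(1,\dots,1)$ when $p\nmid n$, and $J[\lambda]\cong \ker\bigl(\mathrm{sum}:\F_p^{\RR}\to\F_p\bigr)/\F_p\cdot(1,\dots,1)$ when $p\mid n$. In either case $J[\lambda]$ is an $\F_p$-vector space of dimension $n-1$ or $n-2$ whose $\Gal(K)$-action factors (up to a cyclotomic twist) through $\Gal(f)$.

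The core step is to verify that the $\Gal(K)$-module $J[\lambda]$ is \emph{very simple} in the sense of \cite{ZarhinL,ZarhinMZ2}, i.e.\ that every $\F_p$-subalgebra of $\End_{\F_p}(J[\lambda])$ which contains the image of $\Gal(K)$ and acts irreducibly is either $\F_p$ or all of $\End_{\F_p}(J[\lambda])$. For $\Gal(f)\in\{\Sn,\An\}$ with $n\ge 5$, the heart of the natural permutation representation on $\RR$ is doubly transitive and absolutely irreducible over $\F_p$, and very simplicity for these groups over every prime $p$ follows from the modular-representation results for $\Sn$ and $\An$ already used in \cite{ZarhinL,ZarhinMZ2}. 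I expect this to be the main obstacle: the general list of doubly transitive modular representations contains irreducible examples that fail to be very simple, and one must inspect that none of those anomalies arises for the symmetric or alternating action on $n\ge 5$ letters at any odd prime $p$.

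Having very simplicity in hand, together with the inclusion $\Z[\zeta_p]\subset \End(J)$, I plan to invoke the general criterion packaged as Theorems \ref{inequality} and \ref{inequality2} of the present paper. Their hypothesis on the multiplicities of the $|p-1|$ characters of $\Z[\zeta_p]$ acting on $H^0(C_{f,p},\Omega^1)$ is verified by an elementary computation using the standard basis of regular differentials $x^i\,dx/y^j$, which shows that these multiplicities are not all equal for $p$ odd and $n\ge 5$. The conclusion of the criterion is precisely $\End^0(J)=\Q(\zeta_p)$, whence $\End(J)=\Z[\zeta_p]$ by maximality, and $J$ is absolutely simple by the remark in the opening paragraph.
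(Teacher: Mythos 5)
Your overall strategy (describe $J[\lambda]$ as the heart of the permutation module, establish \emph{very simplicity} of that Galois module, then use eigenvalue multiplicities on $H^0(C,\Omega^1)$ to exclude the maximal case) is indeed the route of \cite{ZarhinCamb,ZarhinM}, which the paper cites for Theorem~\ref{sampleSuper}. The paper itself never re-proves this theorem; the remark following Theorem~\ref{mainJfq} explicitly defers the unconditional $\ST_n/\mathbf{A}_n$ case to \cite{ZarhinM,Xue}, noting it rests on very simple representations rather than on the machinery developed here.

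The concrete gap is in your last paragraph. Theorems~\ref{inequality} and~\ref{inequality2} contain no hypothesis about the multiplicities of the $\Z[\zeta_p]$-eigenvalues on $H^0(C_{f,p},\Omega^1)$; they are purely algebraic inequalities bounding $\dim_E(\End^0(X,i))$. The results in this paper that do use differential multiplicities are Theorems~\ref{Kn}, \ref{mainEtangent} and~\ref{mainElambda}, and each requires $n_{X,i}=1$, i.e.\ that the \emph{greatest common divisor} of the multiplicities $n_{\tau}(X,i)$ equals $1$ --- not merely that they are not all equal. By the explicit formulas in Section~\ref{superE}, when $q=p$ and $n=kp+1$ one has $n_{J^{(f,p)},i}=k$; so for $n\equiv 1\pmod p$ with $n>p+1$ (e.g.\ $p=5$, $n=11$, both allowed by the hypotheses of the theorem) the gcd condition fails and none of Theorems~\ref{Kn}, \ref{mainEtangent}, \ref{mainElambda} applies. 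The combination you describe therefore does not cover all cases of Theorem~\ref{sampleSuper}. The repair is essentially what you already half-propose: replace the appeal to Theorems~\ref{inequality}/\ref{inequality2} by the genuine very-simple dichotomy (a different statement, developed in \cite{ZarhinTexel,ZarhinT,ZarhinM}, not reproduced in this paper), and pair it with the asymmetry-based multiplicity argument of \cite{ZarhinM,ZarhinCamb}, which does not reduce to a gcd-equals-one condition.
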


\begin{thm}[see Th. 1.1 of \cite{ZarhinMZ2}, Th. 1.1 of \cite{ZarhinMZ2arxiv} and Theorem \ref{mainJfq}   below]\footnote{In Th. 1.1 of \cite{ZarhinMZ2} the assertion (ii)(a) actually is not proven and should be ignored.}
\label{doublyTransitive}
Suppose that $K$ has characteristic zero, $n\ge 4$ and $p$ is an odd prime
that does not divide $n$. Assume also that either $n = p + 1$ or $p$ does not divide
$n -1$.

Suppose that $K$ contains a primitive $p$th root of unity and
$\Gal(f)$ is a doubly transitive permuation group (on the set of roots of $f(x)$) that  does not contain a proper normal subgroup, whose
index divides $n-1$.


 Then
 $\End(J(C_{f,p}))=\Z[\zeta_p]$. In particular, $J(C_{f,p})$ is absolutely simple.
 \end{thm}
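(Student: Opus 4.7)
The plan is to combine a ``very-simple'' analysis of a mod-$\lambda$ torsion Galois representation with a numerical input coming from the action of $\zeta_p$ on holomorphic differentials. Write $\OC := \Z[\zeta_p]$ and $E := \Q(\zeta_p)$; the natural embedding $\OC \hookrightarrow \End(J(C_{f,p}))$ makes $E$ a subfield of $\End^0(J(C_{f,p}))$, and since $\OC$ is the maximal order of $E$, the desired equality $\End(J(C_{f,p})) = \OC$ will follow automatically from $\End^0(J(C_{f,p})) = E$.

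First I would study the $\lambda$-torsion subgroup $V := J(C_{f,p})[\lambda]$, where $\lambda := 1-\zeta_p$ generates the unique prime of $\OC$ above $p$. Because $p \nmid n$, this $V$ is an $\F_p$-vector space of dimension $n-1$ carrying a natural action of $\Gal(K)$ that commutes with the residual $\OC/\lambda = \F_p$-action. A standard computation for superelliptic jacobians identifies $V$, as an $\F_p[\Gal(K)]$-module, with the ``heart'' $\{(a_\alpha)\in \F_p^{\RR} : \sum_\alpha a_\alpha = 0\}$ of the permutation module, where $\RR$ is the set of roots of $f$ and $\Gal(K)$ acts through its quotient $\Gal(f)$.

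Next I would argue that $V$ is a \emph{very simple} $\F_p[\Gal(K)]$-module, meaning that every $\F_p$-subalgebra of $\End_{\F_p}(V)$ that contains $1$ and is stable under $\Gal(K)$-conjugation is either $\F_p$ or the whole $\End_{\F_p}(V)$. The double transitivity of $\Gal(f)$ combined with $p \nmid n$ already gives absolute irreducibility of $V$; the assumption that $\Gal(f)$ admits no proper normal subgroup of index dividing $n-1$ is exactly the condition needed in the group-theoretic criterion of \cite{ZarhinCamb,ZarhinL,ZarhinMZ2} to upgrade absolute irreducibility to very simplicity. Applying the general commutant-versus-very-simple theorem (cf.\ Theorems \ref{inequality} and \ref{inequality2} below) then forces the centralizer of $\OC$ in $\End^0(J(C_{f,p}))$ to coincide with $E$.

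To close the gap between ``centralizer of $\OC$'' and ``all of $\End^0(J(C_{f,p}))$'' I would invoke the numerical hypothesis ($n=p+1$ or $p\nmid n-1$) through the multiplicities of the $\zeta_p$-eigenspaces on $H^0(J(C_{f,p}),\Omega^1)$, which are computed explicitly for superelliptic curves. These multiplicities force any $\OC$-equivariant isogeny decomposition $J(C_{f,p}) \sim \prod B_i^{m_i}$ to be trivial (one factor, multiplicity one) and rule out any non-commutative enlargement of $E$ inside $\End^0$. Combined with the centralizer identification and the integrality step, this yields both $\End(J(C_{f,p})) = \Z[\zeta_p]$ and absolute simplicity. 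I expect the main difficulty to lie precisely in this last step: the very-simplicity argument controls only the centralizer of the cyclotomic order, and leveraging the arithmetic hypothesis on $(n,p)$ to rule out simultaneously higher multiplicity and a strictly larger division algebra is exactly what the corrected Theorems \ref{inequality} and \ref{inequality2} in Section \ref{abelianMult} are designed to accomplish.
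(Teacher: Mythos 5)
Your first step --- identifying $J(C_{f,p})[\lambda]$ with the heart $(\F_p^{\RR_f})^0$ of the permutation module --- is exactly what the paper does, and your last step (feeding the $\zeta_p$-eigenspace multiplicities into the argument to cut $\End^0(J(C_{f,p}),i)$ down to $E$ and then to pass from the centralizer to all of $\End^0$) is also the right direction. But the middle step, where you claim $V$ is a \emph{very simple} $\F_p[\Gal(K)]$-module, contains a genuine gap, and this step is the crux.

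First, double transitivity of $\Gal(f)$ together with $p\nmid n$ does \emph{not} give absolute irreducibility of $(\F_p^{\RR_f})^0$. What it gives, via Klemm's theorem (Lemma~\ref{double}(i) and Corollary~\ref{doubleNT}), is only $\End_{\Gal(f)}\bigl((\F_p^{\RR_f})^0\bigr)=\F_p$. In the modular setting (where $p$ may well divide $|\Gal(f)|$) the heart of the permutation module of a doubly transitive group is often indecomposable but reducible --- $\mathrm{PSL}_3(\F_2)$ on the $7$ points of $\P^2(\F_2)$ with $\ell=2$ is a standard example. So an ``absolute irreducibility'' claim would fail without further hypotheses.

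Second, the hypothesis that $\Gal(f)$ has no proper normal subgroup of index dividing $n-1$ is \emph{not} a criterion for upgrading irreducibility to very simplicity, and Theorems~\ref{inequality} and \ref{inequality2} are not ``commutant-versus-very-simple'' theorems --- they are dimension bounds for $\dim_E\End^0(X,i)$. Very simplicity (in the sense of \cite{ZarhinTexel}) is a much stronger, essentially primitivity-type condition; it has been established for $(\F_p^{\RR_f})^0$ when $\Gal(f)$ is $\ST_n$, $\A_n$, or one of a few other families, but not for an arbitrary doubly transitive group with only the normal-subgroup-index restriction. Indeed, avoiding any appeal to very simplicity is precisely the point of the route this paper takes: Klemm gives $\End_{\Gal(f)}(V)=\F_p$, the Mori--Ribet lemma (Lemma~\ref{moriO}/Corollary~\ref{moriE}) then gives $\End_K^0(J(C_{f,p}),i)=i(E)$, the multiplicity condition $n_{J^{(f,p)},i}=1$ feeds into Theorems~\ref{Kn}, \ref{tangentSN} and \ref{mainEtangent} to show $\End^0(J(C_{f,p}),i)$ is a number field of degree dividing $d_{X,E}=n-1$ over $i(E)$, and it is only at this stage that the absence of a proper \emph{normal} subgroup of index dividing $n-1$ is used (Theorem~\ref{mainElambda}) to collapse that field to $i(E)$. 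Finally, passage from the centralizer to the full endomorphism algebra is supplied by Theorem~\ref{centerJfq}, which is a separate superelliptic-specific result rather than a consequence of the very-simplicity machinery. So while your endpoints line up with the paper, the path through very simplicity would require hypotheses you don't have, and the role you assign to the normal-subgroup condition is not the one it actually plays.
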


The paper is organized as follows. Section \ref{sec1} contains basic definitions and reviews  elementary results concerning
 the structure of $\End^0(X)$ and $\End_K^0(X)$ under certain assumptions on the Galois properties of the group $X_{\ell}$ of points of prime
order $\ell$ on $X$ related to the image $\tilde{G}_{\ell,X,K}$ of the Galois group in $\Aut(X_{\ell})$. These results are generalized in
Section \ref{sec2} when $X$ admits multiplications from the ring $\OC$ of integers in a number field $E$ and $X_{\ell}$ is replaced by the
group $X_{\lambda}$ of points on $X$ that are killed by multiplication from a maximal ideal $\lambda\subset \OC$. (The results of Section
\ref{sec1} correspond to the case $\OC=\Z, E=\Q, \lambda=\ell\Z$.) In order to prove the results of Section \ref{sec2}, we need to use results
 from the theory of (central semi)simple algebras over fields, which are discussed in Section \ref{sec3}. We  prove the assertions of
Section \ref{sec2}
in Section \ref{abelianMult}. In Section \ref{tangentS} the Lie algebra $\Lie(X)$  of $X$ (which is the dual of the space of differentials of the first kind)
 enters the picture: assuming that $\fchar(K)=0$, we discuss the action of $E$ on  $\Lie(X)$, which allows us to extend the results of Section \ref{sec2}.
We are going to apply these results to superelliptic (hypergeometric) jacobians $J(C_{f,q})$ of curves $C_{f,q}$ and their natural abelian subvarieties $J^{(f,q)}$, which are provided with
the action of the $q$th cyclotomic field $E=\Q(\zeta_q)$ where $q$ is a prime power. (Here $C_{f,q}$ is the smooth projective module of the affine curve
 $y^q=f(x)$ where $f(x)$ is a polynomial without multiple roots.)
 In order to do this, we need to discuss certain constructions
related to permutation groups and permutation modules, which is done in Section \ref{permutation}. Section \ref{superE} contains results
about endomorphism algebras of  $J^{(f,q)}$. Section \ref{heartqn} contains auxiliary results about the structure of the Galois module $J^{(f,q)}_{\lambda}$
where $\lambda$ is the maximal ideal of the $q$th cyclotomic ring $\Z[\zeta_q]$ generated by $(1-\zeta_q)$.

{\bf Acknowledgements}. I am deeply grateful to Jiangwei Xue, who had read the first version of the manuscript and made numerous valuable comments and suggestions that helped to improve the exposition. 
Part of this  work was done in June 2017 during my stay at Steklov Mathematical Institute (Russian Academy of Sciences, Moscow), whose hospitality is gratefully acknowledged.

\section{Definitions and first statements}
\label{sec1}

 \begin{sect}
 \label{split0}
 We write $C_{K,X}$ and $C_X$ for the centers of  $\End_K^0(X)$ and $\End^0(X)$. Both  $C_{K,X}$ and $C_X$  are isomorphic to direct sums of number fields; each of those fields is either totally real or CM. It is well known that $X$ is $K$-isogenous to a self-product of a $K$-simple  abelian variety $Z_K$  (respectively, is isogenous over $K_a$ to a self-product of an absolutely simple abelian variety $Z$ over $K_a$) if and only if $C_{K,X}$ (respectfully, $C_X$) is a  field. If this is the case then
  there is a canonical isomorphism between the fields $C_{K,X}$ and $C_{K,Z_K}$ (respectfully between  the fields $C_X$ and $C_Z$).  In addition,  $C_X$  is a field if and only if $\End^0(X)$  is a simple $\Q$-algebra. In general,
the semisimple $\Q$-algebra $\End^0(X)$ splits into a finite direct
sum
$$\End^0(X)= \sum_{s\in \I(X)} D_s$$
 of simple $\Q$-algebras $D_s$. (Here the finite nonempty set  $\I(X)$ is identified with the
 set of (nonzero) minimal two-sided ideals in $\End^0(X)$.) Let $e_s$ be the identity element of $D_s\subset \End^0(X)$. We have
 $$1_X=\sum_{s\in \I(X)}e_s\in \End^0(X), \ e_s^2=e_s, \ e_s e_t=0 \ \forall s \ne t.$$
 Let us choose a positive integer $N$ such that all $Ne_s \in \End(X)$ and consider
 $$X_s:=(Ne_s)(X) \subset X,$$
 which is an abelian subvariety of $X$ that is defined over $K_a$.

 The following assertion is contained in \cite[Remark 1.4 on pp. 192-193]{ZarhinL}.

 \begin{lem}
 \label{remL}
 \begin{itemize}
 \item[(i)]
 The $\Q$-algebras $D_s$ and $\End^0(X_s)$ are isomorphic. In particular, $\End^0(X_s)$
 is a simple $\Q$-algebra, i.e., $X_s$ is isogenous over $K_a$ to a self-product of simple abelian variety over $K_a$.
 \item[(ii)]
 $\Hom(X_s,X_t)=\{0\}$ for each $s \ne t$ .
 \item[(iii)]
 The natural $K_a$-homomorphism of abelian varieties
 $$\Pi_X: \prod_{s\in \I(X)}X_s \to X, \  \{x_s\}_{s\in \I(X)} \mapsto \sum_{s\in \I(X)} x_s$$
 is an isogeny.
 \end{itemize}
 \end{lem}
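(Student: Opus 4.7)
The plan is to leverage the orthogonality $e_se_t=\delta_{st}e_s$ and the decomposition $1_X=\sum_s e_s$ together with a clean factorization of $Ne_s$ through $X_s$. Since $X_s=(Ne_s)(X)$, write $Ne_s$ as the composition $X\xrightarrow{\pi_s}X_s\xrightarrow{\iota_s}X$, where $\iota_s$ is the inclusion and $\pi_s$ the resulting surjection. The identity $e_s^2=e_s$ yields $\pi_s\iota_s=[N]$ on $X_s$, while $e_se_t=0$ (for $s\ne t$) yields $(Ne_t)\iota_s=0$, whence $\pi_t\iota_s=0$. These two formulas are the only formal ingredients needed.

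For (iii), the natural candidate is $\Psi:X\to\prod_s X_s$, $x\mapsto(\pi_s(x))_s$. A direct computation gives $\Pi_X\circ\Psi=\sum_s\iota_s\pi_s=\sum_s Ne_s=N\cdot 1_X$, and (using the identities above) $\Psi\circ\Pi_X=[N]$ on $\prod_s X_s$. Since $[N]$ is an isogeny on both abelian varieties, $\Pi_X$ and $\Psi$ must themselves be isogenies.

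For (ii), given $\phi\in\Hom(X_s,X_t)$ with $s\ne t$, I form $\tilde\phi:=\iota_t\phi\pi_s\in\End(X)$. Post-composing with $Ne_r=\iota_r\pi_r$ and using $\pi_s\iota_r=0$ for $r\ne s$ shows that $\tilde\phi\cdot Ne_r$ vanishes for $r\ne s$ and equals $N\tilde\phi$ for $r=s$; the symmetric computation gives $Ne_r\cdot\tilde\phi=0$ for $r\ne t$. Thus in $\End^0(X)$ we have $\tilde\phi=e_t\tilde\phi e_s$. But in the decomposition $\End^0(X)=\bigoplus_r D_r$, the orthogonality of the $e_r$ forces $e_t\End^0(X)e_s=0$ whenever $s\ne t$, so $\tilde\phi=0$; surjectivity of $\pi_s$ and injectivity of $\iota_t$ then yield $\phi=0$.

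For (i), I send $\alpha\in D_s$ to its restriction $\alpha|_{X_s}$; this is well defined because the identities $\alpha=e_s\alpha=\alpha e_s$ imply (after clearing denominators to land in $\End(X)$) that $\alpha$ factors through $X_s$ and preserves it. Injectivity: if $\alpha\iota_s=0$ then $\alpha\cdot Ne_s=\alpha\iota_s\pi_s=0$, giving $N\alpha=0$ and hence $\alpha=0$. Surjectivity: given $\psi\in\End^0(X_s)$, set $\hat\psi:=\iota_s\psi\pi_s$; the same computation as in (ii) with $t=s$ shows $\hat\psi\in e_s\End^0(X)e_s=D_s$, and $\hat\psi|_{X_s}=(\pi_s\iota_s)\psi=N\psi$, so $\hat\psi/N$ is the required preimage. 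The only real bookkeeping nuisance throughout is carefully tracking the factor $N$ needed to promote the rational idempotents $e_s$ to genuine endomorphisms; once that is straight, everything is a formal consequence of the idempotent decomposition.
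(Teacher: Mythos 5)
Your proof is correct. The paper itself does not supply a proof of Lemma~\ref{remL}; it merely cites \cite[Remark 1.4]{ZarhinL}, so your self-contained derivation fills in what the text leaves to the reference. Your approach is the standard one implicit in that remark: factor $Ne_s=\iota_s\pi_s$ through the image $X_s$, extract the two identities $\pi_s\iota_s=[N]$ and $\pi_t\iota_s=0$ ($s\ne t$) from $e_s^2=e_s$ and $e_se_t=0$ by cancelling the epi $\pi_s$ and mono $\iota_t$, and then read off all three claims formally. A few points worth making explicit if you write this up: in (iii), after obtaining $\Pi_X\Psi=[N]$ and $\Psi\Pi_X=[N]$ one should note that finiteness of both kernels forces $\dim(\prod_sX_s)=\dim X$, so the maps are surjective and hence isogenies; in (i), the fact that $\alpha\in D_s$ satisfies $\alpha=e_s\alpha e_s$ shows (after clearing denominators) that a suitable integer multiple of $\alpha$ both lands in and is defined on $X_s$, and this is what makes ``restriction to $X_s$'' a well-defined $\Q$-algebra homomorphism sending $e_s$ to $1_{X_s}$; and in (ii), the identification $e_t\End^0(X)e_s=\{0\}$ for $s\ne t$ follows from $\End^0(X)e_s=D_s$ and $D_tD_s=\{0\}$. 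None of these are gaps --- they are exactly the bookkeeping you flagged --- but spelling them out would make the argument watertight.
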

 \end{sect}

 \begin{sect}
 \label{kappaX}
 Since $X$ is defined over $K$, each $\sigma \in \Gal(K)$ and $u \in \End(X)$ give rise to $^{\sigma}u \in \End(X)$ such that
 $$^{\sigma}u(x)=\sigma (u (\sigma^{-1}x)) \ \forall x \in X(K_a).$$
 This gives us a continuous group homomorphism \cite{Silverberg}
 $$\kappa_{X,K}: \Gal(K) \to \Aut(\End(X)), \ k_X(\sigma)(u)= ^{\sigma}u \ \forall \sigma \in \Gal(K), u \in \End(X)$$
 with finite image.
 (Here $\Aut(\End(X))$ is provided with discrete topology).
 If $L/K$ is a finite separable algebraic field extension with $L\subset K^{\sep}$ then $\Gal(L)$ is an open  subgroup of finite index in $\Gal(K)$ and
 the restriction of $\kappa_{X,K}$ to $\Gal(L)$ coincides with
 $$\kappa_{X,L}: \Gal(L) \to \Aut(\End(X)).$$
 It is well known that
 $\End_L(X)$ coincides  with the subring $\End(X)^{\Gal(L)}$ of $\Gal(L)$-invariants, i.e.,
 $$\End_L(X)=\{u\in\End(X)\mid ^{\sigma}u =u \ \forall \sigma \in \Gal(L)\}.$$
 In particular,
$$\End_K(X)=\End(X)^{\Gal(K)}=\{u\in\End(X)\mid ^{\sigma}u =u \ \forall \sigma \in \Gal(K)\}.$$
The kernel $\ker(\kappa_{X,K})$ is a closed normal  subgroup of finite index in $\Gal(K)$ and therefore is open, i.e.
coincides with the Galois (sub)group $\Gal(\mathcal{F}_{X,K})$ of a certain overfield $\mathcal{F}_{X,K}\supset K$ such that $\mathcal{F}_{X,K}\subset K^{\sep}$ and
$\mathcal{F}_{X,K}/K$ is a finite Galois extension. Clearly, $\End_L(X)=\End(X)$ (i.e., all endomorphisms of $X$ are defined over $L$) if and only if $L\supset \mathcal{F}_{X,K}$.
In general, $\mathcal{F}_{X,L}$ coincides with the compositum $\mathcal{F}_{X,K} L$ of $\mathcal{F}_{X,K}$ and $L$ in $K^{\sep}$.

The following assertion is contained in \cite[Remark 1.4 on pp. 192-193]{ZarhinL}.

\begin{lem}
\label{transL}
The finite subset $\{Ne_s\mid s \in \I(X)\}$ of $\End(X)$ is $\Gal(K)$-stable.
If $\End_K(X)$ has no zero divisors then the action of $\Gal(K)$ on $\I(X)$ is transitive
and
$$\dim(X_s)=\dim(X)/|\I(X)|,$$
which does not depend on a choice of $s\in \I(X)$.
\end{lem}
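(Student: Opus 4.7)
The plan is to leverage the intrinsic nature of the decomposition of $\End^0(X)$ into minimal two-sided ideals, then to exploit the domain hypothesis via orbit sums of the central idempotents $e_s$.

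For the Galois stability of $\{N e_s\}$: the family $\{D_s : s \in \I(X)\}$ of minimal two-sided ideals of $\End^0(X)$ is intrinsic to the $\Q$-algebra structure, so any $\Q$-algebra automorphism of $\End^0(X)$ must permute these ideals and hence permute their unit elements $e_s$. The action $\kappa_{X,K}$ of $\Gal(K)$ on $\End(X)$ by ring automorphisms extends by $\otimes\Q$ to an action on $\End^0(X)$ by $\Q$-algebra automorphisms, so there is a well-defined permutation action of $\Gal(K)$ on $\I(X)$ with $\sigma\cdot e_s = e_{\sigma\cdot s}$. Multiplying by $N$ yields the first assertion.

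For transitivity, first observe that $\End_K(X)$ has no zero divisors if and only if $\End_K^0(X)$ is a domain (since $\End(X)$ is a finitely generated torsion-free $\Z$-module, so $\End_K(X)\hookrightarrow \End_K^0(X)$ and any zero-divisor relation downstairs yields one upstairs after clearing denominators). Let $\mathcal{O}_1,\dots,\mathcal{O}_r$ be the $\Gal(K)$-orbits in $\I(X)$, and put
$$f_i := \sum_{s\in \mathcal{O}_i} e_s \in \End^0(X).$$
Each $f_i$ is $\Gal(K)$-invariant and therefore lies in $\End_K^0(X)$; the orthogonality relations from Section \ref{split0} give $f_i^2=f_i$, $f_if_j=0$ for $i\neq j$, and $\sum_i f_i = 1_X$. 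If $r\ge 2$, then $f_1,f_2\in\End_K^0(X)$ are nonzero with $f_1f_2=0$, contradicting the domain hypothesis; hence $r=1$.

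For the dimension statement: since $X$ itself is defined over $K$, for any $u\in\End(X)$ and $\sigma\in\Gal(K)$ one has $\sigma(u(X))=({}^\sigma u)(X)$, so
$$\sigma(X_s)=\sigma\bigl((Ne_s)(X)\bigr)=(Ne_{\sigma\cdot s})(X)=X_{\sigma\cdot s}.$$
Thus $\sigma$ induces an isomorphism of abelian $K_a$-varieties $X_s\xrightarrow{\sim} X_{\sigma\cdot s}$, and by the transitivity just proved, all $X_s$ share a common dimension $d$. The isogeny $\Pi_X$ of Lemma \ref{remL}(iii) forces $|\I(X)|\cdot d = \dim(X)$, giving the formula. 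The only subtlety I anticipate is keeping track of where the idempotents live — the $e_s$ are in $\End^0(X)$ while the $Ne_s$ are in $\End(X)$ — and verifying that ``no zero divisors'' is preserved under tensoring with $\Q$; neither is a real obstacle.
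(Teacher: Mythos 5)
Your proof is correct and uses the same orbit-sum idempotent argument that the paper itself employs in the analogous settings of Lemma \ref{transA} and Theorem \ref{invariants}(iiibis); the paper does not reprove Lemma \ref{transL} but cites \cite[Remark 1.4]{ZarhinL} for exactly this standard argument. One small imprecision: $\sigma$ does not give an isomorphism of $K_a$-varieties $X_s \to X_{\sigma\cdot s}$ (it is only $\sigma$-semilinear, exhibiting $X_{\sigma\cdot s}$ as the Galois conjugate ${}^\sigma X_s$), but Galois conjugate varieties have the same dimension, so the conclusion stands.
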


 \begin{cor}
 \label{divL}
 If $\End_K^0(X)$ is a number field then the action of $\Gal(K)$ on $\I(X)$ is transitive
 and
 $|\I(X)|$ divides $\dim(X)$.
 \end{cor}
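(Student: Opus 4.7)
The plan is to derive this corollary directly from Lemma \ref{transL}, which is essentially already set up for this purpose. The main observation is that if $\End_K^0(X)$ is a number field, then it is an integral domain, and therefore its subring $\End_K(X)$ has no zero divisors. This is precisely the hypothesis of Lemma \ref{transL}.

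First I would note that $\End_K(X)$ embeds into $\End_K^0(X) = \End_K(X) \otimes \Q$ (since $\End_K(X)$ is torsion-free as an additive group, being a subring of $\End(X)$, which is known to be a free $\Z$-module of finite rank). Because a number field contains no zero divisors, the same holds for $\End_K(X)$.

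Now I would invoke Lemma \ref{transL}: the action of $\Gal(K)$ on $\I(X)$ is transitive, and every $X_s$ for $s \in \I(X)$ satisfies $\dim(X_s) = \dim(X)/|\I(X)|$. Since each $\dim(X_s)$ is a nonnegative integer and at least one of them is positive (so all of them are, by the common value), the rational number $\dim(X)/|\I(X)|$ must in fact be a positive integer. This is exactly the statement that $|\I(X)|$ divides $\dim(X)$.

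There is no real obstacle here; the corollary is a formal consequence of the preceding lemma once one observes that a number field, being a field (in particular a domain), qualifies as a ring without zero divisors. The only thing to be slightly careful about is the passage from $\End_K^0(X)$ being a number field to $\End_K(X)$ being zero-divisor-free, which is immediate from the inclusion $\End_K(X) \hookrightarrow \End_K^0(X)$.
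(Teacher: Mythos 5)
Your proof is correct and takes essentially the same route as the paper: observe that $\End_K(X)$ inherits the absence of zero divisors from the number field $\End_K^0(X)$, apply Lemma \ref{transL} to get transitivity and the formula $\dim(X_s)=\dim(X)/|\I(X)|$, and conclude divisibility from integrality. The paper phrases the first step by noting $\End_K(X)$ is an order in the number field, but this is the same observation.
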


 \begin{proof}
 Since  $\End_K^0(X)$ is a number field, $\End_K(X)$ is an order in this field and therefore has no zero divisors.
 So, we may apply Lemma \ref{transL} and get the desired  transitivity  and the equality $\dim(X_s)=\dim(X)/|\I(X)|$. Since all three numbers
 $\dim(X_s),\dim(X)$ and $|\I(X)|$ are nonzero integers, we conclude that  $|\I(X)|$ divides $\dim(X)$.
 \end{proof}


 \begin{thm}
 \label{transitive0}
 Let $F/K$ be a finite Galois field extension such that $F\subset K^{\sep}$ and all endomorphisms of $X$ are defined over $F$.
 If  $\End_K^0(X)$ is a number field and $\Gal(F/K)$ does not contain a proper subgroup, whose index divides $\dim(X)$  then
 $\I(X)$  is a singleton, i.e., $\End^0(X)$ is a simple $\Q$-algebra.
 \end{thm}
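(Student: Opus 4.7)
The plan is to combine Corollary \ref{divL} with the observation that all the Galois action on $\I(X)$ must factor through $\Gal(F/K)$, and then exploit the hypothesis on subgroup indices to force $|\I(X)|=1$.

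First I would note that since all endomorphisms of $X$ are defined over $F$, the homomorphism $\kappa_{X,K}\colon \Gal(K)\to \Aut(\End(X))$ described in \ref{kappaX} factors through $\Gal(F/K)$, because $\Gal(F)\subset \ker(\kappa_{X,K})$. Since the subset $\{Ne_s\mid s\in \I(X)\}$ of $\End(X)$ is $\Gal(K)$-stable (Lemma \ref{transL}), we obtain an induced action of $\Gal(F/K)$ on $\I(X)$ that coincides with the given $\Gal(K)$-action through the natural surjection $\Gal(K)\twoheadrightarrow \Gal(F/K)$.

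Next, since $\End_K^0(X)$ is a number field, Corollary \ref{divL} tells us that $\Gal(K)$ acts transitively on $\I(X)$, and $|\I(X)|$ divides $\dim(X)$. By the factorization just noted, the induced action of $\Gal(F/K)$ on $\I(X)$ is also transitive. Picking any $s_0\in\I(X)$, its stabilizer $H\subset \Gal(F/K)$ is therefore a subgroup of index exactly $|\I(X)|$, and this index divides $\dim(X)$.

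Now I invoke the hypothesis: $\Gal(F/K)$ contains no proper subgroup whose index divides $\dim(X)$. Hence $H$ cannot be a proper subgroup, so $H=\Gal(F/K)$, which forces $|\I(X)|=1$. Consequently the decomposition $\End^0(X)=\sum_{s\in \I(X)} D_s$ consists of a single summand, and $\End^0(X)=D_{s_0}$ is a simple $\Q$-algebra, as desired.

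I do not expect any serious obstacle here — the result is essentially a formal consequence of Corollary \ref{divL} together with the elementary orbit–stabilizer argument applied to the finite group $\Gal(F/K)$; the only thing one must check carefully is that the $\Gal(K)$-action on $\I(X)$ genuinely factors through $\Gal(F/K)$, which is immediate from the hypothesis that all endomorphisms are defined over $F$.
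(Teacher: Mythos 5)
Your proof is correct and follows essentially the same route as the paper: factoring the $\Gal(K)$-action on $\I(X)$ through $\Gal(F/K)$, invoking Corollary \ref{divL} for transitivity and the divisibility $|\I(X)|\mid\dim(X)$, and then applying orbit--stabilizer to conclude. The only cosmetic difference is that you name the stabilizer explicitly, whereas the paper merely asserts the existence of a subgroup of index $|\I(X)|$.
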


 \begin{proof}
 Since all endomorphisms of $X$ are defined over $F$,
 $$F\supset \mathcal{F}_{X,K},  \ \Gal(F)\subset \Gal(\mathcal{F}_{X,K})$$
 and $\kappa_{X,K}: \Gal(K) \to \Aut(\End(X))$ factors through the quotient $\Gal(K)/\Gal(F)=\Gal(F/K)$.
 This implies that the action of $\Gal(K)$ on $\I(X)$ also factors through $\Gal(F/K)$.
 By Corollary \ref{divL} $\Gal(K)$ acts transitively on $\I(X)$ and therefore the corresponding
 $\Gal(F/K)$-action on $\I(X)$ is also transitive. This implies that $\Gal(F/K)$ has a subgroup of index $|\I(X)|$.
 By Corollary \ref{divL}, $|\I(X)|$ divides $\dim(X)$ and therefore this subgroup must coincide with the whole
 $\Gal(F/K)$, i.e.,  $\I(X)$  is a singleton.
 \end{proof}
 \end{sect}

 Let $X_{\ell}$ be the kernel of multiplication by $\ell$ in $X(K_a)$. It is well known \cite{LangAV,MumfordAV}  that $X_{\ell}$  is a $\Gal(K)$-invariant subgroup of $X(K^{\sep})$, which is (as a group) a $2\dim(X)$-dimensional
 vector space over the prime finite field $\F_{\ell}$ of characteristic $\ell$. This gives rise to the natural continuous group homomorphism
 $$\tilde{\rho}_{\ell,X,K}: \Gal(K) \to \Aut_{\F_{\ell}}(X_{\ell}),$$
 whose image we denote by $\tilde{G}_{\ell,X,K}$. By definition, we get the surjective continuous  homomorphism
 $$\tilde{\rho}_{\ell,X,K}: \Gal(K) \twoheadrightarrow  \tilde{G}_{\ell,X,K}\subset \Aut_{\F_{\ell}}(X_{\ell}).$$
 One may view the vector space  $X_{\ell}$ as  (faithful) $\tilde{G}_{\ell,X,K}$-module.

 The next well known lemma goes back to K. Ribet \cite{Ribet3} and S. Mori \cite{Mori}.

 \begin{lem}
 \label{moriZ}
 ( \cite[Lemma 1.2 on p. 191]{ZarhinL})
 If the centralizer
 $$\End_{\tilde{G}_{\ell,X,K}}(X_{\ell})=\F_{\ell}$$
 then
 $$\End_K(X)=\Z, \ \End_K^{0}(X)=\Q.$$
 \end{lem}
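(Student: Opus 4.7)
The plan is to pass from endomorphisms of $X$ to endomorphisms of $X_{\ell}$ and exploit the hypothesis on the centralizer. The basic observation is that any $u\in \End_K(X)$ commutes with the action of $\Gal(K)$ on $X(K_a)$, hence its restriction $u|_{X_{\ell}}$ is an $\F_{\ell}$-linear endomorphism of $X_{\ell}$ that commutes with $\tilde{G}_{\ell,X,K}$. Under the assumption, this yields a reduction ring homomorphism
$$
r_{\ell}\colon \End_K(X)\ \longrightarrow\ \End_{\tilde{G}_{\ell,X,K}}(X_{\ell})=\F_{\ell}.
$$

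Next I would identify the kernel of $r_{\ell}$. If $u\in \End_K(X)$ kills $X_{\ell}$ then, because $\ell$ is invertible in $\End^0(X)$, one can write $u=\ell v$ for some $v\in \End(X)$; since $u$ is defined over $K$ and $\ell\ne 0$, the quotient $v=u/\ell$ is also defined over $K$, so $v\in \End_K(X)$. Hence $\ker(r_{\ell})=\ell\cdot \End_K(X)$, giving an injection of abelian groups
$$
\End_K(X)/\ell\,\End_K(X)\ \hookrightarrow\ \F_{\ell}.
$$

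Now I would use the fact (standard for abelian varieties, \cite{MumfordAV,LangAV}) that $\End_K(X)$ is a finitely generated free $\Z$-module; set $r:=\mathrm{rk}_{\Z}\End_K(X)$. Then $\End_K(X)/\ell\,\End_K(X)\cong \F_{\ell}^{\,r}$, so the displayed injection forces $r\le 1$. On the other hand $\Z\cdot 1_X\subset \End_K(X)$ gives $r\ge 1$, so $r=1$. Since $\End_K(X)$ is a $\Z$-order in the $\Q$-algebra $\End_K^0(X)=\End_K(X)\otimes\Q$ of dimension $1$, we get $\End_K^0(X)=\Q$ and the only order it contains is $\Z$, whence $\End_K(X)=\Z$.

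There is no real obstacle here: the argument is essentially a one-line reduction modulo $\ell$, and the only subtlety worth highlighting is the identification of $\ker(r_{\ell})$ with $\ell\cdot\End_K(X)$, which uses both divisibility by $\ell$ inside $\End(X)$ and the fact that this division preserves the field of definition.
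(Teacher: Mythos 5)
Your argument is correct and follows the standard route for this lemma. One small point of exposition is worth flagging: the implication ``$u|_{X_\ell}=0\ \Rightarrow\ u=\ell v$ with $v\in\End(X)$'' is not a consequence of $\ell$ being invertible in $\End^0(X)$ (that only gives $v\in\End^0(X)$); it comes from the factorization property of abelian varieties, namely that any homomorphism $u\colon X\to X$ annihilating $\ker(\ell_X)=X_\ell$ factors as $u=v\circ\ell_X$ with $v\in\End(X)$ (here $\ell\neq\fchar(K)$ is used). Once that is supplied, the Galois descent of $v=u/\ell$ to $\End_K(X)$ (using that $\End(X)$ is torsion-free), the injection $\End_K(X)/\ell\End_K(X)\hookrightarrow\F_\ell$, and the rank computation giving $\End_K(X)=\Z$ are exactly as they should be, and this is essentially the same mod-$\ell$ reduction argument as in the cited source.
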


 The next statement follows readily  from  \cite[Th. 1.5 on pp. 193--194]{ZarhinL}.

 \begin{thm}
 \label{endoZ}
 Let us assume that
  $\End_{\tilde{G}_{\ell,X,K}}(X_{\ell})$ is a field.
 Suppose that $\tilde{G}_{\ell,X,K}$   does not contain a proper subgroup, whose  index divides $\dim(X)$.
 Then $\End^0(X)$ is a  simple $\Q$-algebra.
 \end{thm}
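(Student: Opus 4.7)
The plan is to reduce Theorem~\ref{endoZ} to an application of Theorem~\ref{transitive0}. Two things must be verified: that $\End_K^0(X)$ is a number field, and that the subgroup assumption on $\tilde{G}_{\ell,X,K}$ forces the conclusion $|\I(X)|=1$.

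For the first step, I would exploit the faithful action of the endomorphism ring on the $\ell$-torsion. The classical inclusion $\End(X)/\ell\End(X)\hookrightarrow\End_{\F_{\ell}}(X_{\ell})$, coming from faithfulness of $\End(X)\otimes\Z_{\ell}$ on the Tate module, restricts on the subring of $K$-rational endomorphisms (which commute with the Galois action on $X_{\ell}$) to an injection
$$\End_K(X)/\ell\End_K(X)\hookrightarrow\End_{\tilde{G}_{\ell,X,K}}(X_{\ell}).$$
Since the target is a field, the source is a finite commutative integral domain, hence a finite field; in particular $\End_K(X)$ has no zero divisors, so the semisimple algebra $\End_K^0(X)$ is a division algebra. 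A finer analysis of the central idempotents $e_s$ modulo $\ell$ (nontrivial orthogonal central idempotents in an order would produce nontrivial idempotents in the finite field $\End_K(X)/\ell$ after rescaling, and noncommutativity would produce either commutators or matrix units that survive reduction mod $\ell$) shows that $\End_K^0(X)$ is actually commutative, hence a number field.

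For the second step, Corollary~\ref{divL} gives a transitive $\Gal(K)$-action on $\I(X)$ with $|\I(X)|$ dividing $\dim(X)$. I would argue that this action in fact factors through $\tilde{G}_{\ell,X,K}$. If $\sigma\in\Gal(K)$ fixes $X_{\ell}$ pointwise, then for every $u\in\End(X)$ the difference ${}^{\sigma}u-u$ kills $X_{\ell}$ and so lies in $\ell\End(X)$. Applied to $u=Ne_s$, with $N$ chosen so that every $Ne_s\in\End(X)$ and (after a short order-theoretic adjustment) coprime to $\ell$, this yields $N(e_{\sigma\cdot s}-e_s)\in\ell\End(X)$. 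Multiplying by $Ne_s$ and using the orthogonality $e_se_t=0$ for $s\neq t$ would force $N^{2}e_s\in\ell\End(X)$; but $N^{2}e_s$ acts on the nonzero $\ell$-torsion $(X_s)_{\ell}$ as multiplication by $N^{2}$, which is nonzero modulo $\ell$, a contradiction. Hence $\sigma\cdot s=s$ for every $s\in\I(X)$, and the action of $\Gal(K)$ on $\I(X)$ factors through $\tilde{G}_{\ell,X,K}$.

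The conclusion is then immediate: the stabilizer in $\tilde{G}_{\ell,X,K}$ of a fixed $s\in\I(X)$ is a subgroup of index $|\I(X)|$, which divides $\dim(X)$, so by the hypothesis on $\tilde{G}_{\ell,X,K}$ this stabilizer must be all of $\tilde{G}_{\ell,X,K}$. Therefore $|\I(X)|=1$ and $\End^0(X)$ is a simple $\Q$-algebra. The main obstacles are the commutativity assertion completing the first step and the arrangement of $N$ coprime to $\ell$ in the second: both are technical points about orders in semisimple $\Q$-algebras and their reductions modulo $\ell$, rather than difficulties in the group-theoretic core of the argument, which is cleanly handled by Corollary~\ref{divL} and Theorem~\ref{transitive0}.
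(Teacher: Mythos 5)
Your Step~1 is basically sound: the injection of $\End_K(X)/\ell\End_K(X)$ into the field $\End_{\tilde G_{\ell,X,K}}(X_{\ell})$ forces $\End_K(X)/\ell$ to be a finite field, and a short valuation argument then shows $\End_K(X)$ has no zero divisors; this is already enough to feed Lemma~\ref{transL} (the ``number field'' refinement can indeed be completed by a Nakayama argument on $\End_K(X)\otimes\Z_{\ell}$, but it is not needed, since Corollary~\ref{divL} and Theorem~\ref{transitive0} use the number-field hypothesis only through ``no zero divisors'').

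The genuine gap is in Step~2, where you claim that the $\Gal(K)$-action on $\I(X)$ factors through $\tilde G_{\ell,X,K}$. Your computation with the idempotents is fine as far as it goes: if $\sigma$ fixes $X_{\ell}$ pointwise and $\sigma\cdot s\ne s$, you deduce $N^{2}e_{s}\in\ell\End(X)$. But the contradiction requires $\ell\nmid N$, and you have no control over this. The set of integers $N$ with $Ne_{s}\in\End(X)$ for all $s$ is the set of multiples of one fixed integer $N_{0}$ determined by the order $\End(X)$, and $N_{0}$ can perfectly well be divisible by $\ell$ --- for instance when $X$ is a quotient of $Y_{1}\times Y_{2}$ by a diagonal subgroup of $\ell$-torsion, where $N_{0}=\ell$. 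There is no ``short order-theoretic adjustment'' that makes $N$ coprime to $\ell$. The statement you are after is actually true when $\ell\ge 3$, but for a very different reason: by Silverberg's theorem \cite{Silverberg} all endomorphisms of $X$ are already defined over $K(X_{\ell})$ for $\ell\ge 3$, so $\mathcal F_{X,K}\subset K(X_{\ell})$ and the factoring is automatic. For $\ell=2$ the factoring claim can fail, and your argument has no fallback.

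The paper sidesteps the issue entirely via the minimal-cover device of Lemma~\ref{maximal}, Remark~\ref{minR}(ii) and Remark~\ref{minLambda}, which is also how the proof of the more general Theorem~\ref{endoE} is organized: choose a finite Galois extension $F/K$ containing both $K(X_{\ell})$ and $\mathcal F_{X,K}$, and an intermediate field $\mathcal K$ maximal with $\tilde G_{\ell,X,\mathcal K}=\tilde G_{\ell,X,K}$ and $\Gal(F/\mathcal K)\twoheadrightarrow\tilde G_{\ell,X,K}$ a minimal cover. The centralizer hypothesis (hence ``no zero divisors'') persists over $\mathcal K$, Lemma~\ref{transL} gives transitivity of the $\Gal(F/\mathcal K)$-action on $\I(X)$ with $|\I(X)|$ dividing $\dim(X)$, and Remark~\ref{minR}(ii) transfers the no-proper-subgroup-of-index-dividing-$\dim(X)$ property from $\tilde G_{\ell,X,K}$ to $\Gal(F/\mathcal K)$. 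One never needs the action on $\I(X)$ to factor through $\tilde G_{\ell,X,K}$ itself. This is the argument you should give; alternatively, for $\ell\ge3$ you may cite Silverberg and proceed as you planned, but then the case $\ell=2$ still needs the minimal-cover route.
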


 The following assertion is an immediate corollary of Theorem \ref{endoZ} and \cite[Th. 1.6 on pp. 195]{ZarhinL}.

  \begin{thm}
 \label{endoZ2}
 Let us assume that
  $$\End_{\tilde{G}_{\ell,X,K}}(X_{\ell})=\F_{\ell}.$$
 Suppose that $\tilde{G}_{\ell,X,K}$   does  contain neither a proper subgroup with  index dividing $\dim(X)$
 nor a normal subgroup of index 2.
 Then $\End^0(X)$ is a  central simple $\Q$-algebra.
 \end{thm}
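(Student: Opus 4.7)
The plan is to combine Theorem \ref{endoZ} with the stated Theorem~1.6 of \cite{ZarhinL}. Since $\F_{\ell}$ is a field and the remaining hypothesis on $\tilde{G}_{\ell,X,K}$ is shared, Theorem \ref{endoZ} applies verbatim and yields that $\End^0(X)$ is a simple $\Q$-algebra. Let $C_X$ denote its center; by the structural review in Section \ref{split0}, $C_X$ is a number field that is either totally real or CM, and proving central simplicity amounts to showing $C_X=\Q$.

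By Lemma \ref{moriZ} applied to the standing hypothesis $\End_{\tilde{G}_{\ell,X,K}}(X_{\ell})=\F_{\ell}$, we obtain $\End_K^0(X)=\Q$. The action $\kappa_{X,K}$ of $\Gal(K)$ on $\End^0(X)$ preserves the center and restricts to a $\Q$-linear action on $C_X$ by ring automorphisms whose invariant subfield is $C_X\cap \End_K^0(X)=\Q$. Since $\kappa_{X,K}$ has finite image, Artin's theorem on fields of invariants makes $C_X/\Q$ a finite Galois extension whose Galois group $\Gal(C_X/\Q)$ is realized as the image of $\Gal(K)$ in $\Aut(C_X)$.

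Suppose for contradiction that $C_X\ne\Q$. Extracting a normal subgroup of index $2$ from $\Gal(C_X/\Q)$, using the CM or totally real structure of $C_X$ together with the Rosati involution (complex conjugation gives a central element of order $2$ in the CM case; the totally real case is handled by the parallel argument), and pulling it back along the surjection $\Gal(K)\twoheadrightarrow\Gal(C_X/\Q)$, produces an open normal subgroup of $\Gal(K)$ of index $2$. Since $\mathcal{F}_{X,K}\subseteq K(X_{\ell})$ by Silverberg's theorem on fields of definition of endomorphisms, the map $\kappa_{X,K}$ factors through $\tilde{\rho}_{\ell,X,K}$, and this subgroup descends to a normal subgroup of index $2$ in $\tilde{G}_{\ell,X,K}$, contradicting the hypothesis.

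The main obstacle --- and the precise content packaged in the cited Theorem~1.6 of \cite{ZarhinL} --- is the construction of the normal subgroup of index $2$ in $\Gal(C_X/\Q)$ from the bare hypothesis $C_X\ne\Q$ together with the totally-real/CM dichotomy (a general finite Galois group of a CM extension need not admit an index-$2$ quotient, so one has to use more than just complex conjugation), and the descent step tying $\kappa_{X,K}$ to $\tilde{\rho}_{\ell,X,K}$. Once these two ingredients are in place, the theorem is an immediate corollary of Theorem \ref{endoZ}.
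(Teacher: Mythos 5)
Your high-level plan --- get simplicity from Theorem \ref{endoZ}, then appeal to \cite[Th.~1.6]{ZarhinL} for centrality --- is exactly what the paper does, and the paper gives nothing further than that citation. But the mechanism you sketch for the inside of that black box would fail at two points if taken as an actual argument.

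First, ``extracting a normal subgroup of index $2$ from $\Gal(C_X/\Q)$'' via the Rosati involution is not the right move. On a CM field $C_X$, complex conjugation generates $\Gal(C_X/C_X^{+})$, a normal subgroup of \emph{order} $2$ (so of index $[C_X^{+}:\Q]$, where $C_X^{+}$ is the maximal totally real subfield), not one of index $2$; and on a totally real field it is trivial, so there is no ``parallel argument.'' You correctly suspect this is insufficient but do not name the missing ingredient. It is Albert's classification, which gives $[C_X^{+}:\Q]\mid\dim(X)$. If $C_X^{+}\neq\Q$, the surjection of the acting Galois group onto $\Gal(C_X^{+}/\Q)$ (well defined since $C_X^{+}$ is characteristic in $C_X$) has kernel a \emph{proper} subgroup of index $[C_X^{+}:\Q]$ dividing $\dim(X)$; this contradicts the first hypothesis, which therefore does \emph{double duty} beyond merely feeding Theorem \ref{endoZ}. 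Hence $C_X^{+}=\Q$, so $C_X$ is either $\Q$ (done) or an imaginary quadratic field, and only in the latter case does $\Gal(C_X/\Q)\cong\Z/2\Z$ produce the normal subgroup of index $2$ excluded by the second hypothesis.

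Second, the descent from $\Gal(K)$ to $\tilde{G}_{\ell,X,K}$ cannot be routed through $\mathcal{F}_{X,K}\subseteq K(X_{\ell})$: Silverberg's theorem requires level $\ge 3$, so this inclusion may fail when $\ell=2$, which is precisely the case used for hyperelliptic jacobians in Theorem \ref{sampleHyper}. The paper's vehicle is instead the minimal cover construction of Lemma \ref{maximal} and Remarks \ref{minR}, \ref{minLambda}: choose a finite Galois $F/K$ containing both $K(X_{\ell})$ and $\mathcal{F}_{X,K}$, and an intermediate field $\mathcal{K}$ making $\Gal(F/\mathcal{K})\twoheadrightarrow\tilde{G}_{\ell,X,K}$ a minimal cover. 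Then $\End_{\mathcal{K}}(X)=\Z$ by Lemma \ref{moriZ}, $\kappa_{X,\mathcal{K}}$ factors through $\Gal(F/\mathcal{K})$, and both subgroup-index hypotheses transfer from $\tilde{G}_{\ell,X,K}$ to $\Gal(F/\mathcal{K})$ by Remark \ref{minR}(ii), which applies equally to normal subgroups because surjections send normal subgroups to normal subgroups. One then runs the argument above with $G=\Gal(F/\mathcal{K})$, and this is valid for every $\ell$.
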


 \section{Abelian varieties with multiplication}
\label{sec2}
 In this section we discuss analogues of results of Section \ref{one} when the endomorphism algebra of an abelian variety contains a given number field.

 \begin{sect}
Let $E$ be a number field and
$$i: E \hookrightarrow  \End_K^0(X) \subset \End^0(X)$$
be a $\Q$-algebra embedding
such that $i(1)=1_X$.  It is known \cite[Prop. 2 on p.
36]{Shimura2}) that the degree $[E:\Q]$ divides $2\dim(X)$. Let us put
$$d_{X,E}=\frac{2\dim(X)}{[E:\Q]}.$$
We write $\End^0(X,i)$ for the centralizer of $i(E)$ in $\End^0(X)$ and $\End_K^0(X,i)$ for the centralizer of $i(E)$ in $\End_K^0(X)$.  We have
$$i(E)\subset \End_K^0(X,i)\subset \End^0(X,i)\subset \End^0(X), \  \End_K^0(X,i)\subset \End_K^0(X)\subset  \End^0(X).$$
 We write $i(E)C_X$ for the compositum of $i(E)$ and $C_X$ in  $\End^0(X)$. In other words, $i(E)C_X$ is the image of the homomorphism of $\Q$-algebras
$$i\otimes \mathrm{id}_{C_X}: E\otimes_{\Q} C_X \to  \End^0(X),  \  e\otimes c \mapsto i(e)c.$$
Clearly $E\otimes_{\Q} C_X$ is a direct sum of fields, each of which contains a subfield isomorphic to $E$.
This implies that $i(E)C_X$  is a direct sum of fields, each of which contains a subfield isomorphic to $E$.
(In addition, each such a field contains a subfield isomorphic to $C_X$ if the latter is a field.)

 Clearly,  $i(E)C_X$ commutes with $i(E)$ and therefore lies in  $\End^0(X,i)$ and even in its center.

The next three assertions will be proven in in Section \ref{abelianMult}.

The first one
 is a corollary of standard facts about centralizers and bicentralizers of semisimple subalgebras of semisimple algebras. (See Theorem \ref{ssCenter} below.)

\begin{thm}
\label{SCenter}
$\End^0(X,i)$ is a finite-dimensional semisimple $\Q$-algebra, whose center coincides with $i(E)C_X$.
\end{thm}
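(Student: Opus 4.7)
The plan is to reduce the claim to the double-centralizer theorem for semisimple subalgebras of central simple algebras, applied componentwise to the simple-ideal decomposition of $\End^0(X)$ recalled in Section \ref{split0}.

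First I would invoke the decomposition $\End^0(X)=\bigoplus_{s\in\I(X)} D_s$ into simple ideals, with central idempotents $e_s$ summing to $1_X$ and centers $F_s:=Z(D_s)$, so that $C_X=\bigoplus_s F_s$. Each $e_s$ lies in $C_X$ and hence in $\End^0(X,i)$, so the Peirce decomposition along the $e_s$ yields
\[
\End^0(X,i) \;=\; \bigoplus_{s\in\I(X)} e_s\End^0(X,i) e_s \;=\; \bigoplus_{s\in\I(X)} C_{D_s}(E_s),
\]
where $E_s:=e_s\cdot i(E)\subset D_s$. Because $E$ is a field and the $\Q$-algebra map $e\mapsto e_s\cdot i(e)$ sends $1$ to the nonzero element $e_s$, each $E_s$ is a subfield of $D_s$ isomorphic to $E$.

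Next, for each $s$ I would examine the compositum $E_s\cdot F_s\subset D_s$. It is a homomorphic image of the étale $\Q$-algebra $E\otimes_{\Q} F_s$ (étale because $F_s/\Q$ is separable), hence a product of finitely many number fields; in particular it is a semisimple commutative $F_s$-subalgebra of $D_s$ that contains the center $F_s$. Because $F_s$ lies in the center of $D_s$, centralizing $E_s$ is the same as centralizing $E_s\cdot F_s$. The double-centralizer theorem for semisimple subalgebras of central simple algebras (exactly the content of Theorem \ref{ssCenter} in Section \ref{sec3}) then tells me that $C_{D_s}(E_s)=C_{D_s}(E_s\cdot F_s)$ is a semisimple $F_s$-algebra with center $Z(E_s\cdot F_s)=E_s\cdot F_s$, since $E_s\cdot F_s$ is commutative.

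Assembling over $s\in\I(X)$, $\End^0(X,i)$ is a finite direct sum of semisimple $\Q$-algebras, hence semisimple, and
\[
Z\bigl(\End^0(X,i)\bigr) \;=\; \bigoplus_{s\in\I(X)} E_s\cdot F_s \;=\; i(E)\cdot C_X,
\]
the last equality following from the componentwise description of the image of $i\otimes\mathrm{id}_{C_X}:E\otimes_{\Q}C_X\to\End^0(X)$ as $\bigoplus_s E_s\cdot F_s$. The only non-bookkeeping step is the semisimple (rather than simple) version of the double-centralizer theorem used in the middle paragraph; that is precisely what Section \ref{sec3} is devoted to, so once Theorem \ref{ssCenter} is in hand the Peirce argument above closes the proof.
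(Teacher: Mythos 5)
Your proof is correct and follows essentially the same route as the paper: the Peirce decomposition of $\End^0(X,i)$ along the central idempotents $e_s$ into centralizers $\mathcal{Z}_{D_s}(E_s)=\mathcal{Z}_{D_s}(E_sF_s)$ inside the simple components $D_s$, followed by the semisimple double-centralizer theorem (Theorem~\ref{ssCenter}) applied to the commutative semisimple subalgebra $E_sF_s$, is exactly the mechanism the paper runs through Theorems~\ref{inequality}, \ref{inequality2} and Section~\ref{split1}. If anything, your write-up is slightly more explicit about how the center $\bigoplus_s E_sF_s=i(E)C_X$ emerges in the non-simple case, where the paper's cited intermediate Theorems~\ref{inequality}(ii) and \ref{inequality2}(ii) only phrase the center description under the hypothesis that $i(E)C_X$ is a field.
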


The next two statements deal with the $E$-dimension of $\End^0(X,i)$.

\begin{thm}
\label{boundDim}
Let us consider $\End^0(X,i)$ as an $E$-algebra. Then the $E$-algebra $\End^0(X,i)$ is semisimple and
$$\dim_E(\End^0(X,i)) \le\left(\frac{2\dim(X)}{[E:\Q]}\right)^2.$$
\end{thm}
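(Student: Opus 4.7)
The semisimplicity of $\End^0(X,i)$ over $\Q$ is part of Theorem~\ref{SCenter}, and since the Jacobson radical is the same whether one views it as a $\Q$- or $E$-algebra, it is also semisimple as an $E$-algebra. So I focus on the dimension bound.

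My plan is to pass to the rational Tate module $V_\ell := V_\ell(X) = T_\ell(X) \otimes_{\Z_\ell} \Q_\ell$ for any prime $\ell \ne \fchar(K)$; this is a $\Q_\ell$-vector space of dimension $2\dim(X)$, on which $E \otimes_\Q \Q_\ell$ acts via $i$. Faithfulness of the $\ell$-adic representation of the endomorphism algebra gives
$$\End^0(X) \otimes_\Q \Q_\ell \;\hookrightarrow\; \End_{\Q_\ell}(V_\ell),$$
and since centralizers commute with flat base change, the restriction to $\End^0(X,i)$ lands in the centralizer of $E \otimes_\Q \Q_\ell$:
$$\End^0(X,i) \otimes_\Q \Q_\ell \;\hookrightarrow\; \End_{E \otimes_\Q \Q_\ell}(V_\ell). \quad (\ast)$$

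The key input is that $V_\ell$ is a \emph{free} $E \otimes_\Q \Q_\ell$-module (necessarily of rank $d_{X,E}$). I would deduce this from the classical Weil--Mumford theorem that for any $u \in \End^0(X)$ the characteristic polynomial of $u$ on $V_\ell$ lies in $\Q[T]$ and is independent of $\ell$. Applied to a primitive element $e \in E$, the polynomial factors over $\overline{\Q_\ell}$ as $\prod_\sigma (T - \sigma(e))^{m_\sigma}$, where $\sigma$ runs over embeddings $E \hookrightarrow \overline{\Q} \subset \overline{\Q_\ell}$. Rationality forces the multiplicities $m_\sigma$ to be constant on $\Gal(\overline{\Q}/\Q)$-orbits; since $E$ is a field, $\Gal(\overline{\Q}/\Q)$ acts transitively on its embeddings, so all $m_\sigma$ equal a single integer $m$. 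The identity $m[E:\Q] = 2\dim(X)$ then gives $m = d_{X,E}$. Decomposing $V_\ell = \bigoplus_{\lambda \mid \ell} V_\lambda$ along the primes $\lambda$ of $E$ above $\ell$, this says $\dim_{E_\lambda} V_\lambda = d_{X,E}$ for every $\lambda$, which is the desired freeness.

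Granted freeness, $\End_{E \otimes_\Q \Q_\ell}(V_\ell) \cong \mathrm{Mat}_{d_{X,E}}(E \otimes_\Q \Q_\ell)$, of rank $d_{X,E}^2$ as an $E \otimes_\Q \Q_\ell$-module. Since $\End^0(X,i) \otimes_\Q \Q_\ell$ is free of rank $\dim_E \End^0(X,i)$ over $E \otimes_\Q \Q_\ell$ (base change of a finite-dimensional $E$-vector space), the embedding $(\ast)$ forces $\dim_E \End^0(X,i) \le d_{X,E}^2$. The main obstacle is the freeness step; everything else is routine bookkeeping with ranks. It is crucial here that $E$ be a field rather than merely an \'etale $\Q$-algebra, since this is what makes the Galois action on the embeddings transitive and hence the multiplicities $m_\sigma$ uniform; without this, Cauchy--Schwarz actually goes the wrong way.
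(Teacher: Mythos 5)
Your proof is correct, but it takes a genuinely different route from the paper. The paper reduces to the isotypic case via the decomposition $X\sim\prod_s X_s$ (Theorems \ref{inequality} and \ref{inequality2}), then applies pure algebra about centralizers of subfields in central simple algebras (Theorem \ref{mainalg}(v)), with the crucial numerical input being Albert's classification bound $[C_Y:\Q]\,d_{\End^0(Y)}\le 2\dim(Y)$ (Remark \ref{CM}(i)); the inequality $\sum a_s^2 \le (\sum a_s)^2$ handles the non-isotypic case. You instead go through the rational $\ell$-adic Tate module $V_\ell$: you use Weil's rationality and $\ell$-independence of characteristic polynomials to establish that $V_\ell$ is a free $E\otimes_\Q\Q_\ell$-module of rank $d_{X,E}$, then embed $\End^0(X,i)\otimes\Q_\ell$ into $\End_{E\otimes\Q_\ell}(V_\ell)\cong \mathrm{Mat}_{d_{X,E}}(E\otimes\Q_\ell)$ and read off the rank bound componentwise over the product of fields $E\otimes\Q_\ell$. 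Both are standard toolkits; yours is shorter for this one statement because it never needs to split off the simple factors or invoke Albert, whereas the paper's algebraic route (Theorem \ref{mainalg}(vi), Theorem \ref{inequality}(iv)) is designed to also characterize exactly when equality holds — which is what Theorem \ref{bigDim} needs next, and which your Tate-module injection does not by itself deliver without further argument about when $(\ast)$ is surjective. One small remark: the freeness you derive is also essentially the $\Q_\ell$-version of the fact the paper quotes from Ribet (that $X_\lambda$ has $k(\lambda)$-dimension $d_{X,E}$), so the two approaches are using closely related inputs dressed differently.
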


\begin{thm}
\label{bigDim}
Suppose that
$$\dim_E(\End^0(X,i)) =\left(\frac{2\dim(X)}{[E:\Q]}\right)^2.$$
Then $E$ contains $C_X$ and therefore $C_X$ is a field. In addition,  $\End^0(X,i)$  is a central simple $E$-algebra and $X$ is an abelian variety of CM type over $K_a$. In particular, $X$ is isogenous over $K_a$ to a self-product of an absolutely simple abelian variety of CM type over $K_a$.
\end{thm}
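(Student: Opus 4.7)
The plan is to exploit the natural faithful representation of $\End^0(X)$ on a $2\dim(X)$-dimensional $\Q$-vector space $V$ (say $V=V_\ell(X)\otimes_{\Q_\ell}\Q$, or $V=H_1(X(\C),\Q)$ when $K\subset\C$), which via $i$ becomes a $d_{X,E}$-dimensional $E$-module.

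First I would note that since every element of $\End^0(X,i)$ commutes with $i(E)$, the action of $\End^0(X,i)$ on $V$ is $E$-linear, so we get an embedding of $E$-algebras
$$\End^0(X,i)\hookrightarrow \End_E(V)\cong M_{d_{X,E}}(E).$$
The right-hand side has $E$-dimension $d_{X,E}^2$, so the hypothesis $\dim_E\End^0(X,i)=d_{X,E}^2$ forces this embedding to be an isomorphism. In particular, $\End^0(X,i)$ is a central simple $E$-algebra whose center is exactly $i(E)$.

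Next I would invoke Theorem \ref{SCenter}, which identifies the center of $\End^0(X,i)$ with $i(E)C_X$. Combining the two descriptions of the center yields $i(E)C_X=i(E)$, hence $C_X\subset i(E)$. Since $C_X$ is a $\Q$-subalgebra of the field $i(E)\cong E$, it is itself a subfield of $E$, so it is a field; in particular $\End^0(X)$ is a simple $\Q$-algebra, and by \ref{split0} $X$ is $K_a$-isogenous to a self-power $Z^r$ of some absolutely simple abelian variety $Z$ over $K_a$.

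Finally, for the CM assertion I would use the matrix description: $M_{d_{X,E}}(E)$ contains the diagonal subalgebra $E^{d_{X,E}}$, a commutative semisimple $\Q$-subalgebra of $\Q$-dimension $d_{X,E}\cdot[E:\Q]=2\dim(X)$. Pulling it back through the isomorphism embeds such a subalgebra into $\End^0(X)$, showing $X$ is of CM type over $K_a$; since CM status passes to isogeny factors (decomposing the CM subalgebra through $\End^0(X)=M_r(\End^0(Z))$), the absolutely simple $Z$ is also of CM type. The main obstacle will be the first step: verifying that the embedding $\End^0(X,i)\hookrightarrow\End_E(V)$ is dimension-saturated precisely when equality holds in the bound from Theorem \ref{boundDim}. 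This reduces to the faithfulness of the $\End^0(X)$-action on $V$ together with the standard identification of the centralizer of $i(E)$ inside $\End_\Q(V)$ with $\End_E(V)$, which is the equality case of the usual double-centralizer estimate.
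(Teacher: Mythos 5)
Your argument is essentially correct in characteristic $0$, but it breaks down in positive characteristic, and the theorem is stated (and used — see the Example immediately following it, which explicitly invokes $\fchar(K_a)>0$) without any characteristic restriction. The step that fails is the very first one, which you yourself flag as the main obstacle: the existence of a faithful $\Q$-linear representation of $\End^0(X)$ on a $2\dim(X)$-dimensional $\Q$-vector space $V$. In characteristic $0$ one can take $V=H_1(X(\C),\Q)$ after descending to a subfield embeddable in $\C$, and your argument then does go through, even yielding the sharper conclusion $\End^0(X,i)\cong \M_{d_{X,E}}(E)$. In positive characteristic no such $V$ exists: for a supersingular elliptic curve $X$, $\End^0(X)$ is the quaternion division algebra $B_{p,\infty}$, which has no faithful $\Q$-representation of dimension $2=2\dim(X)$, precisely because $B_{p,\infty}\not\cong \M_2(\Q)$. (The notation $V_\ell(X)\otimes_{\Q_\ell}\Q$ is not meaningful, since $\Q$ is not a $\Q_\ell$-algebra; if you use $V_\ell(X)$ over $\Q_\ell$ instead, then $E\otimes_\Q\Q_\ell$ is a product of fields rather than a field, $\End_{E\otimes\Q_\ell}(V_\ell)\cong\prod_{v\mid\ell}\M_{d_{X,E}}(E_v)$, and although this still recovers the dimension bound and the identification of the center, your diagonal-subalgebra construction of the CM field happens only after $\otimes\,\Q_\ell$ and does not descend to $\Q$ automatically; one must instead invoke the existence of a maximal subfield of degree $d_{X,E}$ inside the central simple $E$-algebra $\End^0(X,i)$.)

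The paper avoids this problem altogether: the proof of Theorem \ref{bigDim} (together with Theorems \ref{SCenter} and \ref{boundDim}) goes through Theorems \ref{inequality} and \ref{inequality2} and Remark \ref{CM}. These are established by purely algebraic means — the double-centralizer facts of Theorem \ref{ssCenter}/\ref{herst} and the rank computation of Theorem \ref{mainalg}, where the Lie-algebra rank is taken over $k_0=\Q$, which has characteristic $0$ regardless of $\fchar(K)$ — and the CM field is produced explicitly in Remark \ref{CM}(iii) by tensoring a maximal subfield of the division algebra $\End^0(Z)$ with a linearly disjoint degree-$r$ extension. That construction requires no auxiliary representation and therefore works in every characteristic. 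To salvage your approach in full generality you would need to replace the rational representation $V$ by the $\ell$-adic argument sketched above, and then separately produce the $2\dim(X)$-dimensional subfield from the abstract structure of $\End^0(X,i)$ as a central simple $E$-algebra of dimension $d_{X,E}^2$.
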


\begin{ex}
Let $E=\Q$. Then $\End^0(X,i)=\End^0(X)$. We have
$$\dim_{\Q}(\End^0(X))\le (2g)^2;$$
 the equalty holds if and only if $\fchar(K_a)>0$ and $X$ is isogenous over $K_a$ to a self-product of a supersingular elliptic curve  \cite{ZarhinMRL}.
\end{ex}

\end{sect}

\begin{sect}
\label{OCE}
Let $\OC$ be the ring of integers in $E$.  If $\lambda$ is a maximal ideal in $\OC$ then we write $k(\lambda)$ for its (finite) residue field $\OC/\lambda$. For all but finitely many $\lambda$
 $$\fchar(k(\lambda))\ne \fchar(K).$$
Let us assume that
$$i(\OC)\subset \End_K(X).$$
Then the center of $\End_K(X,i)$ contains $i(\OC)$ and  $\End_K(X,i)$ becomes an $i(\OC)\cong \OC$-algebra. Notice that $\OC$ is a Dedekind ring and the $\OC$-module $\End_K(X)$ is finitely generated torsion-free. Therefore
$\End_K(X)$
is isomorphic (as an $\OC$-module) to a direct sum  of finitely many nonzero  ideals of  $\OC$.
Let us assume that $\fchar(k(\lambda))\ne \fchar(K)$ and consider
$$X_{\lambda}=\{x \in X(K_a)\mid i(u) x=0 \ \forall u \in \lambda \subset \OC\} \subset X(K_a).$$
It is known \cite{Ribet2} that $X_{\lambda}$ is a $\Gal(K)$-invariant finite subgroup of $X(K^{\sep})$ that carries the natural structure of $d_{X,E}$-dimensional vector space over $k(\lambda)$. The Galois action on
$X_{\lambda}$ induces the continuous group homomorphism
$$\bar{\rho}_{\lambda,X,K}: \Gal(K) \to \Aut_{k(\lambda)}(X_{\lambda}), $$
whose image we denote by $\tilde{G}_{\lambda,X,K}$. As above (in the case of $E=\Q, \OC=\Z, \lambda=\ell\Z)$), we get the surjective continuous group homomorphism
$$\bar{\rho}_{\lambda,X}=\bar{\rho}_{\lambda,X,K}: \Gal(K)\twoheadrightarrow \tilde{G}_{\lambda,X,K} \subset \Aut_{k(\lambda)}(X_{\lambda}).$$
If $K^{\prime} \subset K^{\sep}$ is an overfield of $K$ then $\bar{\rho}_{\lambda,X,K^{\prime}}$ coincides with the restriction of $\bar{\rho}_{\lambda,X,K}$ to $\Gal(K^{\prime})\subset \Gal(K)$.

Let $K(X_{\lambda})\subset K^{\sep}$ be the field of definition of all points of $X_{\lambda}$. Then the subgroup $\Gal(K(X_{\lambda}))$ of $\Gal(K)$ coincides with
$\ker(\bar{\rho}_{\lambda,X,K})$,  $K(X_{\lambda})/K$ is a finite Galois extension and  $\bar{\rho}_{\lambda,X,K}$ induces the canonical isomorphism
$$\Gal(K(X_{\lambda})/K)=\Gal(K)/\Gal(K(X_{\lambda})) \cong \tilde{G}_{\lambda,X,K}\subset \Aut_{k(\lambda)}(X_{\lambda}).$$
\end{sect}

\begin{sect}
\label{minS}
We will need the following result related to the notion of  minimal covers of groups
 \cite{FT}.

\begin{lem}
\label{maximal}
Let $F/K$ be a finite Galois field extension  and let $L/K$ be a Galois field extension such that
$$K\subset L \subset F.$$
Then there exists an overfield $\mathcal{K}$ of $K$  that is a subfield of $F$ and enjoys the following properties.
\begin{itemize}
\item[(i)]
$K\subset \mathcal{K} \subset F$.
\item[(ii)]
Let $\phi_{\mathcal{K},L}$ be the restriction of the natural surjective group homomorphism $\Gal(F/K) \twoheadrightarrow \Gal(L/K)$
to $\Gal(F/\mathcal{K}) \subset \Gal(F/K)$. Then the group homomorphism
$\phi_{\mathcal{K},L}: \Gal(F/\mathcal{K}) \to \Gal(L/K)$
is surjective.
\item[(iii)]
$\mathcal{K}$ is maximal among the fields that satisfy (i) and (ii).
\end{itemize}
\end{lem}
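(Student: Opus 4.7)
The plan is to translate the problem entirely into finite group theory via the Galois correspondence, where the assertion becomes the familiar fact that a normal subgroup always admits a minimal supplement in a finite group.

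Set $G = \Gal(F/K)$ and $N = \Gal(F/L)$. Since $L/K$ is Galois, $N$ is a normal subgroup of $G$, and the natural surjection $\Gal(F/K) \twoheadrightarrow \Gal(L/K)$ coincides with the quotient map $\pi \colon G \twoheadrightarrow G/N$ (after identifying $\Gal(L/K)$ with $G/N$). By the fundamental theorem of Galois theory applied to $F/K$, overfields $\mathcal{K}$ satisfying (i) correspond bijectively and in an inclusion-reversing manner to subgroups $H = \Gal(F/\mathcal{K}) \le G$. Moreover, the restriction $\phi_{\mathcal{K},L}$ is exactly $\pi|_H \colon H \to G/N$, so condition (ii) is equivalent to the set-theoretic equality $HN = G$ (i.e., $H$ is a \emph{supplement} of $N$ in $G$).

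Under this dictionary, asking that $\mathcal{K}$ be maximal among fields satisfying (i) and (ii) is the same as asking that $H$ be minimal among subgroups of $G$ satisfying $HN = G$. The collection
\[
\mathcal{S} = \{\,H \le G : HN = G\,\}
\]
is nonempty since $G \in \mathcal{S}$, and is finite because $G$ is finite. Hence $\mathcal{S}$, partially ordered by inclusion, possesses a minimal element $H_0$. We then set $\mathcal{K} := F^{H_0}$. By construction $\mathcal{K}$ satisfies (i) and (ii), and its maximality (iii) follows immediately from the minimality of $H_0$ and the inclusion-reversing Galois correspondence: any strictly larger $\mathcal{K}' \subset F$ satisfying (i) and (ii) would give $H' = \Gal(F/\mathcal{K}') \subsetneq H_0$ with $H'N = G$, contradicting minimality.

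There is essentially no obstacle here beyond the translation: the existence of a minimal supplement to a normal subgroup in a finite group is automatic. The only point worth flagging is that one should not expect $H_0$ (equivalently $\mathcal{K}$) to be unique, and one should not confuse a minimal supplement with a complement — the intersection $H_0 \cap N$ may well be nontrivial, which is precisely why the notion of minimal cover (in the sense of \cite{FT}) plays a role in the applications rather than the more restrictive notion of splitting.
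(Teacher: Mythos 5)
Your proof is correct and is essentially the same argument as the paper's: both observe that $\mathcal{K}=K$ (equivalently $H=G$) trivially satisfies (i) and (ii), and then invoke finiteness of the set of intermediate fields (equivalently, of subgroups of the finite group $\Gal(F/K)$) to extract a maximal (equivalently, minimal) element. Your translation into the language of supplements $HN=G$ is a clean reformulation, but it is a paraphrase rather than a different route.
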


\begin{proof}
Clearly, $\mathcal{K}=K$ satisfies (i) and (ii). The existence of maximal $\mathcal{K}$ follows from the finiteness of the set of intermediate fields that satisfy (i).
\end{proof}

\begin{rem}
\label{minR}
\begin{itemize}
\item[(i)]
The maximality of $\mathcal{K}$  in Lemma \ref{maximal} means that  surjective $\phi_{\mathcal{K},L}: \Gal(F/\mathcal{K}) \to \Gal(L/K)$ is a {\sl minimal cover} in a sense of \cite{FT},
i.e., if $H$ is a subgroup of $\Gal(F/\mathcal{K}) $ that maps {\sl onto}  $\Gal(L/K)$ then $H=\Gal(F/\mathcal{K})$. Indeed, the  subfield  $F^{H}$ of $F$ enjoys the properties (i--ii) and  contains
$F^{\Gal(F/\mathcal{K})}=\mathcal{K}$. In light of the maximality of $\mathcal{K}$, we have $F^{H}=\mathcal{K}$ and therefore  $\Gal(F/\mathcal{K}) =H$. (Such a $\mathcal{K}$ is not necessarily unique.)
\item[ii)]
Suppose that $H$ is a subgroup in $\Gal(F/\mathcal{K}) $ of index $d>1$. By (i),  the index $d^{\prime}:=(\Gal(L/K):\phi_{\mathcal{K},L}(H))>1.$  I claim that
$d^{\prime}$ divides $d$. Indeed, if $\phi=\phi_{\mathcal{K},L}$ then
$$d=\frac{|\Gal(F/\mathcal{K})|}{|H|}=\frac{|\ker(\phi)|\cdot |\Gal(L/K)|}{|\ker(\phi)\bigcap H| |\phi(H)|}=$$
$$\frac{|\ker(\phi)|}{|\ker(\phi)\bigcap H|} \cdot \frac{|\Gal(L/K)|}{ |\phi(H)|}=
\frac{|\ker(\phi)|}{|\ker(\phi)\bigcap H|} \cdot d^{\prime}.$$
Since $\ker(\phi)\bigcap H$ is a subgroup of $\ker(\phi)$, Lagrange's theorem tells us that $|\ker(\phi)\bigcap H|$ divides $|\ker(\phi)|$ and therefore $d^{\prime}$ divides $d$.

This implies that if $d>1$ is an integer such that $\Gal(L/K)$ does {\sl not} contain a proper subgroup of index dividing $d$  then
$ \Gal(F/\mathcal{K}) $ also does {\sl not} contain a proper subgroup of index dividing $d$.
\end{itemize}
\end{rem}

\begin{rem}
\label{disjoint}
Let $K,L,F$ be as in Lemma \ref{maximal}.
Suppose that $\mathcal{T}$ is a field that is an overfield of $K$ and a subfield of $F$.
Since the field extension  $L/K$ is Galois, the field extension $\mathcal{T}L/\mathcal{T}$ is also Galois. Hereafter $\mathcal{T}L$ is   the compositum of
 $\mathcal{T}$ and $L$, which is a subfield of $F$ with
 \begin{equation}
 \label{compDEG}
  [\mathcal{T}L:K] \le [\mathcal{T}:K][L:K];
  \end{equation}
  the equality holds if and only if  $\mathcal{T}$ and $L$ are {\bf  linearly disjoint} over $K$.

The assertion that  $\mathcal{T}$  enjoys the property (ii) of Lemma \ref{maximal} means that  $\mathcal{T}$ and $L$ are {\bf  linearly disjoint} over $K$.
Indeed, suppose that  $\mathcal{T}$ and $L$ are {\bf  linearly disjoint} over $K$. Then
$$[\mathcal{T}L:K]=[\mathcal{T}:K][L:K].$$
 Since
$$[\mathcal{T}L:K]=[\mathcal{T}L:\mathcal{T}] [\mathcal{T}:K],$$
we conclude that $[\mathcal{T}L:\mathcal{T}]=[L:K]$ and therefore the natural injective group homomorphism (``restriction'' to $L$)
$$\mathrm{res}_L: \Gal(\mathcal{T}L/\mathcal{T}) \to \Gal(L/K)$$ is a map between two finite groups of the same order $[L:K]$ and therefore
is an isomorphism.  Notice that $\mathrm{res}_L$ coincides with the restriction to  $\Gal(\mathcal{T}L/\mathcal{T})\subset \Gal(F/K)$ of
$\phi_{\mathcal{T},L}: \Gal(F/\mathcal{T}) \to \Gal(L/K)$. This implies that $\phi_{\mathcal{T},L}$ is surjective, i.e., $\mathcal{T}$  enjoys the property (ii) of Lemma \ref{maximal}.

Conversely, let us assume that $\phi_{\mathcal{T},L}$ is {\sl surjective}. Notice that $\phi_{\mathcal{T},L}$ factors through
$\Gal(F/\mathcal{T}) \twoheadrightarrow \Gal(\mathcal{T}L/\mathcal{T})$ and therefore the surjectiveness of $\phi_{\mathcal{T},L}$  implies (actually, is equivalent to)
the surjectiveness of
$$\mathrm{res}_L: \Gal(\mathcal{T}L/\mathcal{T})\to \Gal(L/K),$$
which, in turn, implies the inequality $[\mathcal{T}L:\mathcal{T}]\ge [L:K]$. This implies that
$$[\mathcal{T}L:K]=[\mathcal{T}L:[\mathcal{T}] [\mathcal{T}:K] \ge [L:K]  [\mathcal{T}:K] ,$$
which tells us in light of \eqref{compDEG} that
$$[\mathcal{T}L:K]= [L:K]  [\mathcal{T}:K] ,$$
i.e., $\mathcal{T}$ and $L$ are   linearly disjoint over $K$.

This means that  $\mathcal{T}$ enjoys the properties (i)-(iii)  of Lemma \ref{maximal} if and only if it is  {\sl maximal} among overfields of $K$ that lie in $F$ and are linearly disjoint with $L$ over $K$.
\end{rem}

\begin{rem}
\label{minLambda}
Let us apply Lemma \ref{maximal} and Remark \ref{minR} to $L=K(X_{\lambda})$ and choose as $F\subset K^{\sep}$ any finite Galois extension of $K$ that contains both $K(X_{\lambda})$ and $\mathcal{F}_{X,K}$;
in particular,  all endomorphisms of $X$ are defined over $F$.  We have
$$\Gal(L/K)=\Gal(K(X_{\lambda})/K)=\tilde{G}_{\lambda,X,K}.$$
Clearly,
$\bar{\rho}_{\lambda,X,K}$ factors through
$\Gal(K)/\Gal(F)=\Gal(F/K)$, and for each overfield $K^{\prime}\subset F$ of $K$ the image
$$\tilde{G}_{\lambda,X,K^{\prime}}=\bar{\rho}_{\lambda,X,K}(\Gal(K^{\prime}))$$
coincides with the image of
$$\Gal(F/K^{\prime}) \to \Gal(K(X_{\lambda})/K^{\prime})=\tilde{G}_{\lambda,X,K^{\prime}}\subset\tilde{G}_{\lambda,X,K}\subset \Aut_{k(\lambda)}(X_{\lambda}).$$
Now if we take as $K^{\prime}$  a field  $\mathcal{K}$  that enjoys the properties (i)-(iii) of Lemma \ref{maximal} then
$$\tilde{G}_{\lambda,X,\mathcal{K}}=\tilde{G}_{\lambda,X,K}\subset \Aut_{k(\lambda)}(X_{\lambda})$$
 and the surjective group homomorphism
$$\phi_{\mathcal{K}}:\Gal(F/\mathcal{K}) \to \Gal(L/K)=\tilde{G}_{\lambda,X,K}$$
is a {\sl minimal cover}. In particular,
 $$\End_{\tilde{G}_{\lambda,X,K}}(X_{\lambda})=\End_{\tilde{G}_{\lambda,X,\mathcal{K}}}(X_{\lambda}).$$
  In addition, if $d>1$ is a positive integer such that $\tilde{G}_{\lambda,X,K}$ does {\sl not} contain a proper subgroup, whose index divides $d$
  then $\Gal(F/\mathcal{K})$ also does {\sl not} contain a proper subgroup,  whose index divides $d$. Notice also that since all the endomorphisms of $X$ are defined over $F$, i,e.,
   $\kappa_{X,K}$  kills $\Gal(F)$,
  there is the natural homomorphism
  $$\Gal(F/K)=\Gal(K)/\Gal(F) \to \Aut(\End(X,i))$$
  induced by $\kappa_{X,K}$  such that
  $$\End_{K^{\prime}}(X,i)=\End(X,i)^{\Gal(F/K^{\prime})}$$
  for all fields $K^{\prime}$ with $K\subset K^{\prime}\subset F$, including $K^{\prime}=\mathcal{K}$ or $K$.
\end{rem}

\end{sect}


\begin{lem}
 \label{moriO}
 (\cite[Lemma 3.8 on p. 700]{ZarhinMZ2}])
 If the centralizer
 $$\End_{\tilde{G}_{\lambda,X,K}}(X_{\lambda})=k(\lambda)$$
 then
 $\End_K(X,i)=i(\OC).$
 \end{lem}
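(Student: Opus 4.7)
The plan mimics the proof of Lemma \ref{moriZ}, with $\OC$ in place of $\Z$ and the maximal ideal $\lambda$ in place of $\ell\Z$. Put $M:=\End_K(X,i)$. Because $i(\OC)$ lies in the center of $M$, this ring is naturally an $\OC$-algebra; since $M\subset\End_K(X)$ is finitely generated and torsion-free as a $\Z$-module, it is also finitely generated and torsion-free over $\OC$, and hence projective of some finite rank $r$ over the Dedekind domain $\OC$.

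Step 1 is to construct a surjective $\OC$-algebra homomorphism $\alpha:M\twoheadrightarrow k(\lambda)$. Each $u\in M$ commutes with $i(\OC)$, so it preserves $X_{\lambda}$ and acts on it $k(\lambda)$-linearly; since $u\in\End_K(X)$, this action is also $\Gal(K)$-equivariant. The image therefore lies in $\End_{\tilde{G}_{\lambda,X,K}}(X_{\lambda})$, which equals $k(\lambda)$ by hypothesis, so one obtains the desired $\alpha:M\to k(\lambda)$. The composition $\OC\xrightarrow{i}M\xrightarrow{\alpha}k(\lambda)$ is the canonical reduction map, which is surjective, and hence so is $\alpha$.

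Step 2, the main technical point, is to verify that $\ker(\alpha)=\lambda M$. The inclusion $\lambda M\subset\ker(\alpha)$ is immediate. For the reverse, pass to the $\lambda$-adic Tate module $T_{\lambda}(X)$, a free module of rank $d_{X,E}$ over the $\lambda$-adic completion $\hat{\OC}_{\lambda}$. The natural map $M\otimes_{\OC}\hat{\OC}_{\lambda}\to\End_{\hat{\OC}_{\lambda}}(T_{\lambda}(X))$ is injective by the standard faithfulness of the $\lambda$-adic representation of the endomorphism ring. Now $u\in M$ annihilates $X_{\lambda}=T_{\lambda}(X)/\lambda T_{\lambda}(X)$ precisely when its image lies in $\lambda\cdot\End_{\hat{\OC}_{\lambda}}(T_{\lambda}(X))$; intersecting this with $M$ inside $M\otimes_{\OC}\hat{\OC}_{\lambda}$ recovers $\lambda M$, because the natural map $M/\lambda M\to(M\otimes_{\OC}\hat{\OC}_{\lambda})/\lambda(M\otimes_{\OC}\hat{\OC}_{\lambda})$ is an isomorphism.

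Consequently $M/\lambda M\cong k(\lambda)$, which has $k(\lambda)$-dimension $1$; but $M$ is $\OC$-projective of rank $r$, so $r=1$. Thus $M\otimes_{\OC}E$ is a one-dimensional $E$-algebra, necessarily equal to $i(E)=E$, and $M$ embeds into $E$ as a fractional $\OC$-ideal containing $i(\OC)=\OC$. Finally, since $M$ is also a subring of $E$, every $m\in M$ has all its powers in $M$; as $M$ is finitely generated over $\OC$, the ring $\OC[m]$ is too, so $m$ is integral over $\OC$ and therefore lies in $\OC$. Hence $M=i(\OC)$, as required. The main obstacle is Step 2, which pins $\rk_{\OC}(M)$ down to $1$, well below the a priori bound $d_{X,E}^{2}$ supplied by Theorem \ref{boundDim}; everything else is routine.
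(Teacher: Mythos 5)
Your overall strategy is the right one, and Steps~1 and~3 (the surjection $\alpha\colon M\twoheadrightarrow k(\lambda)$ and the passage from rank~$1$ to $M=i(\OC)$ via integrality) are fine. But the justification you give for the crux of Step~2 does not actually prove what you need. You correctly observe that $u\in M$ kills $X_{\lambda}$ iff the image of $u$ in $\End_{\hat{\OC}_{\lambda}}(T_{\lambda}(X))$ lies in $\lambda\End_{\hat{\OC}_{\lambda}}(T_{\lambda}(X))$. You then assert that intersecting this with $M$ recovers $\lambda M$, citing the isomorphism $M/\lambda M\xrightarrow{\ \sim\ }(M\otimes_{\OC}\hat{\OC}_{\lambda})/\lambda(M\otimes_{\OC}\hat{\OC}_{\lambda})$. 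That isomorphism only shows $M\cap\lambda(M\otimes_{\OC}\hat{\OC}_{\lambda})=\lambda M$. What you actually need is the stronger statement
$$\bigl(M\otimes_{\OC}\hat{\OC}_{\lambda}\bigr)\cap\lambda\End_{\hat{\OC}_{\lambda}}(T_{\lambda}(X))=\lambda\bigl(M\otimes_{\OC}\hat{\OC}_{\lambda}\bigr),$$
i.e.\ that the inclusion $M\otimes_{\OC}\hat{\OC}_{\lambda}\hookrightarrow\End_{\hat{\OC}_{\lambda}}(T_{\lambda}(X))$ is $\lambda$-saturated (equivalently, has torsion-free cokernel). This is not automatic for an arbitrary sublattice of a free module over a DVR, and the faithfulness (injectivity) you invoke does not give it. As written, your argument would let $\ker(\alpha)$ be strictly larger than $\lambda M$.

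The fact is true, but it requires abelian-variety input, not just module theory. The standard route is the one behind Ribet--Mori: if $u\in\End(X)$ kills $X_{\lambda}$, choose $a\in\lambda\setminus\lambda^{2}$, write $(a)=\lambda\mathfrak{b}$ with $\mathfrak{b}$ coprime to $\lambda$, and pick $b\in\mathfrak{b}\setminus\lambda$. Then $i(b)u$ kills $X[a]=X_{\lambda}\times X_{\mathfrak{b}}$, hence factors through the isogeny $i(a)\colon X\to X$, giving $i(b)u=i(a)w$ with $w\in\End(X)$; uniqueness of $w$ forces $w\in\End_{K}(X,i)=M$ since $i(a),i(b),u$ are all $\Gal(K)$-equivariant and commute with $i(\OC)$. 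Localizing at $\lambda$ (where $b$ is a unit and $a$ has valuation $1$) yields $u\in\lambda M_{(\lambda)}$, so $\ker(\alpha)_{(\lambda)}=\lambda M_{(\lambda)}$ and hence $\rk_{\OC}M=1$. If you prefer your Tate-module phrasing, you must supply the equivalent saturation statement --- that an element of $\End_{K}(X,i)$ divisible by $\lambda$ in $\End_{\hat\OC_{\lambda}}(T_{\lambda}(X))$ is already divisible by $\lambda$ in $M\otimes_{\OC}\hat\OC_{\lambda}$ --- and the proof of that statement is exactly the isogeny-factorization argument you omitted.
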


 Since the natural $\Q$-algebra homomorphisms
 $$\OC\otimes \Q \to E, \ i(\OC)\otimes\Q \to i(E)$$
 are obvious isomorphisms, Lemma \ref{moriO} implies the following assertion.

 \begin{cor}
 \label{moriE}
 If the centralizer
 $$\End_{\tilde{G}_{\lambda,X,K}}(X_{\lambda})=k(\lambda)$$
 then
 $\End_K^{0}(X,i)=i(E).$
 \end{cor}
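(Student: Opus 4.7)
The plan is to reduce Corollary \ref{moriE} directly to Lemma \ref{moriO} by tensoring with $\Q$. Since $i(\OC)\subset \End_K(X)$ and $\OC\otimes\Q\xrightarrow{\sim} E$, the image $i(E)$ lies in $\End_K^0(X)=\End_K(X)\otimes\Q$, and an element of $\End_K^0(X)$ commutes with all of $i(E)$ if and only if it commutes with $i(\OC)$ (because $i(E)=i(\OC)\otimes\Q$ and the commutation relation is $\Q$-linear). Hence $\End_K^0(X,i)$, defined as the centralizer of $i(E)$ in $\End_K^0(X)$, coincides with the centralizer of $i(\OC)$ in $\End_K(X)\otimes\Q$.

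The next step is to identify this centralizer with $\End_K(X,i)\otimes\Q$. Because $\End_K(X)$ is a finitely generated torsion-free $\Z$-module, the natural map $\End_K(X)\hookrightarrow \End_K(X)\otimes\Q$ is injective and tensoring with $\Q$ is exact. Given any $u\in \End_K(X)\otimes\Q$ commuting with every element of $i(\OC)$, one can write $u=v/N$ for some $v\in \End_K(X)$ and positive integer $N$; then $v$ itself commutes with $i(\OC)$ inside $\End_K(X)$, so $v\in \End_K(X,i)$ and $u\in \End_K(X,i)\otimes\Q$. Conversely $\End_K(X,i)\otimes\Q$ is obviously contained in the centralizer. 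Therefore
\[
\End_K^0(X,i) \;=\; \End_K(X,i)\otimes_{\Z}\Q.
\]

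Now apply Lemma \ref{moriO}: under the standing hypothesis $\End_{\tilde{G}_{\lambda,X,K}}(X_{\lambda})=k(\lambda)$ one has $\End_K(X,i)=i(\OC)$. Tensoring with $\Q$ yields
\[
\End_K^0(X,i) \;=\; i(\OC)\otimes_{\Z}\Q \;=\; i(\OC\otimes_{\Z}\Q) \;=\; i(E),
\]
which is the asserted equality.

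There is essentially no obstacle here: the only mildly nontrivial ingredient is the identification of the rational centralizer with the rational tensor of the integral centralizer, and this is a routine consequence of flatness of $\Q$ over $\Z$ together with the fact that $\End_K(X)$ is torsion-free. The content of the statement is packaged in Lemma \ref{moriO}; Corollary \ref{moriE} is merely its $\Q$-linearization.
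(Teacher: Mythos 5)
Your proof is correct and follows exactly the paper's route: apply Lemma \ref{moriO} to get $\End_K(X,i)=i(\OC)$ and then tensor with $\Q$. The paper states this as a one-line remark (the isomorphisms $\OC\otimes\Q\cong E$ and $i(\OC)\otimes\Q\cong i(E)$); you have simply spelled out the intermediate identification $\End_K^0(X,i)=\End_K(X,i)\otimes\Q$, which is implicit there.
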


 \begin{thm}
 \label{endoE}
 Let us assume that
   $$\End_{\tilde{G}_{\lambda,X,K}}(X_{\lambda})=k(\lambda).$$
 Suppose that $\tilde{G}_{\lambda,X,K}$   does not contain a proper subgroup, whose  index divides $d_{X,E}$.
 Then:
 \begin{itemize}
 \item[(i)]
  $\End^0(X)$ is a simple $\Q$-algebra;
  \item[(ii)]
   $i(E)$ contains $C_X$, i.e., the center $i(E)C_X$ of $\End^0(X,i)$ coincides with $i(E)$;
   \item[(iii)]
 $\End^0(X,i)$ is a central simple $i(E)$-algebra.
 \end{itemize}
 \end{thm}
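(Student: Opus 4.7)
The plan is to reduce to working over a carefully chosen intermediate field $\mathcal{K}$ supplied by the minimal-cover machinery of Section~\ref{minS}, and then extract all three assertions through two parallel orbit-sum / Shimura arguments. Pick a finite Galois extension $\mathcal{F}/K$ inside $K^{\sep}$ containing both $K(X_\lambda)$ and $\mathcal{F}_{X,K}$, and apply Lemma~\ref{maximal} with $L = K(X_\lambda)$ to obtain an intermediate field $\mathcal{K}$ satisfying $\tilde{G}_{\lambda,X,\mathcal{K}} = \tilde{G}_{\lambda,X,K}$ (so that $\End_{\mathcal{K}}^0(X,i) = i(E)$ by Corollary~\ref{moriE}) and such that $\Gal(\mathcal{F}/\mathcal{K})$ has no proper subgroup of index dividing $d_{X,E}$ (Remark~\ref{minLambda}).

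For (i), each $\Gal(\mathcal{K})$-orbit sum $\sum_{s\in O}e_s$ of the central idempotents $e_s$ is a $\Gal(\mathcal{K})$-invariant central idempotent commuting with $i(E)$, hence lies in $\End_{\mathcal{K}}^0(X,i) = i(E)$, a field, and so equals $0$ or $1$; thus $\Gal(\mathcal{K})$ acts transitively on $\I(X)$. Lemma~\ref{remL}(iii) combined with transitivity yields $\dim(X_s) = \dim(X)/|\I(X)|$, and since $i(E) \hookrightarrow D_s = \End^0(X_s)$ Shimura's bound gives $[E:\Q]\mid 2\dim(X_s)$, i.e.\ $|\I(X)| \mid d_{X,E}$. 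The stabilizer of $s$ in $\Gal(\mathcal{F}/\mathcal{K})$ has index $|\I(X)|$, so the choice of $\mathcal{K}$ forces $|\I(X)| = 1$. For (ii), with $\End^0(X)$ now simple the center $C_X$ is a field, and by Theorem~\ref{SCenter} the center $i(E)C_X$ of $\End^0(X,i)$ is an étale $\Q$-algebra, decomposing as $\prod_{j=1}^{t}F_j$ with primitive idempotents $e_j$ and all $F_j$ number fields. The same orbit-sum trick on $\{e_j\}$ shows $\Gal(\mathcal{K})$ acts transitively on $\{e_j\}$, so all $[F_j:\Q]=f$ and, using $X \sim \prod_j e_jX$, all $\dim(e_jX) = \dim(X)/t$. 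Shimura applied to $F_j \hookrightarrow \End^0(e_jX)$ gives $f \mid 2\dim(X)/t$, so $tf \mid 2\dim(X)$ and $\dim_{i(E)}(i(E)C_X) = tf/[E:\Q]$ divides $d_{X,E}$. Since the fixed ring of the $\Gal(\mathcal{F}/\mathcal{K})$-action on the étale $i(E)$-algebra $i(E)C_X$ is $i(E)$, the image of $\Gal(\mathcal{F}/\mathcal{K})$ in $\Aut_{i(E)}(i(E)C_X)$ has order equal to $\dim_{i(E)}(i(E)C_X)$, and its kernel has the same index in $\Gal(\mathcal{F}/\mathcal{K})$; by the minimal-cover property this index is $1$, so $i(E)C_X = i(E)$ and $C_X \subset i(E)$.

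Finally (iii) follows from the double centralizer theorem: $i(E)$ is a $C_X$-subfield of the central simple $C_X$-algebra $\End^0(X)$, so $\End^0(X,i) = Z_{\End^0(X)}(i(E))$ is simple with center $i(E)$, i.e.\ central simple over $i(E)$. The main obstacle I expect lies in (ii): one must resist assuming $i(E)C_X$ is a priori a field and instead handle its decomposition $\prod F_j$ as a Galois-equivariant product of number fields, tying $t$ and $f$ together via Shimura applied separately on each abelian subvariety $e_jX$, so that the combined invariant $\dim_{i(E)}(i(E)C_X) = tf/[E:\Q]$ matches precisely the divisibility condition imposed on $\Gal(\mathcal{F}/\mathcal{K})$ by the minimal-cover setup.
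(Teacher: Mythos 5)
Your overall plan is sound and coincides with the paper's: reduce to $\mathcal{K}$ via the minimal-cover lemma, use the orbit-sum argument (this is exactly Lemma~\ref{transA}) plus Shimura's bound $[E:\Q]\mid 2\dim(X_s)$ to kill $|\I(X)|$, and then attack the center $i(E)C_X$.  Part~(i) and part~(iii) are fine.  But the key step in part~(ii) is wrong as stated.

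You claim that because $\bigl(i(E)C_X\bigr)^{\Gal(\mathcal{F}/\mathcal{K})} = i(E)$, the image of $\Gal(\mathcal{F}/\mathcal{K})$ in $\Aut_{i(E)}\bigl(i(E)C_X\bigr)$ must have order $\dim_{i(E)}\bigl(i(E)C_X\bigr)$.  This is false for an \'etale algebra that is not a field: take $\Gamma = \ST_3$ acting on $R=\Q\times\Q\times\Q$ by permuting the factors.  Then $R^\Gamma=\Q$ and $\Gamma$ acts faithfully, but $|\Gamma|=6>3=\dim_\Q(R)$.  Artin's lemma --- which is what you are implicitly invoking --- requires the ring to be a field.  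Since you have deliberately not first forced $i(E)C_X$ to be a field, you cannot apply it to the whole algebra.  Consequently the index of the kernel need not divide $d_{X,E}$, and the contradiction you want does not come for free.

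The fix stays within your framework but must look at a smaller subgroup.  Write $i(E)C_X=\prod_{j=1}^{t}F_j$, let $S\subset\Gal(\mathcal{F}/\mathcal{K})$ be the stabilizer of the idempotent $e_1$ (index $t$ by your transitivity argument), and let $H\subset S$ be the kernel of the induced action of $S$ on $F_1$.  The orbit-sum trick shows $F_1^{S}=i(E)$ (project any $S$-invariant element of $F_1$ to a $\Gal(\mathcal{F}/\mathcal{K})$-invariant element of $\prod F_j$, which lies in $i(E)$).  Since $F_1$ is a \emph{field}, Artin's lemma applies and $[S:H]=[F_1:i(E)]=f/[E:\Q]$.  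Hence $\bigl[\Gal(\mathcal{F}/\mathcal{K}):H\bigr]=tf/[E:\Q]=\dim_{i(E)}\bigl(i(E)C_X\bigr)$, which you showed divides $d_{X,E}$.  Now the no-proper-subgroup hypothesis forces $H=\Gal(\mathcal{F}/\mathcal{K})$, hence $tf/[E:\Q]=1$, hence $i(E)C_X=i(E)$.  This is essentially what Theorem~\ref{invariants}(iv) and (v) do: first establish that $k\E$ is a field (the $t=1$ step), and only then apply Artin's lemma to the field $k\E$ to bound $[k\E:\E]$.  Your attempt to do both in one shot on the non-field \'etale algebra is precisely where the argument breaks.

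One small additional caveat: your part~(ii) uses $\dim(e_jX)=\dim(X)/t$ for the Shimura bound.  This is correct, but note that the $e_j$ are central only in $\End^0(X,i)$, not in $\End^0(X)$, so you should say a word about why $\prod_j e_jX\to X$ is still an isogeny (it is: $\sum e_j=1$ and $e_je_{j'}=0$ suffice, even without centrality in the larger algebra).
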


 We prove Theorem   \ref{endoE}  in Section \ref{abelianMult}.

\section{Semisimple subalgebras of semisimple algebras}
\label{sec3}
This section contains auxiliary results about semisimple  algebras over fields that will be used in the proof of Theorems \ref{SCenter}, \ref{boundDim} and \ref{bigDim}
in Section \ref{abelianMult}.
\label{centralizerAL}
All associative algebras, subalgebras and rings are assumed to have $1$.
Let $k$ be a field, $\mathcal{A}$ a finite-dimensional central simple $k$-algebra. We write $\End(\mathcal{A})$  for the ring of endomorphisms of the additive abelian group $A$ and
 $\End_k(\mathcal{A})$ for the
$k$-algebra of endomorphisms of the $k$-vector space $\mathcal{A}$.  We have
$$k \cdot \mathrm{id}_{\mathcal{A}} \subset \End_k(\mathcal{A})\subset \End(\mathcal{A}) $$
where $\mathrm{id}_{\mathcal{A}}$ is the identity endomorphism of $\mathcal{A}$. One may view $\End_k(\mathcal{A})$ as the centralizer of $k \cdot \mathrm{id}_{\mathcal{A}} $ in $\End(\mathcal{A})$.
We write $\mathcal{A}^{\mathrm{opp}}$ for the opposite algebra of $\mathcal{A}$; it is well known that $\mathcal{A}^{\mathrm{opp}}$ is also simple central over $k$ and the natural $k$-algebra homomorphism
$$\mathcal{A}\otimes_k \mathcal{A}^{\mathrm{opp}} \to \End_k(\mathcal{A}), \ u\otimes v \mapsto \{x \mapsto uxv \ \forall \ x \in \mathcal{A}\}$$
is an isomorphism of (central simple $k$-algebras).  Further we will identify $\mathcal{A}\otimes_k \mathcal{A}^{\mathrm{opp}}$ with $\End_k(\mathcal{A})$ via this isomorphism and
$$\mathcal{A}=\mathcal{A}\otimes 1, \ \mathcal{A}^{\mathrm{opp}}=1\otimes \mathcal{A}^{\mathrm{opp}}$$
with corresponding $k$-subalgebras of $\End_k(\mathcal{A})$. It is well known that the centralizer of $\mathcal{A}\otimes 1$ (resp. of $1\otimes \mathcal{A}^{\mathrm{opp}}$) in
$\End(\mathcal{A})$  actually lies in $\End_k(\mathcal{A})$ (because both subalgebras contain $k\otimes 1=1\otimes k=k \cdot \mathrm{id}_{\mathcal{A}}$) and coincides with $1\otimes \mathcal{A}^{\mathrm{opp}}$ (resp. with $\mathcal{A}\otimes 1$).

Let  $\mathcal{B}$ be a   $k$-subalgebra of $\mathcal{A}$.
Let $\mathcal{Z}_{\mathcal{A}}(\mathcal{B})$ be the centralizer of $\mathcal{B}$ in $\mathcal{A}$. Clearly, $\mathcal{Z}_{\mathcal{A}}(\mathcal{B})$ is a $k$-subalgebra
of $\mathcal{A}$; in addition, $\mathcal{B}$ lies in the {\sl double centralizer} of $B$, i.e., in the centralizer $\mathcal{Z}_{\mathcal{A}}(\mathcal{Z}_{\mathcal{A}}(\mathcal{B}))$
 of $\mathcal{Z}_{\mathcal{A}}(\mathcal{B})$. It is also clear that the center of  $\mathcal{B}$ lies in the center
of $\mathcal{Z}_{\mathcal{A}}(\mathcal{B})$.
The following assertion is well known in the case of simple $\mathcal{B}$.

\begin{thm}
\label{ssCenter}
Suppose that $\mathcal{B}$ is a semisimple $k$-algebra. Then $\mathcal{Z}_{\mathcal{A}}(\mathcal{B})$  is also a semisimple $k$-algebra.
In addition, the centralizer of $\mathcal{Z}_{\mathcal{A}}(\mathcal{B})$  in $\mathcal{A}$ coincides with $\mathcal{B}$, i.e., $\mathcal{B}$ coincides with its own double centralizer in
 $\mathcal{A}$.

 In particular, the centers of $\mathcal{B}$  and $\mathcal{Z}_{\mathcal{B}}(\mathcal{A})$ do coincide.

 If, in addition, $\mathcal{B}$  is commutative then the center of
 $\mathcal{Z}_{\mathcal{A}}(\mathcal{B})$ coincides with  $\mathcal{B}$.
\end{thm}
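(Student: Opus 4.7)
The classical double centralizer theorem gives the conclusion whenever $\mathcal{B}$ itself is simple; my plan is to reduce the semisimple case to that one via the block decomposition of $\mathcal{B}$.

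First, I would write $\mathcal{B}=\bigoplus_{i=1}^{n}\mathcal{B}_i$ as a direct sum of simple $k$-algebras, with $e_i\in \mathcal{B}_i\subset \mathcal{B}\subset\mathcal{A}$ the central idempotent that is the identity of $\mathcal{B}_i$. The $e_i$ are pairwise orthogonal and sum to $1_{\mathcal{A}}=1_{\mathcal{B}}$, and each $e_i$ lies in the center of $\mathcal{B}$, hence in $\mathcal{Z}_{\mathcal{A}}(\mathcal{B})$. I would then invoke the Peirce decomposition $\mathcal{A}=\bigoplus_{i,j}e_i\mathcal{A}e_j$ and observe that an element of $\mathcal{A}$ commutes with all of $\mathcal{B}$ if and only if (a) it commutes with every $e_i$, which forces it to lie in the diagonal part $\bigoplus_i e_i\mathcal{A}e_i$, and (b) within each block $e_i\mathcal{A}e_i$ it commutes with $\mathcal{B}_i$. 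This yields the decomposition
\[
\mathcal{Z}_{\mathcal{A}}(\mathcal{B})=\bigoplus_{i=1}^n \mathcal{Z}_{e_i\mathcal{A}e_i}(\mathcal{B}_i).
\]

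Next I would verify that for each idempotent $e_i$ the corner algebra $e_i\mathcal{A}e_i$ (with identity $e_i$, and with $k$ sitting as $ke_i$) is again a finite-dimensional central simple $k$-algebra. This is standard: writing $\mathcal{A}\cong M_m(D)$ for a central $k$-division algebra $D$, a nonzero idempotent $e_i$ of rank $r$ gives $e_i\mathcal{A}e_i\cong M_r(D)$, whose center is $ke_i$. Inside this central simple $k$-algebra the subalgebra $\mathcal{B}_i$ is a simple $k$-algebra with the same identity $e_i$, so the classical double centralizer theorem (for the simple case, which the paper invokes as known) applies: $\mathcal{Z}_{e_i\mathcal{A}e_i}(\mathcal{B}_i)$ is simple, its own centralizer in $e_i\mathcal{A}e_i$ is $\mathcal{B}_i$, and its center coincides with the center of $\mathcal{B}_i$.

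Reassembling, I would conclude: $\mathcal{Z}_{\mathcal{A}}(\mathcal{B})$ is a finite direct sum of simple $k$-algebras, hence semisimple; its center is $\bigoplus_i Z(\mathcal{B}_i)=Z(\mathcal{B})$, giving the coincidence of centers; and in the commutative case $Z(\mathcal{B})=\mathcal{B}$, giving the last assertion. For the double centralizer, any element of $\mathcal{Z}_{\mathcal{A}}(\mathcal{Z}_{\mathcal{A}}(\mathcal{B}))$ must in particular commute with every $e_i\in \mathcal{Z}_{\mathcal{A}}(\mathcal{B})$, so again lies in $\bigoplus_i e_i\mathcal{A}e_i$, and within each block it must centralize $\mathcal{Z}_{e_i\mathcal{A}e_i}(\mathcal{B}_i)$; by the simple case this returns $\mathcal{B}_i$, so the total double centralizer is $\bigoplus_i \mathcal{B}_i=\mathcal{B}$.

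The main delicate point is Step 2: clean bookkeeping with the Peirce blocks and the fact that the $\mathcal{B}_i$ are \emph{non-unital} subalgebras of $\mathcal{A}$ in the global sense (their identity is $e_i$, not $1_{\mathcal{A}}$), so one must consistently work inside $e_i\mathcal{A}e_i$ when applying the classical simple-case theorem. Once that framework is set up, everything else is formal.
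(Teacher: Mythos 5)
Your proof is correct, and it takes a genuinely different route from the paper's. The paper avoids any block decomposition of $\mathcal{B}$: it identifies $\End_k(\mathcal{A})$ with $\mathcal{A}\otimes_k\mathcal{A}^{\mathrm{opp}}$, notes that $\mathcal{Z}_{\mathcal{A}}(\mathcal{B})$ is the centralizer of $\mathcal{B}\otimes_k\mathcal{A}^{\mathrm{opp}}$ in $\End_k(\mathcal{A})$ — i.e.\ the endomorphism ring of the semisimple $\mathcal{B}\otimes_k\mathcal{A}^{\mathrm{opp}}$-module $\mathcal{A}$ — so semisimplicity is immediate, and it gets the double-centralizer identity from the Jacobson density theorem applied to that module, with the reduction $\mathcal{C}\otimes_k\mathcal{A}^{\mathrm{opp}}\subset\mathcal{B}\otimes_k\mathcal{A}^{\mathrm{opp}}\Rightarrow\mathcal{C}\subset\mathcal{B}$. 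Your argument instead decomposes $\mathcal{B}=\bigoplus_i\mathcal{B}_i$, passes to the diagonal Peirce corners $e_i\mathcal{A}e_i$ (each again central simple over $k$), and invokes the classical simple-case double centralizer theorem blockwise, reassembling at the end. Both are complete. The paper's approach buys uniformity: it never needs the simple case as a black box because Jacobson density does the work in one stroke, and it treats semisimple $\mathcal{B}$ directly. Your approach buys concreteness: it makes the structure of $\mathcal{Z}_{\mathcal{A}}(\mathcal{B})$ visible as a direct sum of simple pieces, each living in an identifiable corner $M_{r_i}(D)$, and it reuses the simple case the paper already cites (via Theorem~\ref{herst}/Herstein) rather than re-proving it. The one point you rightly flag as delicate — treating $\mathcal{B}_i$ as a unital subalgebra of $e_i\mathcal{A}e_i$ rather than of $\mathcal{A}$ — you handle correctly; that is exactly the bookkeeping that makes the blockwise reduction go through.
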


 \begin{proof}
 The tensor product $\mathcal{B}\otimes_k \mathcal{A}^{\mathrm{opp}}$ is a {\sl semisimple} $k$-algebra, because $\mathcal{A}^{\mathrm{opp}}$ is central simple and $\mathcal{B}$ is simple.
 The algebra
 $$\mathcal{Z}_{\mathcal{A}}(\mathcal{B})=\mathcal{Z}_{\mathcal{A}}(\mathcal{B})\otimes 1 \subset A\otimes_k \mathcal{A}^{\mathrm{opp}}=\End_k(\mathcal{A})$$
coincides with the centralizer of the {\sl semisimple} algebra
$$\mathcal{B}\otimes_k \mathcal{A}^{\mathrm{opp}} \subset \mathcal{A}\otimes_k \mathcal{A}^{\mathrm{opp}}=\End_k(\mathcal{A}),$$
i.e.,  it is the endomorphism algebra of the {\sl semisimple} $\mathcal{B}\otimes_k \mathcal{A}^{\mathrm{opp}}$-module $\mathcal{A}$
and therefore is semisimple.
By the Jacobson density theorem, the double centralizer of
$$\mathcal{B}\otimes_k \mathcal{A}^{\mathrm{opp}}\subset \mathcal{A}\otimes_k \mathcal{A}^{\mathrm{opp}}=\End_k(\mathcal{A})$$
coincides with $\mathcal{B}\otimes_k \mathcal{A}^{\mathrm{opp}}$. On the other hand, if $\mathcal{C}$ is the double centralizer of $\mathcal{B}$ in $\mathcal{A}$ then
$\mathcal{C}$ contains $\mathcal{B}$ and $\mathcal{C}\otimes_k \mathcal{A}^{\mathrm{opp}}$ lies in the double centralizer of $\mathcal{B}\otimes_k \mathcal{A}^{\mathrm{opp}}$ , i.e.,
$$\mathcal{C}\otimes_k \mathcal{A}^{\mathrm{opp}} \subset \mathcal{B}\otimes_\mathcal{A}^{\mathrm{opp}}.$$
This implies that $\mathcal{C} \subset \mathcal{B}$ and therefore $\mathcal{C}=\mathcal{B}$.
 \end{proof}

 \begin{thm}
\label{herst}
Let $\mathcal{B}$ be a simple $k$-subalgebra of $\mathcal{A}$.

Then its centralizer $\mathcal{Z}_{\mathcal{A}}(\mathcal{B})$ is also a simple
$k$-algebra. In addition,
$$\dim_k(\mathcal{B})\cdot  \dim_k (\mathcal{Z}_{\mathcal{A}}(\mathcal{B}))=\dim_k(\mathcal{A}).$$
\end{thm}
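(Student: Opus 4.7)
The plan is to mimic the strategy already used in the proof of Theorem \ref{ssCenter}, namely identify $\End_k(\mathcal{A}) = \mathcal{A} \otimes_k \mathcal{A}^{\mathrm{opp}}$ and interpret $\mathcal{Z}_{\mathcal{A}}(\mathcal{B})$ as the endomorphism ring of $\mathcal{A}$ viewed as a module over a certain simple algebra. The simplicity assertion then falls out of Wedderburn, and the dimension formula becomes a bookkeeping exercise with the invariants of that Wedderburn decomposition.

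More precisely, since $\mathcal{B}$ is simple over $k$ and $\mathcal{A}^{\mathrm{opp}}$ is central simple over $k$, the tensor product $\mathcal{C} := \mathcal{B} \otimes_k \mathcal{A}^{\mathrm{opp}}$ is a \emph{simple} (finite-dimensional) $k$-algebra. Viewing $\mathcal{C}$ as a subalgebra of $\mathcal{A} \otimes_k \mathcal{A}^{\mathrm{opp}} = \End_k(\mathcal{A})$, precisely the same argument as in Theorem \ref{ssCenter} identifies the centralizer of $\mathcal{B}\otimes 1$ in $\End_k(\mathcal{A})$ with $\mathcal{Z}_{\mathcal{A}}(\mathcal{B})\otimes 1$, while this centralizer is also the centralizer of all of $\mathcal{C}$, i.e., $\End_{\mathcal{C}}(\mathcal{A})$. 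Since $\mathcal{C}$ is simple Artinian, Wedderburn gives a division $k$-algebra $\Delta$ and an integer $n$ with $\mathcal{C} \cong M_n(\Delta^{\mathrm{opp}})$; the unique simple $\mathcal{C}$-module $D$ has $\dim_k D = n\dim_k\Delta$ and $\End_{\mathcal{C}}(D) \cong \Delta$. As a $\mathcal{C}$-module $\mathcal{A}$ decomposes as $D^{\oplus r}$ for some $r \ge 1$, so $\mathcal{Z}_{\mathcal{A}}(\mathcal{B}) \cong \End_{\mathcal{C}}(\mathcal{A}) \cong M_r(\Delta)$, which is a simple $k$-algebra.

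For the dimension identity I would simply compare the three equalities
\[
\dim_k(\mathcal{C}) = \dim_k(\mathcal{B})\dim_k(\mathcal{A}) = n^2 \dim_k(\Delta),\qquad
\dim_k(\mathcal{A}) = r\,n\,\dim_k(\Delta),
\]
\[
\dim_k(\mathcal{Z}_{\mathcal{A}}(\mathcal{B})) = r^2\dim_k(\Delta).
\]
Eliminating $\dim_k(\Delta)$ from the first two gives $\dim_k(\mathcal{B}) = n/r$, and then
\[
\dim_k(\mathcal{B})\cdot\dim_k(\mathcal{Z}_{\mathcal{A}}(\mathcal{B})) = \tfrac{n}{r}\cdot r^2\dim_k(\Delta) = r\,n\,\dim_k(\Delta) = \dim_k(\mathcal{A}),
\]
as required.

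The only genuinely nontrivial input is the fact that the tensor product of a simple $k$-algebra with a central simple $k$-algebra is again simple, together with the identification of the centralizer of $\mathcal{B}\otimes 1$ inside $\End_k(\mathcal{A})$ with that of $\mathcal{B}\otimes\mathcal{A}^{\mathrm{opp}}$ — both of these are recorded in the paragraph preceding Theorem \ref{ssCenter}. The main obstacle is therefore not conceptual but notational: one must keep the left/right actions and opposite-algebra conventions straight so that the $r$ and $n$ appearing in $\dim_k(\mathcal{A})$ and in $\dim_k(\mathcal{Z}_{\mathcal{A}}(\mathcal{B}))$ come out matched rather than transposed.
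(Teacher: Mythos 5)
Your proof is correct, and it gives a complete argument where the paper simply cites Theorem~4.3.2 of Herstein without reproducing it. Conceptually you are doing what Herstein does, but you have aligned it nicely with the $\End_k(\mathcal{A}) \cong \mathcal{A}\otimes_k\mathcal{A}^{\mathrm{opp}}$ bookkeeping the paper already set up for Theorem~\ref{ssCenter}, so the self-contained version fits the text well. The Wedderburn step is exactly right: once $\mathcal{C} = \mathcal{B}\otimes_k\mathcal{A}^{\mathrm{opp}}$ is known to be simple Artinian and $\mathcal{A}$ is seen to be $D^{\oplus r}$ for the unique simple $\mathcal{C}$-module $D$, both the simplicity of $\mathcal{Z}_{\mathcal{A}}(\mathcal{B}) \cong M_r(\Delta)$ and the dimension identity drop out of the three counts you list, with $\dim_k\mathcal{B} = n/r$ eliminating $\dim_k\Delta$.

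One wording slip worth fixing: you write that the centralizer of $\mathcal{B}\otimes 1$ in $\End_k(\mathcal{A})$ is $\mathcal{Z}_{\mathcal{A}}(\mathcal{B})\otimes 1$. That is not the centralizer of $\mathcal{B}\otimes 1$ alone, which is $\mathcal{Z}_{\mathcal{A}}(\mathcal{B})\otimes_k\mathcal{A}^{\mathrm{opp}}$; you must also intersect with the centralizer of $1\otimes\mathcal{A}^{\mathrm{opp}}$, namely $\mathcal{A}\otimes 1$, to land on $\mathcal{Z}_{\mathcal{A}}(\mathcal{B})\otimes 1$. Your next clause (``this centralizer is also the centralizer of all of $\mathcal{C}$'') shows you had the right picture in mind, so the logic is sound; just state it as the centralizer of $\mathcal{C} = \mathcal{B}\otimes_k\mathcal{A}^{\mathrm{opp}}$ from the outset, exactly as in the paper's proof of Theorem~\ref{ssCenter}.
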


\begin{proof}
This is a special case of Theorem 4.3.2 on p. 104 of
\cite{Herstein}
\end{proof}

\begin{sect}
\label{rank}
Iy is well known that $\dim_k({\mathcal{A}})$ is a square. Let us put
$$d=d_{\mathcal{A}}:=\sqrt{\dim_k(\mathcal{A})}.$$

Let $k_0$ be a subfield of $k$ such that $k/k_0$ is a finite algebraic {\sl separable} field extension.
Let $\bar{k}_0$ be an algebraic closure of $k_0$. We write $\Sigma_k$ for the $[k:k_0]$-element set of $k_0$-linear field embeddings $k\hookrightarrow \bar{k}_0$. It is well known that
the canonical homomorphism of semisimple commutative $\bar{k}_0$-algebras
$$k\otimes_{k_0}\bar{k}_0 \to \oplus_{\sigma\in \Sigma_k}k\otimes_{k,\sigma}\bar{k}_0$$
is an isomorphism. Notice also that each $k\otimes_{k,\sigma}\bar{k}_0$ is canonically isomorphic to $\bar{k}_0$. This implies easily that
the canonical homomorphism of semisimple  $\bar{k}_0$-algebras
$$\mathcal{A}\otimes_{k_0}\bar{k}_0 \to \oplus_{\sigma\in \Sigma_k}\mathcal{A}\otimes_{k,\sigma}\bar{k}_0$$
is an isomorphism. In addition, each $\mathcal{A}\otimes_{k,\sigma}\bar{k}_0$ is isomorphic to the matrix algebra $\M_d(\bar{k}_0)$ of size $d$ over $\bar{k}_0$.
This implies that $\mathcal{A}\otimes_{k_0}\bar{k}_0$ is isomorphic to a direct sum of $[k:k_0]$ copies of  $\M_d(\bar{k}_0)$.
\begin{rem}
Suppose that $\fchar(k_0)=0$ and provide $\mathcal{A}$ with the structure of the (reductive) $k_0$-Lie algebra, defining
$$[u,v]=uv-vu \ \forall u,v \in \mathcal{A}.$$
Then  $[k:k_0] d_{\mathcal{A}}$ is the  rank  $\rk(\mathcal{A}/k_0)$ of the reductive $k_0$-Lie algebra  $\mathcal{A}$. Indeed, the rank of the
$k_0$-Lie algebra  $\mathcal{A}$ coincides with the rank of the $\bar{k}_0$-Lie algebra  $\mathcal{A}\otimes_{k_0}\bar{k}_0$
while the latter equals $[k:k_0]$ times the rank of $\M_d(\bar{k}_0)$. It remains to recall that the rank of $\M_d(\bar{k}_0)$ over $\bar{k}_0$  equals $d=d_{\mathcal{A}}$.
\end{rem}

\begin{thm}
\label{mainalg}
  Let $\E$ be a
  subfield of $\mathcal{A}$  such that $\E\supset k_0$. (In particular, $\mathcal{A}$ and $\E$ have the same multiplicative identity $1$.)
   Let $k \E\subset \mathcal{A}$ be the image of the natural $k$-algebra homomorphism
$$\E\otimes_{k_0}k \to \mathcal{A}, \ u\otimes c\mapsto uc=cu \ \forall u\in \E, c\in k.$$
and
$\mathcal{Z}_{\mathcal{A}}(\E)\subset \mathcal{A}$  the centralizer of $\E$ in $\mathcal{A}$.

Then $\E, k \E$ and $\mathcal{Z}_{\mathcal{A}}(\E)$ enjoy the following properties.

\begin{itemize}
\item[(0)]
The degree $[\E:k_0]$ divides  $\rk(\mathcal{A}/k_0)=[k:k_0]  d_{\mathcal{A}}$. In addition,
if  $k\E$ is a field then $[k\E:k_0]$ divides $[k:k_0]d_{\mathcal{A}}$,   the degree $[k\E:k]$ divides $d_{\mathcal{A}}$
and $[k\E:\E]$ divides  $[k:k_0]d_{\mathcal{A}}/[\E:k_0]$.

\item[(i)]
 $k\E$ is a commutative semisimple $k$-algebra.
 \item[(ii)]
  $\mathcal{Z}_{\mathcal{A}}(\E)$ is a semisimple $k$-algebra that coincides with the centralizer of
  $k \E$ in $\mathcal{A}$.
   \item[(iii)]
   The center of  $\mathcal{Z}_{\mathcal{A}}(\E)$  coincides with $k \E$.  The centralizer of  $\mathcal{Z}_{\mathcal{A}}(\E)$   in $\mathcal{A}$ coincides with $k \E$.
   \item[(iv)]
    $\mathcal{Z}_{\mathcal{A}}(\E)$ is a simple $k$-algebra if and only if $k \E$ is a field. (E.g., if $\E$ contains $k$.)
    \item[(v)]
    If $\fchar(k_0)=0$ then
    $$\dim_{\E}(\mathcal{Z}_{\mathcal{A}}(\E)) \le \left(\frac{d_{\mathcal{A}} [k:k_0]}{[\E:k_0]}\right)^2.$$
    \item[(vi)]
     If $\fchar(k_0)=0$ then
    the equality
 $$\dim_{\E}(\mathcal{Z}_{\mathcal{A}}(\E)) = \left(\frac{d_{\mathcal{A}} [k:k_0]}{[\E:k_0]}\right)^2$$
 holds if and only if $\E$ contains $k_0$.
\end{itemize}

\end{thm}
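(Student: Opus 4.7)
To prove (i), observe that since $k/k_0$ is finite separable, the tensor product $\E \otimes_{k_0} k$ is a commutative semisimple $k$-algebra (a product of fields): the minimal polynomial of a primitive element of $k/k_0$ remains separable over $\E$. The natural $k$-algebra homomorphism $\E \otimes_{k_0} k \to \mathcal{A}$, $u \otimes c \mapsto uc$, has image $k\E$, which is a quotient of a commutative semisimple algebra, hence itself commutative and semisimple. This proves (i). For (ii)-(iv), note that $k$ lies in the center of $\mathcal{A}$, so $\mathcal{Z}_{\mathcal{A}}(\E) = \mathcal{Z}_{\mathcal{A}}(k\E)$. Theorem~\ref{ssCenter} applied to the semisimple subalgebra $k\E$ yields (ii), identifies the double centralizer with $k\E$, and (since $k\E$ is commutative) identifies the center of $\mathcal{Z}_{\mathcal{A}}(k\E)$ with $k\E$, which gives (iii). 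For (iv), if $k\E$ is a field then Theorem~\ref{herst} gives $\mathcal{Z}_{\mathcal{A}}(k\E)$ simple; if not, the non-trivial decomposition $k\E = \bigoplus_j F_j$ by orthogonal idempotents $e_j \in k\E \subseteq \mathcal{Z}_{\mathcal{A}}(k\E)$ produces the direct sum $\mathcal{Z}_{\mathcal{A}}(k\E) = \bigoplus_j e_j \mathcal{Z}_{\mathcal{A}}(k\E)$ of non-zero two-sided ideals, so the centralizer is not simple.

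For (0) I would combine two classical facts: commutative $k$-subalgebras of $\mathcal{A}$ have $k$-dimension at most $d_{\mathcal{A}}$ (seen after base change to $\bar{k}$, where $\mathcal{A} \otimes_k \bar{k} = \M_{d_{\mathcal{A}}}(\bar{k})$ and commutative subalgebras of the matrix algebra have dimension at most its size); and if in addition $k\E$ is a field, then $[k\E:k] \mid d_{\mathcal{A}}$ by the classical divisibility theorem for subfields of a CSA containing the center. Since $\E$ is a field and $\E \subseteq k\E$, the $\E$-module $k\E$ is free, so $[\E:k_0] \mid [k\E:k_0]$. For the first claim $[\E:k_0] \mid [k:k_0] d_{\mathcal{A}}$, I would base change to $\bar{k}_0$: $\mathcal{A}_{\bar{k}_0} = \bigoplus_{\sigma \in \Sigma_k} \M_{d_{\mathcal{A}}}(\bar{k}_0)$ and $\E \otimes_{k_0} \bar{k}_0 = \bigoplus_{\tau \in \Sigma_\E} \bar{k}_0$ (the latter valid under separability of $\E/k_0$, automatic in characteristic zero). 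The embedding produces Galois-equivariant idempotents whose ranks $r_{\tau}^{\sigma}$ satisfy $\sum_{\tau,\sigma} r_{\tau}^{\sigma} = d_{\mathcal{A}}[k:k_0]$, and transitivity of $\Gal(\bar{k}_0/k_0)$ on $\Sigma_\E$ forces $\sum_{\sigma} r_{\tau}^{\sigma}$ to be independent of $\tau$, whence $[\E:k_0]$ divides $d_{\mathcal{A}}[k:k_0]$. The remaining divisibilities when $k\E$ is a field are consequences of the displayed classical facts.

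The heart of the proof is (v) and (vi). Decompose $k\E = \bigoplus_j F_j$ as a direct sum of fields with primitive orthogonal idempotents $e_j$; each $F_j$ contains $\E$ via the injective projection $\E \hookrightarrow k\E \twoheadrightarrow F_j$. Writing $\mathcal{A} \cong \M_n(D)$ for a central division $k$-algebra $D$, Morita theory shows that $e_j \mathcal{A} e_j$ is a central simple $k$-algebra of degree $d_j$, with $\sum_j d_j = d_{\mathcal{A}}$. Since $F_j$ is a subfield of this CSA containing its center, Theorem~\ref{herst} yields $\dim_{F_j} \mathcal{Z}_{e_j \mathcal{A} e_j}(F_j) = (d_j/[F_j:k])^2$; combining the decomposition $\mathcal{Z}_{\mathcal{A}}(k\E) = \bigoplus_j \mathcal{Z}_{e_j \mathcal{A} e_j}(F_j)$ with the conversion $[F_j:\E] = [F_j:k_0]/[\E:k_0]$ gives
\begin{equation*}
\dim_{\E} \mathcal{Z}_{\mathcal{A}}(\E) = \frac{[k:k_0]^2}{[\E:k_0]} \sum_j \frac{d_j^2}{[F_j:k_0]}.
\end{equation*}
Using the bounds $d_j \le d_{\mathcal{A}}$ and $[F_j:k_0] \ge [\E:k_0]$ (the latter since $F_j \supseteq \E$) gives $d_j^2/[F_j:k_0] \le d_{\mathcal{A}} \cdot d_j/[\E:k_0]$, and summing with $\sum_j d_j = d_{\mathcal{A}}$ yields $\sum_j d_j^2/[F_j:k_0] \le d_{\mathcal{A}}^2/[\E:k_0]$, which is precisely (v). For (vi)---whose stated condition ``$\E$ contains $k_0$'' is evidently a typo for ``$\E$ contains $k$'', as the former is always assumed---equality in the chain above forces $d_j = d_{\mathcal{A}}$ and $F_j = \E$ for every $j$, and $\sum_j d_j = d_{\mathcal{A}}$ then admits only a single summand $k\E = F_1 = \E$, i.e.~$k \subseteq \E$. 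The principal obstacle is the case when $k\E$ is not a field: Theorem~\ref{herst} does not apply to $k\E$ as a whole, and it is the Morita-theoretic identity $\sum_j d_j = d_{\mathcal{A}}$ that makes the termwise estimate sum sharply to the required inequality.
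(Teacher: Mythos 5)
Your proof is correct, and for the substantive parts (0), (v), (vi) it takes a genuinely different route from the paper's. The paper first proves (v)--(vi) under the extra hypothesis that $k\E$ is a field, using Theorem~\ref{herst} directly on $k\E\subset\mathcal{A}$; then it treats the general case by decomposing $k\E=\oplus_j F_j$, $\mathcal{Z}_{\mathcal{A}}(\E)=\oplus_j\mathcal{A}_j$ and applying the already-proved field case to each piece, where the crucial additive relation among the pieces is obtained by identifying $\rk(\mathcal{A}/k_0)$ with $\rk(\mathcal{Z}_{\mathcal{A}}(\E)/k_0)$ via a Cartan-subalgebra argument for the reductive $k_0$-Lie algebra $\mathcal{A}$ (Remarks~\ref{rank} and \ref{rank2}), giving $\sum_j[F_j:k_0]\sqrt{\dim_{F_j}\mathcal{A}_j}=[k:k_0]d_{\mathcal{A}}$. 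You bypass the Lie-algebra rank machinery entirely: you apply Theorem~\ref{herst} to the subfield $F_j$ of the ``corner'' CSA $e_j\mathcal{A}e_j$, and you derive the additive relation $\sum_j d_j=d_{\mathcal{A}}$ from Morita theory via $\mathcal{A}\cong\M_n(D)$, $e_j\mathcal{A}e_j\cong\M_{r_j}(D)$, $\sum r_j=n$. (These two identities encode the same content, since $\sqrt{\dim_{F_j}\mathcal{A}_j}=d_j/[F_j:k]$, but yours is obtained more concretely.) You then produce the bound by a termwise estimate $d_j^2/[F_j:k_0]\le d_{\mathcal{A}}d_j/[\E:k_0]$, which handles the field and non-field cases uniformly, and the equality analysis is transparent: it forces $|J|=1$ and $\E=k\E\supset k$. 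Your Galois-orbit counting of idempotent ranks for the first divisibility in (0) likewise replaces the paper's rank argument and is valid. Two small remarks: when bounding $\dim_k(k\E)$ you should say ``semisimple commutative'' rather than ``commutative'' subalgebra (Schur's bound for general commutative subalgebras of $\M_d$ is $\lfloor d^2/4\rfloor+1$, not $d$); and you are right that the condition ``$\E$ contains $k_0$'' in the printed statement of (vi) is a typo for ``$\E$ contains $k$'' -- the paper's own proof of (vi) arrives exactly at that conclusion.
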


\begin{ex}
If $\E=k$ then $[\E:k_0]=[k:k_0]$ and $\mathcal{Z}_{\mathcal{A}}(\E)=\mathcal{A}$. Then
$$\dim_k(\mathcal{Z}_{\mathcal{A}}(\E)) =d_{\mathcal{A}}^2=\left(\frac{d_{\mathcal{A}} [k:k_0]}{[k:k_0]}\right)^2=\left(\frac{d_{\mathcal{A}} [k:k_0]}{[\E:k_0]}\right)^2.$$
\end{ex}

\begin{rem}
\label{rank2}
If $\fchar(k_0)=0$ then the ranks of the $k_0$-Lie algebra $\mathcal{A}$  and its subalgebra $\mathcal{Z}_{\mathcal{A}}(\E)$ coincide. Indeed, it suffices to check that
$\mathcal{Z}_{\mathcal{A}}(\E)$ contains a Cartan subalgebra of  $\mathcal{A}$. In order to do that, notice that $\E/k_0$ is a finite separable field extension and therefore
there is $u \in \E$ that generates $\E$ over $k_0$. Clearly, $u$ is semisimple and the centralizer of $u$ in  $\mathcal{A}$ coincides with the centralizer of $\E$, i.e., with
$\mathcal{Z}_{\mathcal{A}}(\E)$. Since $u$ is semisimple, there is a Cartan subalgebra $\mathfrak{h}$ of $\mathcal{A}$ that contains $u$. Since $\mathfrak{h}$  is commutative,
it commutes with its own element $u$ and therefore lies in $\mathcal{Z}_{\mathcal{A}}(\E)$. This ends the proof.

\end{rem}

\begin{proof}[Proof of Theorem \ref{mainalg}]
Since $k/k_0$ is separable, $\E\otimes_{k_0}k$ is isomorphic to a direct sum of fields. The same is true for its quotient $k\E$, which proves (i).
Since $k$ is is the center of $\mathcal{A}$ and $k\E$ is generated by $k$ and $\E$, the centralizer of semisimple $k$-akgebra $k\E$ coincides with the centralizer of $\E$.
Now (ii) follows from Theorem \ref{ssCenter}. Since $k\E$ is commutative, (iii) follows from (ii), thanks to Theorem \ref{ssCenter}, and (iv) follows from (ii) and (iii).

Let us prove (v) and (vi).  Recall that $\mathcal{Z}_{\mathcal{A}}(\E)=\mathcal{Z}_{\mathcal{A}}(k\E)$.

 First, assume that $k\E$ is a field.  Then
 $$[k\E:k]\cdot [k:k_0]=[k\E:k_0]=[k\E:\E]\cdot [\E:k_0], \ [\E:k_0] \le [k\E:k_0]$$
 and therefore
 \begin{equation}
 \label{ineq}
 \frac{[k\E:\E]}{[k\E:k_0]^2}=\frac{1}{[\E:k_0] [k\E:k_0]}\le \frac{1}{[\E:k_0]^2};
 \end{equation}
 the equality holds if and only if $[k\E:k_0]=[\E:k_0]$, i.e., $k\E=\E$, which means that $\E$ contains $k$.

By Theorem \ref{herst},
$$\dim_k(\mathcal{Z}_{\mathcal{A}}(\E)) =\dim_k(\mathcal{Z}_{\mathcal{A}}(k\E)) =\frac{\dim_k(\mathcal{A})}{[k\E:k]}=\frac{d_{\mathcal{A}}^2}{[k\E:k]}.$$
This implies that the $k\E$-dimension of $\mathcal{Z}_{\mathcal{A}}(\E)$ is given by the formula
$$\dim_{k\E}(\mathcal{Z}_{\mathcal{A}}(\E)) =\frac{\dim_{k\E}(\mathcal{Z}_{\mathcal{A}}(\E))}{[k\E:k]}=\frac{d_{\mathcal{A}}^2}{[k\E:k][k\E:k]}=\frac{d_{\mathcal{A}}^2}{[k\E:k]^2}.$$
It follows that the $\E$-dimension of $\mathcal{Z}_{\mathcal{A}}(\E)$ is given by the formula
$$\dim_{\E}(\mathcal{Z}_{\mathcal{A}}(\E)) =[k\E:\E]\cdot \dim_{k\E}(\mathcal{Z}_{\mathcal{A}}(\E)) =\frac{[k\E:\E]}{[k\E:k]^2}\cdot  d_{\mathcal{A}}^2=$$
$$\frac{[k\E:\E]}{[k\E:k]^2 [k:k_0]^2}\cdot [k:k_0]^2 d_{\mathcal{A}}^2=\frac{[k\E:\E]}{[k\E:k_0]^2}\cdot ([k:k_0] d_{\mathcal{A}})^2 \le$$
$$ \frac{1}{[\E:k_0]^2}\cdot ([k:k_0] d_{\mathcal{A}})^2;$$
in light of \eqref{ineq}, the equality holds if and only if $\E$ contains $k$.

Now suppose that $k\E$ is {\sl not} a field and let us split semisimple $k\E$ into a finite direct sum
$$k\E=\oplus_{j\in J} F_j$$
of fields $F_j$. Here the set of indices $J$ is finite nonempty but {\sl not} a singleton. We write $e_j$ for the idenity element of $F_j \subset k\E$. Clearly,
\begin{equation}
\label{idemp}
e_j^2=e_j, \ \sum_{j\in J}e_j =1\in \mathcal{A}, \ e_j e_{j^{\prime}}=0 \  \forall   j\ne j^{\prime}.
\end{equation}
The map
$$i_j:\E \to F_j, \ u \mapsto e_ju=e_j u e_j$$
is a field embedding.
Let us put
$$\mathcal{A}_j=e_j \mathcal{Z}_{\mathcal{A}}(\E)=e_j \mathcal{Z}_{\mathcal{A}}(\E) e_j \subset \mathcal{Z}_{\mathcal{A}}(\E)\subset \mathcal{A}.$$
Clearly, $\mathcal{A}_j$ is a central simple $F_j$-algebra and
$$\mathcal{Z}_{\mathcal{A}}(\E)=\oplus_{j\in J}\mathcal{A}_j.$$
The field embedding $i_j:\E \to F_j$ allows us to view $\mathcal{A}_j$ as $\E$-algebra. Clearly,
$$\dim_{\E}(\mathcal{Z}_{\mathcal{A}}(\E)) =\sum_{j\in J}\dim_{\E}(\mathcal{A}_j).$$
Let us put
$$d_j:=\sqrt{\dim_{F_j}(\mathcal{A}_j)};$$
all $d_j$ are positive integers.

Applying Remark \ref{rank} to $F_j$ (instead of $k$) and   $\mathcal{A}_j$ (instead of  $\mathcal{A}$), we conclude that
the rank $\rk(\mathcal{A}_j)$ of  $k_0$-Lie algebra  $\mathcal{A}_j$  is $[F_j:k_0] d_j$. This implies that the rank of the  reductive $k_0$-Lie subalgebra
$\mathcal{Z}_{\mathcal{A}}(\E)$ of  $\mathcal{A}$ is $\sum_{j\in J}[F_j:k_0]d_j$.   Remarks \ref{rank} and \ref{rank2} imply that
$$\sum_{j\in J}[F_j:k_0]d_j= [k:k_0]d_{\mathcal{A}}.$$
Applying the already proven case of (v) to  $F_j$ (instead of $k$), $\mathcal{A}_j$ (instead of  $\mathcal{A}$) and the field $i_j(E)$, we conclude that
$$\dim_{\E}(\mathcal{A}_j)=\dim_{i_j(\E)}(\mathcal{A}_j) \le \frac{([F_j:k_0]d_j)^2}{[i_j(\E):k_0]^2}=\frac{([F_j:k_0]d_j)^2}{[\E:k_0]^2}.$$
This implies that
$$\dim_{\E}(\mathcal{Z}_{\mathcal{A}}(\E)) =\sum_{j\in J}\dim_{\E}(\mathcal{A}_j) \le \frac{\sum_{j\in J}([F_j:k_0]d_j)^2}{[\E:k_0]^2}.$$
Since $J$ is {\sl not} a singleton and all $d_j$ are positive,
$$\sum_{j\in J}([F_j:k_0]d_j)^2 <  \left(\sum_{j\in J}[F_j:k_0]d_j\right)^2 = (d_{\mathcal{A}} [k:k_0])^2.$$
This implies that
$$\dim_{\E}(\mathcal{Z}_{\mathcal{A}}(\E)) <  \frac{(d_{\mathcal{A}} [k:k_0])^2}{[\E:k_0]^2},$$
which ends the proof of (v) and (vi).

It remains to prove (0). First assume that $k\E$ is a field. Then $\mathcal{Z}_{\mathcal{A}}(\E)$ is a central simple
$k\E$-algebra. Then the rank of  $k_0$-Lie algebra $\mathcal{Z}_{\mathcal{A}}(\E)$ equals $[k\E:k_0]\cdot \mathbf{d}$
where the positive integer
$$ \mathbf{d}:=\sqrt{\dim_{k\E}(\mathcal{Z}_{\mathcal{A}}(\E))}.$$
By Remark \ref{rank2}, the ranks of $\mathcal{A}$ and $\mathcal{Z}_{\mathcal{A}}(\E)$  do coincide  and therefore the rank of  $k_0$-Lie algebra
$\mathcal{A}$ is divisible by  $[k\E:k_0]$. This means that $[k:k_0] d_{\mathcal{A}}$ is divisible by $[k\E:k_0]$,
   $[k:k_0] d_{\mathcal{A}}$ is
  divisible by $[k\E:k_0]$.  Since $[k\E:k_0]=[k\E:\E] [\E:k_0]$,
  $[k\E:k]$ divides $]d_{\mathcal{A}}$
and $[k\E:\E]$ divides  $[k:k_0]d_{\mathcal{A}}/[\E:k_0]$.
  In addition, $[k\E:\E]$ divides  $[k:k_0] d_{\mathcal{A}}/[\E:k_0]$.

Now let us do the general case when (in the notation above) $k\E$ is a direct sum $\oplus_{j\in J} F_j$ of overfields $F_j\supset E$ and
$\mathcal{Z}_{\mathcal{A}}(\E)$ is a direct sum $\oplus_{j\in J}\mathcal{A}_j$ of central simple $F_j$-algebras $\mathcal{A}_j$. Then the rank of  $k_0$-Lie algebra
$\mathcal{A}_j$ equals $[F_j:k_0]\cdot  \mathbf{d}_j$
where the positive integer
$$ \mathbf{d}_j=\sqrt{\dim_{F_j}(\mathcal{A}_j)}.$$
Since $[F_j:k_0]$ is divisible by $[\E:k_0]$, the rank of  $\mathcal{A}_j$ is also  divisible by $[\E:k_0]$. Since the rank of $\mathcal{Z}_{\mathcal{A}}(\E)$ is the sum of the ranks of
$\mathcal{A}_j$, it is also divisible by  $[\E:k_0]$. By Remark \ref{rank2}, the ranks of $\mathcal{A}$ and $\mathcal{Z}_{\mathcal{A}}(\E)$  do coincide  and therefore the rank of  $k_0$-Lie algebra
$\mathcal{A}$ is divisible by  $[\E:k_0]$.
\end{proof}

\end{sect}

\begin{sect}
\label{groupA}
We write $\Aut_{k_0}(\mathcal{A})$ for the automorphism group of the (associative) $k_0$-algebra $\mathcal{A}$. Let $G$ be a group and
$$\rho: G \to \Aut_{k_0}(\mathcal{A})$$
be a group homomorphism. Clearly, $k_0$ lies in the subalgebra $\mathcal{A}^G$ of $G$-invariants of $\mathcal{A}$. It is also clear that $G$ leaves stable the center $k$, i.e., $\rho$ induces
the group homomorphism
$$\rho_k: G \to \Aut(k/k_0)$$
where $\Aut(k/k_0)$ is the (finite) automorphism group of the field extension $k/k_0$.

\begin{thm}
\label{invariants}
Suppose that $\E$ is a field that lies in $\mathcal{A}^G$  and contains $k_0$. Then $\E$ and $\mathcal{Z}_{\mathcal{A}}(\E)$ enjoy the following properties.

\begin{itemize}
\item[(i)]
The field $\E$ is a finite algebraic  extension of $k_0$ and the degree $[\E:k_0]$  divides $\rk(\mathcal{A}/k_0)=[k:k_0]d_{\mathcal{A}}$.
\item[(ii)]
The subalgebras $k\E$  and  $\mathcal{Z}_{\mathcal{A}}(\E)$ of $\mathcal{A}$ are $G$-stable.

\item[(iii)]
Let us assume that (in the notation above) $k\E$
 is a finite direct sum $\oplus_{j\in J} F_j$ of overfields $F_j\supset \E$ and
$\mathcal{Z}_{\mathcal{A}}(\E)$ is a finite direct sum $\oplus_{j\in J}\mathcal{A}_j$ of central simple $F_j$-algebras $\mathcal{A}_j=e_j \mathcal{Z}_{\mathcal{A}}(\E)$.
Then there is a group homomorphism
$$\rho_J: G \to \mathrm{Perm}(J)$$
of $G$ into the group $ \mathrm{Perm}(J)$ of permutations of $J$  such that if $\rho_J(j)=j^{\prime}$ then
$$\rho(g)(F_j)=F_{j^{\prime}},  \rho(g)(\mathcal{A}_j)=\mathcal{A}_{j^{\prime}} \ \forall g\in G.$$

\item[(iiibis)]
If $\mathcal{Z}_{\mathcal{A}}(\E)^G=\E$ then
 the action of $G$ on $J$ is transitive; in particular, for each $j,j^{\prime}
 \in J$
  there is a $k_0$-linear field isomorphism
$F_j \cong F_{j^{\prime}}$ that extends to an isomorphism of $k_0$-algebras $\mathcal{A}_j \cong \mathcal{A}_{j^{\prime}}$.
In particular, positive integers
$$\mathbf{e}_{\E}=[F_j:\E],  \ \mathbf{d}_{\E}=\sqrt{\dim_{F_j}(\mathcal{A}_j )}$$
do not depend on a choice of $j$ and
$$[k:k_0] d_{\mathcal{A}}=|J| \mathbf{e}_{\E}  \mathbf{d}_{\E} [\E:k_0].$$
Here $|J|$ is the cardinality of $J$.

\item[(iv)]
If $\mathcal{Z}_{\mathcal{A}}(\E)^G=\E$ and
 $G$ does not contain a proper subgroup with finite index dividing $([k:k_0] d_{\mathcal{A}})/ [\E:k_0]$    then $J$ is a singleton,
$k\E$ is a field and $\mathcal{Z}_{\mathcal{A}}(\E)$  is a central simple $k\E$-algebra.

\item[(v)]
If  $\mathcal{Z}_{\mathcal{A}}(\E)^G=\E$ and
$k\E$ is a field then $k\E/\E$ is a finite Galois field extension, whose degree $[k\E:\E]$   divides  $([k:k_0] d_{\mathcal{A}})/[\E:k_0]$.
In addition, $\rho_k$ induces the surjective group homomorphism
$$\rho_{k\E}: G \twoheadrightarrow  \Gal(k\E/\E).$$
In particular, if $G$ does not admit a proper normal subgroup with finite index dividing  $([k:k_0] d_{\mathcal{A}})/[\E:k_0]$  then $k\E=\E$, i.e.,
$\E$ contains $k$.
\end{itemize}
\end{thm}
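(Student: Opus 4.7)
The plan is to reduce everything to Theorem \ref{mainalg} combined with a careful analysis of the $G$-action on the idempotent structure of $k\E$.

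Part (i) is immediate: $\E$ is a subfield of the finite-dimensional $k_0$-vector space $\mathcal{A}$, hence finite algebraic over $k_0$, and the divisibility is Theorem \ref{mainalg}(0) applied to $\E$. For (ii), any $k_0$-algebra automorphism of $\mathcal{A}$ preserves the center $k$; since $G$ fixes $\E$ pointwise, the subalgebra $k\E$ generated by $k\cup\E$ is $G$-stable, and $\mathcal{Z}_{\mathcal{A}}(\E)$ is $G$-stable because $\rho(g)$ carries the centralizer of $\E$ to the centralizer of $\rho(g)(\E)=\E$.

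For (iii), the $e_j$ are the primitive (minimal) idempotents of the commutative semisimple $k$-algebra $k\E$, and any ring automorphism of $k\E$ must permute them; this defines $\rho_J$, and the equalities $\rho(g)(F_j)=F_{\rho_J(g)(j)}$ and $\rho(g)(\mathcal{A}_j)=\mathcal{A}_{\rho_J(g)(j)}$ follow from $F_j=e_j\cdot k\E$ and $\mathcal{A}_j=e_j\cdot\mathcal{Z}_{\mathcal{A}}(\E)$. For (iiibis), given any $G$-orbit $O\subset J$, the element $e_O:=\sum_{j\in O}e_j$ is a $G$-fixed idempotent of $\mathcal{Z}_{\mathcal{A}}(\E)$, hence lies in $\mathcal{Z}_{\mathcal{A}}(\E)^G=\E$; since $\E$ is a field, $e_O\in\{0,1\}$, and $\sum_{j\in J}e_j=1$ forces a single orbit, i.e., transitivity. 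Transitivity implies that for each $j,j'\in J$ any $g$ with $\rho_J(g)(j)=j'$ restricts to a $k_0$-algebra isomorphism $F_j\cong F_{j'}$ fixing $\E$ and extending to $\mathcal{A}_j\cong\mathcal{A}_{j'}$, so $\mathbf{e}_{\E}:=[F_j:\E]$ and $\mathbf{d}_{\E}:=\sqrt{\dim_{F_j}(\mathcal{A}_j)}$ are independent of $j$; substituting $[F_j:k_0]=\mathbf{e}_{\E}[\E:k_0]$ into the rank identity $\sum_{j\in J}[F_j:k_0]\mathbf{d}_j=[k:k_0]d_{\mathcal{A}}$ (established inside the proof of Theorem \ref{mainalg}) yields the claimed count.

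Part (iv) follows from (iiibis): the stabilizer of any $j\in J$ has index $|J|$ in $G$, and $|J|$ divides $([k:k_0]d_{\mathcal{A}})/[\E:k_0]$; by hypothesis $|J|=1$, so $k\E$ is a field and $\mathcal{Z}_{\mathcal{A}}(\E)$ is central simple over $k\E$ by Theorem \ref{mainalg}(iv) together with Theorem \ref{mainalg}(iii). For (v), when $k\E$ is a field the $G$-action on $k\E$ factors through the finite group $\Aut_{k_0}(k\E)$, fixes $\E$ pointwise, and since $k\E\subset\mathcal{Z}_{\mathcal{A}}(\E)$ the invariants satisfy $(k\E)^G\subset\mathcal{Z}_{\mathcal{A}}(\E)^G=\E\subset(k\E)^G$, hence $(k\E)^G=\E$. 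Artin's theorem on finite group actions on fields then gives that $k\E/\E$ is Galois with group equal to the image of $G$, so $G\twoheadrightarrow\Gal(k\E/\E)$; the degree $[k\E:\E]$ divides $([k:k_0]d_{\mathcal{A}})/[\E:k_0]$ by (iiibis) applied to the singleton $J$, and the kernel of the surjection is a normal subgroup of $G$ of that index, so the hypothesis on $G$ forces $[k\E:\E]=1$, i.e., $k\E=\E$ and $\E\supset k$. The main obstacle is the transitivity argument in (iiibis): converting the algebraic condition $\mathcal{Z}_{\mathcal{A}}(\E)^G=\E$ into the combinatorial statement via the orbit-sum idempotent trick, and then marrying it with the rank identity from Theorem \ref{mainalg} to get the divisibility bound that powers (iv) and (v).
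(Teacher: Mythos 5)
Your proof is correct and follows essentially the same route as the paper's: part (i) via Theorem \ref{mainalg}(0), part (iii) by permutation of the primitive idempotents (equivalently, the minimal two-sided ideals of $\mathcal{Z}_{\mathcal{A}}(\E)$), part (iiibis) by the orbit-sum idempotent trick ($e_O\in\mathcal{Z}_{\mathcal{A}}(\E)^G=\E$ a field forces $e_O\in\{0,1\}$) together with the rank identity from inside the proof of Theorem \ref{mainalg}, and part (v) via Artin's theorem on finite group actions producing a Galois extension. The only cosmetic difference is that you permute idempotents of $k\E$ first and then deduce the action on the $\mathcal{A}_j$, whereas the paper starts from the minimal two-sided ideals $\mathcal{A}_j$ and then passes to their centers and identity elements; these are equivalent.
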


\begin{proof}

(i) follows from  the inclusion $k_0 \subset \E$ and Theorem \ref{mainalg}(0).

(ii) is obvious.

Let us prove (iii). The set $\{\mathcal{A}_j\mid j\in J\}$ is the set of (nonzero) minimal two-sided ideals of $\mathcal{A}$. Therefore  $G$ permutes elements of this set, i.e,
 there is the group  homomorphism
$$\rho_J: G \to \mathrm{Perm}(J)$$
of $G$ into the group $ \mathrm{Perm}(J)$ of permutations of $J$  such that if  $g\in G$ and $\rho_J(g)(j)=j^{\prime}$ then
$\rho(g)(\mathcal{A}_j)=\mathcal{A}_{j^{\prime}}$. Since $F_j$ (resp. $F_{j^{\prime}}$ ) is the center of  $\mathcal{A}_j$ (resp. of  $\mathcal{A}_{j^{\prime}}$)
with identity element $e_j$ (resp. $e_{j^{\prime}}$),
\begin{equation}
\label{Gact}
\rho(g)(F_j)=F_{j^{\prime}}, \  \rho(g)(e_j)=e_{j^{\prime}}.
\end{equation}

Let us prove (iiibis).
We need to check  the transitivity of the $G$-action on $J$. Notice that for each nonempty $G$-invariant subset $T\subset J$ the sum
$e_T=\sum_{j\in T}e_j$ is a  nonzero  element of $\mathcal{A}$  that is $G$-invariant, thanks to\eqref{Gact}. This implies
that $e_T$ is a nonzero element of $\mathcal{Z}_{\mathcal{A}}(\E)^G=\E$.
If the action onf $G$ on $J$ is {\sl not} transitive then $J$ is not a singleton and there exist two disjoint $G$-orbits $T_1,T_2 \subset J$. It follows from \eqref{idemp} that
$e_{T_1}e_{T_2}=0$. Since both factors are nonzero elements of the {\sl field} $\E$, we get a desired contradiction that proves  the transitivity. This proves (iiibis).

(iv) follows readily from the transitivity of the $G$-action on $J$.

Let us prove (v). So,  $k\E$ be a field. Then $k\E/\E$ is a finite algebraic field extension and it follows from Theorem \ref{mainalg}(0) that
 $[k\E:\E]$   divides  $([k:k_0] d_{\mathcal{A}})/[\E:k_0]$.
 Clearly, $k\E$ is $G$-stable and the subfield $(k\E)^G$ of its $G$-invariants coincides with $\E$. This gives us the natural group homomorphism
 $$\rho_{k\E}: G \to \Aut(k\E/\E),$$
 whose image $H:=\rho_{k\E}(G)\subset  \Aut(k\E/\E)$ is a finite group (whose order does not exceed $[k\E:\E]$. Since the subfield of $H$-invariants
 $$(k\E)^H=(k\E)^G=\E,$$
 the order of $H$ coincides with $[k\E:\E]$, the field extension $k\E/\E$ is Galois  with Galois group $H$.  Since the group homomorphism
$\rho_{k\E}: G \to H$ is surjective,  its kernel  $\ker(\rho_{k\E})$  is a normal subgroup in $G$ of index $[k\E:\E]$. This implies that $\ker(\rho_{k\E})$ is a normal subgroup of $G$, whose index
divides $([k:k_0] d_{\mathcal{A}})/[E:k_0]$. Therefore, if $G$ does not admit a proper normal subgroup with finite index dividing  $([k:k_0] d_{\mathcal{A}})/[E:k_0]$  then
$G= \ker(\rho_{k\E})$ and therefore $[k\E:\E]=1$, i.e., $k\E=\E$, which means that $\E$ contains $k$.

\end{proof}
\end{sect}

\begin{sect}
In this subsection we assume that $\mathfrak{A}$ is a semisimple finite-dimensional algebra over a field $k_0$ of characteristic zero. Then $\mathfrak{A}$
splits into a finite direct
sum
$$\mathfrak{A}= \oplus_{s\in \mathfrak{I}(\mathfrak{A})} \mathcal{A}_s$$
 of simple $k_0$-algebras $\mathcal{A}_s$. (Here the finite nonempty set  $\mathfrak{I}(\mathfrak{A})$ is identified with the
 set of (nonzero) minimal two-sided ideals in $\mathfrak{A}$.)

 \begin{ex}
 \label{endX}
 If $k_0=\Q$ and $\mathfrak{A}=\End^0(X)$ then $ \mathfrak{I}(\End^0(X))=\I(X)$.
 \end{ex}

 Let $G$ be a group and
$$\rho: G \to \Aut_{k_0}(\mathfrak{A})$$
be a group homomorphism.  Clearly, $\rho$ induces the action of $G$ on $\mathfrak{I}(\mathfrak{A})$ such that
$$\rho(g)\mathcal{A}_s=\mathcal{A}_{gs} \ \forall g \in G, \ s \in \mathfrak{I}(\mathfrak{A}).$$
Let $\E$ be a subfield of $\mathfrak{A}$ that contains $k_0$ and lies in the subalgebra  $\mathfrak{A}^G$ of $G$-invariants.
Then the centralizer $\mathcal{Z}_{\mathfrak{A}}(\E)$ of $\E$ in  $\mathfrak{A}$ is $G$-stable.

\begin{lem}
\label{transA}
Let us assume that the subalgebra $\mathcal{Z}_{\mathfrak{A}}(\E)^G$ of $G$-invariants of $\mathcal{Z}_{\mathfrak{A}}(\E)$  is a field.  Then the action of $G$ on $\mathfrak{I}(\mathfrak{A})$  is transitive.
 In particular,   simple $k_0$-algebras  $\mathcal{A}_s$ and  $\mathcal{A}_t$ are isomorphic for each pair $s,t \in \I(\mathfrak{A})$.
\end{lem}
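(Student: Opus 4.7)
The plan is to mimic, in this slightly more general setting, the idempotent argument that was used in the proof of Theorem \ref{invariants}(iiibis). The assumption about $\mathcal{Z}_{\mathfrak{A}}(\E)^G$ being a field is there precisely to play the role of integrality, so the proof should be a short application of central-idempotent decompositions.

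First I would recall that for each $s \in \mathfrak{I}(\mathfrak{A})$, the minimal two-sided ideal $\mathcal{A}_s$ is a simple $k_0$-algebra with its own identity $e_s$, and these $e_s$ are mutually orthogonal central idempotents of $\mathfrak{A}$ summing to $1$. Since each $e_s$ is central in $\mathfrak{A}$, it automatically commutes with $\E$, so $\{e_s\}_{s\in\mathfrak{I}(\mathfrak{A})} \subset \mathcal{Z}_{\mathfrak{A}}(\E)$. The induced $G$-action on $\mathfrak{I}(\mathfrak{A})$ then satisfies $\rho(g)(e_s)=e_{gs}$ for all $g\in G$ and $s\in \mathfrak{I}(\mathfrak{A})$, because $\rho(g)$ is a $k_0$-algebra automorphism sending $\mathcal{A}_s$ to $\mathcal{A}_{gs}$ and hence sending identity to identity.

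The main step is the transitivity argument. For any nonempty $G$-stable subset $T \subset \mathfrak{I}(\mathfrak{A})$, the element $e_T := \sum_{s\in T} e_s$ is nonzero, idempotent, and $G$-invariant, so it lies in $\mathcal{Z}_{\mathfrak{A}}(\E)^G$. If the $G$-action on $\mathfrak{I}(\mathfrak{A})$ were not transitive, we would obtain two disjoint nonempty $G$-orbits $T_1, T_2 \subset \mathfrak{I}(\mathfrak{A})$, producing two nonzero elements $e_{T_1}, e_{T_2}\in \mathcal{Z}_{\mathfrak{A}}(\E)^G$ with $e_{T_1}e_{T_2}=0$ (by orthogonality of the $e_s$). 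Since a field has no zero divisors, this contradicts the hypothesis that $\mathcal{Z}_{\mathfrak{A}}(\E)^G$ is a field, establishing the transitivity.

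For the final ``in particular'' assertion, I would note that once transitivity is known, for any $s,t\in \mathfrak{I}(\mathfrak{A})$ there exists $g\in G$ with $\rho(g)(\mathcal{A}_s)=\mathcal{A}_t$; restricting the $k_0$-algebra automorphism $\rho(g)$ of $\mathfrak{A}$ to the two-sided ideal $\mathcal{A}_s$ yields the desired $k_0$-algebra isomorphism $\mathcal{A}_s \cong \mathcal{A}_t$. I do not foresee a real obstacle here; the only mild subtlety is verifying that the $e_s$ actually belong to $\mathcal{Z}_{\mathfrak{A}}(\E)^G$ once $G$ acts and $\E$ is fixed, but this is immediate from centrality of the $e_s$ together with the already noted formula $\rho(g)(e_s)=e_{gs}$ and the fact that $e_T$ is a $G$-invariant sum.
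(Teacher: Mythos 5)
Your proof is correct and follows essentially the same route as the paper: the identical idempotent/orbit argument with the central primitive idempotents $e_s$, the $G$-invariant sums $e_T$, and the zero-divisor contradiction in the field $\mathcal{Z}_{\mathfrak{A}}(\E)^G$. The only minor addition is that you spell out the ``in particular'' step explicitly, which the paper leaves to the reader.
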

\begin{proof}
We use the same idea as in the proof of Theorem \ref{invariants}(iii).
Let
$$e_s \in \mathcal{A}_t\subset  \sum_{t\in \mathfrak{I}(\mathfrak{A})} \mathcal{A}_t=\mathfrak{A}$$
be the identity element of $\mathcal{A}_s$.  Clearly,  $e_s$ lies in the center of  $\mathfrak{A}$ and
$$\rho(g)e_s=e_{gs} \ \forall  g \in G, \ s \in \I(\mathfrak{A}).$$
It is also clear that $e_s e_t=0$ for distinct elements $s$ and $t$ of  $\I(\mathfrak{A})$.
Notice that for each nonempty $G$-invariant subset $T\subset  \mathfrak{I}(\mathfrak{A})$ the sum
$e_T=\sum_{t\in T}e_t$ is a  nonzero  {\sl central} element of $\mathfrak{A}$  that is $G$-invariant. This implies
that $e_T$ is a nonzero element of $\mathcal{Z}_{\mathfrak{A}}(E)^G$.
If the action on $G$ on $\I(\mathfrak{A})$ is {\sl not} transitive then $J$ there exist two disjoint $G$-orbits $T_1,T_2 \subset \mathfrak{I}(\mathfrak{A})$. Clearly,
$e_{T_1}e_{T_2}=0$. Since both factors are nonzero  elements of the {\sl field}  $\mathcal{Z}_{\mathfrak{A}}(E)^G$, we get a desired contradiction that proves  the transitivity.
\end{proof}

\begin{cor}
\label{commE}
We keep the notation and assumptions of Lemma \ref{transA}.
Suppose that $K_a$ is an algebraically closed field of characteristic $0$ that contains $k_0$ and we are given a nonempty family $\{\mathcal{M}_{\tau}\mid \tau \in \Sigma\}$
of finite-dimensional $K_a$-vector spaces $\mathcal{M}_{\tau}$ that enjoy the following properties.

\begin{itemize}
\item[(i)]
Not all $\mathcal{M}_{\tau}= \{0\}$.
\item[(ii)]
For each $\tau\in \Sigma$ we are given a homomorphism of $k_0$-algebras
$$\mathcal{Z}_{\mathfrak{A}}(\E) \to \End_{K_a}(\mathcal{M}_{\tau})$$
that sends $1$ to the identity automorphism of $\mathcal{M}_{\tau}$.
\end{itemize}


If the  largest common divisor of all  $\dim_{K_a}(\mathcal{M}_{\tau})$ is $1$ then
$\mathcal{Z}_{\mathfrak{A}}(\E) $  is a finite-dimensional semisimple commutative $\E$-algebra, which is either a field or isomorphic
to a direct sum of finitely many copies of the same field.
\end{cor}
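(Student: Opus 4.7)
The plan is to exploit the gcd hypothesis on the $\dim_{K_a}(\mathcal{M}_\tau)$ in order to force the simple factors of $\mathcal{Z}_{\mathfrak{A}}(\E)$ to coincide with their centers, and to use Lemma \ref{transA} (applied to $\mathcal{Z}_{\mathfrak{A}}(\E)$ itself, rather than to $\mathfrak{A}$) to identify those centers with one another.

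First I would appeal to Theorem \ref{ssCenter} to conclude that $\mathcal{Z}_{\mathfrak{A}}(\E)$ is a semisimple $k_0$-algebra, and write its Wedderburn decomposition
$$\mathcal{Z}_{\mathfrak{A}}(\E)=\bigoplus_{j\in J}\mathcal{B}_j,$$
where each $\mathcal{B}_j$ is a simple $k_0$-algebra with center a field $F_j\supset k_0$; set $d_j=\sqrt{\dim_{F_j}(\mathcal{B}_j)}$. Next, I apply Lemma \ref{transA} with the ambient semisimple $k_0$-algebra taken to be $\mathcal{Z}_{\mathfrak{A}}(\E)$ itself, with the restricted $G$-action and with $\E$ as the $G$-invariant subfield. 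The centralizer of $\E$ inside $\mathcal{Z}_{\mathfrak{A}}(\E)$ is all of $\mathcal{Z}_{\mathfrak{A}}(\E)$ (by the very definition of the centralizer), so the hypothesis of that lemma reduces to $\mathcal{Z}_{\mathfrak{A}}(\E)^G$ being a field, which is exactly what we are given. Hence $G$ acts transitively on $J$, and consequently all the $\mathcal{B}_j$ are pairwise $k_0$-algebra isomorphic; in particular the fields $F_j$ are $k_0$-isomorphic, and all $d_j$ equal a common positive integer $d$.

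The crucial step is scalar extension to $K_a$. Since $\fchar(k_0)=0$, each $F_j/k_0$ is separable, hence $F_j\otimes_{k_0}K_a\cong K_a^{[F_j:k_0]}$, and therefore $\mathcal{B}_j\otimes_{k_0}K_a$ is a direct sum of $[F_j:k_0]$ copies of the matrix algebra $\M_d(K_a)$. It follows that $\mathcal{Z}_{\mathfrak{A}}(\E)\otimes_{k_0}K_a$ is a finite direct sum of copies of $\M_d(K_a)$. Each given $k_0$-algebra homomorphism $\mathcal{Z}_{\mathfrak{A}}(\E)\to\End_{K_a}(\mathcal{M}_\tau)$ lands in a $K_a$-algebra and therefore extends to a $K_a$-algebra homomorphism from $\mathcal{Z}_{\mathfrak{A}}(\E)\otimes_{k_0}K_a$; thus each $\mathcal{M}_\tau$ is a finite-dimensional module over a finite direct sum of copies of $\M_d(K_a)$. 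Every simple module over $\M_d(K_a)$ has $K_a$-dimension $d$, so $\dim_{K_a}(\mathcal{M}_\tau)$ is a non-negative integer multiple of $d$ for every $\tau$.

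The gcd hypothesis then forces $d=1$. Hence each simple factor $\mathcal{B}_j$ coincides with its center $F_j$ and is a field; combining this with the transitivity established in Step 2, $\mathcal{Z}_{\mathfrak{A}}(\E)$ is commutative semisimple and is a direct sum of finitely many copies of a single field (a field outright when $|J|=1$), which is precisely the claim. The step I expect to require the most care is the base-change argument: one must verify that the tensor product $\mathcal{Z}_{\mathfrak{A}}(\E)\otimes_{k_0}K_a$ is indeed a direct sum of $\M_d(K_a)$'s of \emph{one and the same} size $d$, so that the divisibility of every $\dim_{K_a}(\mathcal{M}_\tau)$ by $d$ is unambiguous; once the transitivity of the $G$-action on $J$ is in hand, this is straightforward, but it is the place where the characteristic-zero assumption is silently used.
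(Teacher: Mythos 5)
Your argument is correct and follows the paper's own proof in all essential respects: apply Lemma \ref{transA} to $\mathcal{Z}_{\mathfrak{A}}(\E)$ itself (noting that its centralizer of $\E$ is the whole algebra, so the hypothesis reduces to $\mathcal{Z}_{\mathfrak{A}}(\E)^G$ being a field) to get isomorphic simple Wedderburn factors of a common index $d$, then base-change to $K_a$ to conclude every $\dim_{K_a}(\mathcal{M}_\tau)$ is divisible by $d$, and invoke the gcd hypothesis to force $d=1$. The only cosmetic difference is that the paper phrases the intermediate step over $\E$ rather than over $k_0$, which changes nothing.
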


\begin{proof}

Applying Lemma \ref{transA} to the semisimple $\E$-algebra $\mathcal{Z}_{\mathfrak{A}}(\E)$ (instead of the
$k_0$-algebra $\mathfrak{A}$), we obtain that $\mathcal{Z}_{\mathfrak{A}}(\E)$ is isomorphic to a direct sum
of copies of a certain finite-dimensional simple $\E$-algebra say, $\mathcal{B}$. The center $F$ of $\mathcal{B}$
is an overfield of $\mathcal{E}$ and the field extension $F/\E$ is finite algebraic. As
 usual,
$$d_{\mathcal{B}}=\sqrt{\dim_F(\mathcal{B})}$$
is a positive integer.  This implies that the tensor product $\mathcal{B}\otimes_{k_0}K_a$ is isomorphic as a $K_a$-algebra
to a direct sum of $[\E:k_0]$ copies of the matrix algebra $\M_{d_{\mathcal{B}}} (K_a)$ of size $d_{\mathcal{B}}$ over $K_a$. This implies
that $\mathcal{Z}_{\mathfrak{A}}\E) \otimes_{k_0}K_a$ is isomorphic  as a $K_a$-algebra
to a direct sum of  copies of $\M_{d_{\mathcal{B}}} (K_a)$. On the other hand,
each  $\mathcal{M}_{\tau}$ carries the natural structure of
$\mathcal{Z}_{\mathfrak{A}}\E) \otimes_{k_0}K_a$-module. Since the $K_a$-dimension of every finite-dimensional
$\M_{d_{\mathcal{B}}} (K_a)$-module is divisible by $d_{\mathcal{B}}$, all $\dim_{K_a}(\mathcal{M}_{\tau})$
are divisible by  $d_{\mathcal{B}}$. This implies that  $d_{\mathcal{B}}=1$, i.e.,
$\mathcal{B}=F$ is a field.
\end{proof}

\end{sect}

\section{Abelian varieties and centralizers}
\label{abelianMult}
In this section we are going to prove Theorems \ref{SCenter}, \ref{boundDim} and \ref{bigDim}.
We will use Theorem  \ref{mainalg} in order to prove  Theorem \ref{inequality} below  that is a special case of these Theorems.  Later we deduce from Theorem \ref{inequality} the general case.

\begin{thm}
\label{inequality}
Suppose that $Y$ is a positive-dimensional abelian variety over $K_a$ that enjoys the following equivalent properties.

\begin{itemize}
\item[(a)]
$\End^0(Y)$ is a simple $\Q$-algebra.
\item[(b)]
The center $C_Y$ of $\End^0(Y)$ is a number field and
$\End^0(Y)$ is a central simple algebra over $C_Y$.
\item[(c)]
There exists a simple abelian variety $Z$ over $K_a$ such that $Y$ is isogenous over $K_a$
to a self-product of $Z$.
\end{itemize}
Let $E$ be a number field and $i: E \hookrightarrow \End^0(Y)$  be a $\Q$-algebra embedding. Then the $E$-algebra $\End^0(Y,i)$ enjoys the following properties.
\begin{enumerate}
\item[(i)]
$\End^0(Y,i)$ is semisimple.
\item[(ii)]
$\End^0(Y,i)$ is simple if and only if $i(E)C_Y$ is a field
\footnote{Last sentences of \cite[Remark 4.1]{ZarhinL} and  \cite[Remark 3.1]{ZarhinMZ2} wrongly assert the simplicity of  $\End^0(Y,i)$ without assuming that $i(E)C_Y$ is a field. The mistake was caused by improper use of \cite[Theorem 4.3.2 on p. 104]{Herstein}.}
. (E.g., $C_Y\subset E$ or $E\subset C_Y$ or number fields $E$ and $C_Y$ are linearly disjoint over $\Q$.)
If this is the case then $\End^0(Y,i)$ is a central simple algebra over the field $i(E)C_Y$.
\item[(iii)]
$$\dim_E(\End^0(Y,i)) \le \left(\frac{2\dim(Y)}{[E:\Q]}\right)^2.$$
\item[(iv)]
The equality
$$\dim_E(\End^0(Y,i)) = \left(\frac{2\dim(Y)}{[E:\Q]}\right)^2$$
holds if and only if
$$\dim_{C_Y}(\End^0(Y))=\left(\frac{2\dim(Y)}{[C_Y:\Q]}\right)^2$$
and $E$ contains $C_Y$.

\end{enumerate}
\end{thm}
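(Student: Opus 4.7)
My strategy is to apply Theorem \ref{mainalg} to the central simple $C_Y$-algebra $\mathcal{A} := \End^0(Y)$ with $(k_0, k, \mathcal{E}) = (\Q, C_Y, i(E))$, and then couple the resulting dimension bound with the classical Albert--Shimura divisibility from the structure theory of endomorphism algebras of abelian varieties.

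First I would confirm the equivalence of (a), (b), (c). The equivalence (a)$\Leftrightarrow$(b) is immediate from general algebra: a finite-dimensional semisimple $\Q$-algebra is simple if and only if its center is a field, and such a simple algebra is automatically central simple over its center. The equivalence (b)$\Leftrightarrow$(c) is recorded in Section \ref{split0} (via Lemma \ref{remL}). Under any of these equivalent forms, $\mathcal{A} = \End^0(Y)$ is a central simple algebra over the number field $C_Y$, so the hypotheses of Theorem \ref{mainalg} are satisfied. By construction, $\mathcal{Z}_{\mathcal{A}}(i(E))$ coincides with $\End^0(Y,i)$.

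Parts (i) and (ii) are then read off from Theorem \ref{mainalg}. Part (ii) of that theorem gives semisimplicity of $\End^0(Y,i)$; part (iii) identifies its center as $i(E)C_Y$; and part (iv) gives the simplicity criterion that $i(E)C_Y$ be a field, in which case $\End^0(Y,i)$ is a simple algebra with field center $i(E)C_Y$, hence central simple over $i(E)C_Y$. Each of the three sufficient conditions listed in (ii) makes $i(E)C_Y$ a field: if $C_Y \subset i(E)$ or $i(E)\subset C_Y$ then the compositum equals $i(E)$ or $C_Y$ respectively; and if $E$ and $C_Y$ are linearly disjoint over $\Q$, then $E\otimes_{\Q}C_Y$ is already a field and $i(E)C_Y$ is one of its quotients.

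For (iii), Theorem \ref{mainalg}(v) applied to our data gives
$$\dim_E(\End^0(Y,i)) \;=\; \dim_{i(E)}\bigl(\mathcal{Z}_{\mathcal{A}}(i(E))\bigr) \;\le\; \left(\frac{d_{\mathcal{A}}\,[C_Y:\Q]}{[E:\Q]}\right)^{\!2},$$
where $d_{\mathcal{A}} = \sqrt{\dim_{C_Y}(\End^0(Y))}$. It therefore suffices to prove $d_{\mathcal{A}}\,[C_Y:\Q] \le 2\dim(Y)$. Writing $Y$ as isogenous to $Z^r$ with $Z$ simple over $K_a$, we have $D := \End^0(Z)$ a division algebra of center $C_Y$ and Schur index $d := \sqrt{[D:C_Y]}$, while $\End^0(Y) \cong M_r(D)$ and $\dim(Y) = r\dim(Z)$. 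Hence $d_{\mathcal{A}} = rd$, and the classical Albert--Shimura divisibility $d\,[C_Y:\Q] \mid 2\dim(Z)$ (see, e.g., \cite[\S19]{MumfordAV} and \cite[Prop.~2]{Shimura2}) yields $d_{\mathcal{A}}\,[C_Y:\Q] \mid 2\dim(Y)$, in particular the desired inequality.

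For (iv), equality in (iii) forces simultaneous equality in both of the inequalities above. Equality $d_{\mathcal{A}}\,[C_Y:\Q] = 2\dim(Y)$ squares to the condition $\dim_{C_Y}(\End^0(Y)) = (2\dim(Y)/[C_Y:\Q])^2$, and equality in Theorem \ref{mainalg}(v) is characterized by Theorem \ref{mainalg}(vi) as $i(E) \supset C_Y$, i.e.\ $E$ contains $C_Y$ via $i$. Conversely, if both conditions hold, all inequalities collapse to equalities and (iv) follows. The main delicate step is the correct invocation of the Albert--Shimura divisibility, which ultimately rests on the existence of a positive Rosati involution on $\End^0(Y)$ coming from a polarization, and needs to be quoted in a form valid in arbitrary characteristic.
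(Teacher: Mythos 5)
Your proposal is correct and follows essentially the same route as the paper's proof: both derive (i)--(ii) from the centralizer/bicentralizer theory (the paper cites Theorems~\ref{herst} and~\ref{ssCenter} directly, you route them through Theorem~\ref{mainalg}(ii)--(iv), which is proved from those), and both derive (iii)--(iv) by coupling Theorem~\ref{mainalg}(v,vi) with the Albert--Shimura divisibility $d(Z)\,[C_Z:\Q]\mid 2\dim(Z)$, exactly as in Remark~\ref{CM}(i). The only discrepancy is your citation of \cite[\S 19]{MumfordAV}; the paper refers to \cite[Sect.~21]{MumfordAV} for Albert's classification.
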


\begin{rem}
\label{CM}
\begin{itemize}
\item[(i)]
Suppose that $Y$ satisfies the equivalent conditions (a),(b),(c) of Theorem \ref{inequality}.
This means that there are a simple abelian variety $Z$ over $K_a$ and a positive integer $r$ such that
$Y$ is isogenous to $Z^r$ over $K_a$.
In addition, $\End^0(Z)$  is a central division $C_Y$-algebra
and $\End^0(Y)$ is isomorphic to the matrix algebra $\M_r(\End^0(Z))$ of size $r$ over $\End^0(Z)$; in particular, fields $C_Y$ and $C_Z$ are isomorphic. We have
$$\dim(Y)=r \cdot \dim(Z), \ \dim_{C_Y}(\End^0(Y))=r^2 \dim_{C_Z}(\End^0(Z)).$$
Recall that the number $$d(Z):=\sqrt{ \dim_{C_Z}(\End^0(Z))}$$ is a positive integer.

 It follows from Albert's classification \cite[Sect. 21]{MumfordAV} that
$d(Z)\cdot [C_Z:\Q]$ divides $2\dim(Z)$. This implies that
$$r\cdot d(Z)\cdot [C_Z:\Q]=\sqrt{ \dim_{C_Y}(\End^0(Y))}\cdot [C_Z:\Q],$$
which divides $2r\cdot\dim(Z)=2\dim(Y)$.
Now if we put
$$k_0=\Q, \ k=C_Y, \ \mathcal{A}=\End^0(Y)$$
then
$$[k:k_0]=[C_Y:\Q]=[C_Z:\Q], \ d_{\mathcal{A}}= \sqrt{\dim_{C_Y}(\End^0(Y))}=r \cdot d(Z)$$
and
$$[k:k_0] d_{\mathcal{A}}=[C_Y:\Q] r \cdot d(Z)=[C_Z:\Q] r \cdot d(Z),$$
which
divides $r \cdot 2\dim(Z)=2\dim(Y).$ In particular,
$$[k:k_0] d_{\mathcal{A}} \le 2\dim(Y);$$
the equality holds if and only if
$$d(Z)\cdot [C_Z:\Q]=2\dim(Z).$$
Notice that this equality is equivalent to
$$
\dim_{C_Z}(\End^0(Z))=\left(\frac{2\dim(Z)}{[C_Z:\Q]}\right)^2,$$
which, in turn, is equivalent to
\begin{equation}
\label{maxCenter}
\dim_{C_Y}(\End^0(Y))=\left(\frac{2\dim(Y)}{[C_Y:\Q]}\right)^2.
\end{equation}


 \item[(ii)]
Now assume that \eqref{maxCenter} holds.
We have
$$r\cdot d(Z)=\frac{2\dim(Y)}{[C_Y:\Q]}.$$

Let $E$ be a subfield of $\End^0(Y)$ that contains $C_Y$ and $i: E \hookrightarrow \End^0(Y)$  be the inclusion map.
 It follows from Theorems \ref{ssCenter} and \ref{herst} applied to  $\mathcal{E}=i(E)$
that $\End^0(Y,i)$  is a central simple $E$-algebra and
$$\dim_{C_Y}(\End^0(Y))=[E:C_Y]\cdot \dim_{C_Y}(\End^0(Y,i)).$$
This implies that
$$ \dim_E(\End^0(Y,i))=\frac{ \dim_{C_Y}(\End^0(Y,i))}{[E:C_Y]}=\frac{\dim_{C_Y}(\End^0(Y))}{[E:C_Y]^2}=$$
$$\frac{(2\dim(Y))^2}{[C_Y:\Q]^2 [E:C_Y]^2}=
\left(\frac{2\dim(Y)}{[E:\Q]}\right)^2.$$
\item[(iii)]
For example, let $F$ be a (maximal) subfield of $\End^0(Z)$ such that
$$C_Z\subset F, \ [F:C_Z]=d(Z)$$
and let $L/C_Z$ be a degree $r$ field extension that is linearly disjoint with $F$. Then
$E:=F\otimes_{C_Z}L$ is an overfield of $C_Z$ and
$$[E:\Q]=[E:C_Z]\cdot [C_Z:\Q]=[F:C_Z] \cdot [L:C_Z] \cdot  [C_Z:\Q]=r\cdot d(Z) \cdot [C_Z:\Q]=$$
$$\frac{2\dim(Y)}{[C_Y:\Q]}\cdot [C_Y:\Q]=2\dim(Y).$$
Let us fix an embedding
$$i_0: L \hookrightarrow \M_r(C_Y)\subset \M_r(\End^0(Z))$$
that sends $1$ to $1$. Then
$$E=F\otimes_{C_Z}L \to \M_r(\End^0(Z)), \ f\otimes l \mapsto f \cdot i_0(l)$$
is a $C_Z$-algebra homomorphism that sends $1$ to $1$. Since $E$ is a field, this homomorphism is an embedding.
It follows that $\M_r(\End^0(Z))$ contains a number field of degree $2\dim(Y)$. Since $\M_r(\End^0(Z)) \cong \End^0(Y)$, the algebra
$\End^0(Y)$ contains a number field of degree $2\dim(Y)$, i.e., $Y$ is an abelian variety of CM type over $K_a$.
\end{itemize}
\end{rem}

\begin{proof}[Proof of Theorem \ref{inequality}]
Assertions (i) and (ii) follow from Theorems \ref{herst} and \ref{ssCenter}.

In order to  prove (iii) and (iv) let us put (as in Remark \ref{CM}(i))
$$k_0=\Q, \ k=C_Y, \ \mathcal{A}=\End^0(Y).$$
Then
$$[k:k_0]=[C_Y:\Q]=[C_Z:\Q],  d_{\mathcal{A}}= \sqrt{\dim_{C_Y}(\End^0(Y))}=r\cdot d(Z)$$
and
 according to  Remark \ref{CM}(i)
$$[k:k_0] d_{\mathcal{A}} \le 2\dim(Y).$$
Now the desired result follows from Theorem \ref{mainalg}(v,vi).
\end{proof}

\begin{sect}
\label{split1}

Let $X$ be an arbitrary positive-dimensional abelian variety over $K_a$.
In this subsection we use the notation of Subsection \ref{split0}.

Let $E$ be a number field and $i: E \hookrightarrow \End^0(X)$  be a $\Q$-algebra embedding that sends $1$ to $1_X$. Then the $E$-algebra $\End^0(X,i)$ enjoys the following properties.
Let $s \in \I(X)$ and
$$\pr_s:\End^0(X) = \sum_{s\in \I(X)} D_s\twoheadrightarrow D_s$$ be the corresponding
projection map. Clearly, $\pr_s i(E)\cong E$. We write
  $D_{s,E}$ for the centralizer of $\pr_s i(E)$ in $D_s$. One may easily check that
$\End^0(X,i)=\prod_{s\in\I(X)} D_{s,E}$.
We write $i_s$ for the composition $ \pr_s i: E \hookrightarrow
\End^0(X)\twoheadrightarrow D_{s} = \End^0(X_s)$. Clearly,
$$i_s(1)=e_s= 1_{X_s}, \ D_{s,E}=\End^0(X_s,i_s), \  \End^0(X,i)=\oplus_{s\in \I(X)}\End^0(X_s,i_s).$$
In particular, the ratio
$$d_{X_s,E}=\frac{2\dim(X_s)}{[E:\Q]}$$
is a positive integer, i.e., $[E:\Q]$ divides $2\dim(X_s)$.
\end{sect}

\begin{thm}
\label{inequality2}
Suppose that $X$ is a positive-dimensional abelian variety over $K_a$.

Let $E$ be a number field and $i: E \hookrightarrow \End^0(X)$  be a $\Q$-algebra embedding that sends $1$ to $1_X$.
Then the $E$-algebra $\End^0(X,i)$ enjoys the following properties.
\begin{enumerate}
\item[(i)]
$\End^0(X,i)$ is a semisimple.
\item[(ii)]
$\End^0(X,i)$ is simple if and only if $C_X$ is a field and $i(E)C_X$ is a field.
If this is the case then $\End^0(X,i)$ is a central simple algebra over the field $i(E)C_X$.
\item[(iii)]
$$\dim_E(\End^0(X,i)) \le \left(\frac{2\dim(X)}{[E:\Q]}\right)^2.$$
\item[(iv)]
the equality
$$\dim_E(\End^0(X,i)) = \left(\frac{2\dim(X)}{[E:\Q]}\right)^2$$
holds if and only if $C_X$ is a field,
$$\dim_{C_X}(\End^0(X))=\left(\frac{2\dim(X)}{[C_X:\Q]}\right)^2$$
and $E$ contains $C_X$.
\end{enumerate}
\end{thm}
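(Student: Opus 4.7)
The plan is to reduce Theorem \ref{inequality2} to Theorem \ref{inequality} via the isotypic decomposition of Subsection \ref{split1}. Up to a $K_a$-isogeny, $X$ splits as $\prod_{s\in \I(X)} X_s$, where each $X_s$ is a positive-dimensional abelian variety over $K_a$ with $\End^0(X_s)\cong D_s$ simple; in particular, each $X_s$ satisfies the equivalent hypotheses (a)--(c) of Theorem \ref{inequality}. Moreover, the embedding $i$ decomposes as the sum of $i_s:E\hookrightarrow \End^0(X_s)$, and Subsection \ref{split1} provides the $E$-algebra decomposition
$$\End^0(X,i)=\bigoplus_{s\in \I(X)} \End^0(X_s,i_s),$$
with $[E:\Q]$ dividing $2\dim(X_s)$ for every $s$.

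For (i), Theorem \ref{inequality}(i) says each summand $\End^0(X_s,i_s)$ is semisimple, and a finite direct sum of semisimple algebras is semisimple. For (ii), a finite direct sum of algebras with identity is simple if and only if the index set is a singleton and that unique summand is simple. Hence $\End^0(X,i)$ is simple if and only if $|\I(X)|=1$ (equivalently, by Subsection \ref{split0}, $C_X$ is a field) and $\End^0(X_s,i_s)$ is simple for the unique $s$. By Theorem \ref{inequality}(ii), the latter is equivalent to $i_s(E)C_{X_s}=i(E)C_X$ being a field, in which case $\End^0(X,i)=\End^0(X_s,i_s)$ is central simple over $i(E)C_X$.

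For (iii), set $a_s:=2\dim(X_s)/[E:\Q]$, a positive integer. Since $\Pi_X$ is an isogeny, $\sum_s a_s=2\dim(X)/[E:\Q]$. Applying Theorem \ref{inequality}(iii) to each $X_s$ and summing,
$$\dim_E(\End^0(X,i))=\sum_{s\in \I(X)}\dim_E(\End^0(X_s,i_s))\le \sum_{s\in \I(X)} a_s^2 \le \Bigl(\sum_{s\in \I(X)} a_s\Bigr)^2=\left(\frac{2\dim(X)}{[E:\Q]}\right)^2,$$
the last inequality holding because all $a_s\ge 0$.

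For (iv), the inequality $\sum a_s^2\le (\sum a_s)^2$ for nonnegative reals is strict whenever at least two $a_s$ are positive; since every $a_s>0$, equality in (iii) forces $|\I(X)|=1$. Then $C_X$ is a field and, up to $K_a$-isogeny, $X=X_s$ for the unique $s$, so equality in (iii) reduces to equality in Theorem \ref{inequality}(iii) for this $X_s$. By Theorem \ref{inequality}(iv) this is equivalent to $E\supset C_X$ together with $\dim_{C_X}(\End^0(X))=(2\dim(X)/[C_X:\Q])^2$; the converse implication is immediate. The only real subtlety is verifying that each $X_s$ is positive-dimensional (so all $a_s>0$, hence the sum-of-squares versus square-of-sum step is strict whenever $|\I(X)|>1$); this follows because $D_s=\End^0(X_s)$ is a nonzero simple algebra, which forces $\dim(X_s)\ge 1$. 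Otherwise the argument is routine bookkeeping on top of Theorem \ref{inequality}.
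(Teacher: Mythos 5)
Your proof is correct and follows essentially the same route as the paper: reduce to the isotypic factors $X_s$ via Subsection \ref{split1}, apply Theorem \ref{inequality} to each $(X_s,i_s)$, and use the elementary fact that for positive integers $\sum_s a_s^2 \le \bigl(\sum_s a_s\bigr)^2$ with equality only when the index set is a singleton.
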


\begin{proof}
We use the notation of Section \ref{split1}.
Applying Theorem \ref{inequality}(i) to each $(X_s,i_s)$, we obtain that
$\End^0(X_s,i_s)$  are semisimple $E$-algebras. This implies that their direct sum
 $\End^0(X,i)$ is also semisimple; if it simple then $\I(X)$ is a singleton, i.e. $C_X$ is a field.
 This proves (i) while (ii) follows readily from Theorem \ref{inequality}(ii).

 Let us prove (iii) and (iv). If $\I$ is a singleton then the desired result is contained in Theorem \ref{inequality}.
 Now assume that $\I$ is {\sl not} a singleton. Applying Theorem \ref{inequality}(iii) to each $(X_s,i_s)$, we obtain that
 $$\dim_E(\End^0(X_s,i_s)) \le \frac{(2\dim(X_s))^2}{[E:\Q]^2},$$
 $$\dim_E(\End^0(X,i)) = \sum_{s\in\I}\dim_E(\End^0(X_s,i_s)) \le \sum_{s\in\I}\frac{(2\dim(X_s))^2}{[E:\Q]^2}.$$
 Since  $\I$ is {\sl not} a singleton and all $\dim(X_s)$ are positive,
$$\sum_{s\in\I}\frac{(2\dim(X_s))^2}{[E:\Q]^2}<\frac{(\sum_{s\in\I}2\dim(X_s))^2}{[E:\Q]^2}=\left(\frac{2\dim(X)}{[E:\Q]}\right)^2.$$
This ends the proof.
\end{proof}

\begin{proof}[Proof of Theorems \ref{SCenter}, \ref{boundDim} and \ref{bigDim}]
 Theorems \ref{inequality} and  \ref{inequality2} combined with Remark \ref{CM} imply readily Theorems \ref{SCenter}, \ref{boundDim} and \ref{bigDim}.
\end{proof}

\begin{proof}[Proof of Theorem \ref{endoE}]
Let us choose fields $F$ and $\mathcal{K}\subset F$ as in Remark \ref{minLambda}. Then
 $$k(\lambda)=\End_{\tilde{G}_{\lambda,X,K}}(X_{\lambda})=\End_{\tilde{G}_{\lambda,X,\mathcal{K}}}(X_{\lambda}).$$
 It follows from Lemma \ref{moriO}
 that
 $\End_{\mathcal{K}}(X,i)=i(\OC)$ and therefore $\End_{\mathcal{K}}^0(X,i)=i(E)$.
 By Remark \ref{minLambda}, $\Gal(F/\mathcal{K})$ acts on $\End(X,i)$ in such a way that
 $$\End(X,i)^{\Gal(F/\mathcal{K})}=\End_{\mathcal{K}}(X,i)=i(O).$$
 Extending the action of  $\Gal(F/\mathcal{K})$ by $\Q$-linearity on $\End(X,i)\otimes\Q$,
 we get the group homomorphism
 $$\Gal(F/\mathcal{K}) \to \Aut_{\Q}(\End(X,i)\otimes\Q)=\Aut_{\Q}(\End^{0}(X,i))$$
 such that the subalgebra of $\Gal(F/\mathcal{K})$-invariants
 $$(\End^{0}(X,i))^{\Gal(F/\mathcal{K}) }=(\End(X,i))^{\Gal(F/\mathcal{K})}\otimes\Q=i(\OC)\otimes\Q=i(E)$$
 is a field.
 Applying Example \ref{endX} and Lemma \ref{transA} to $k_0=\Q, G=\Gal(F/\mathcal{K})$ and $\mathfrak{A}=\End^0(X)$,
 we conclude that $\Gal(F/\mathcal{K})$ acts transitively on $\I(X)$. This implies that all the $X_s$'s are Galois-conjugate abelian  subvarieties of $X$.
 In particular, $\dim(X_s)$ does not depend on $s$ and
 $$\dim(X)=|\I(X)| \cdot \dim(X_s).$$
  On the other hand, the results of Section \ref{split1} tell us that
 $[E:\Q]$ divides $2\dim(X_s)$. This implies that $2\dim(X)$ is divisible by $|\I(X)| [E:\Q]$
 and therefore $|\I(X)| $ divides the ratio
 $$\frac{2\dim(X)}{[E:\Q]}=d_{X,E}.$$
 The transitivity of the action of  $\Gal(F/\mathcal{K})$ on $\I(X)$ implies that the stabilizer $\Gal(F/\mathcal{K})_s$ of any $s$
 is a subgroup in $\Gal(F/\mathcal{K})$, whose index divides $d_{X,E}$. However, the conditions of Theorem \ref{endoE} imposed on $\tilde{G}_{\lambda,X,K}$
 combined with Remark  \ref{minLambda} imply that such a subgroup must coincide with the whole group $\Gal(F/\mathcal{K})$,
 i.e., $\I(X)$ is a singleton and $\End^0(X)$ is a simple $\Q$-algebra. In particular, the center $C_X$ is a field.

  By  Remark \ref{CM}(i) applied to $Y=X$,  the product $[C_X:\Q] d_{\End^0(X)}$ divides $2\dim(Y)$. Applying Theorem \ref{invariants}(iiibis and iv) to
  $$k_0=\Q, k=C_X, \mathcal{A}=\End^0(X),    \ G=\Gal(F/\mathcal{K})$$
  $\E=i(E)$ and its centralizer  $\mathcal{Z}_{\mathcal{A}}(i(E))=\End^0(X,i)$,
  we conclude that $\End^0(X,i)$ is a central simple $i(E)$-algebra provided that  the only subgroup of $\Gal(F/\mathcal{K})$, whose index divides
  $M=[C_X:\Q] d_{\End^0(X)}/[i(E):\Q]$ is the whole $\Gal(F/\mathcal{K})$. However, $M$ obviously divides $d_{X,E}$ and we have already seen that
  the only subgroup  of $\Gal(F/\mathcal{K})$, whose index divides $d_{X,E}$ is the whole $\Gal(F/\mathcal{K})$.  This ends the proof.
  \end{proof}

  \section{Tangent spaces}
  \label{tangentS}

   The aim of this section is to obtain
   an additional information about endomorphiam algebras of abelian varieties $X$  with multiplications by a number field $E$, using
   the action of  $E$ on the Lie algebra of $X$.
   
   Throughout this section $K$ is a field of characteristic $0$. 

   \begin{sect}
   \label{SigmaE}
   Let $E$ be a number field and $\Sigma_E$  be the set of field embeddings
 $\tau: E \hookrightarrow K_a$.
   To each $\tau\in \Sigma_K$ corresponds the natural surjective $K_a$-algebra homomorphism
 $$\pi_{\tau}: E\otimes_{\Q}K_a \twoheadrightarrow E\otimes_{E,\tau}K_a =:K_{a,\tau}=K_a.$$
 Taking the direct sum of all $\pi_{\tau}$'s, we get the canonical isomorphiam of $K_a$-algebras
 $$\Pi:  E\otimes_{\Q}K_a \cong \oplus_{\tau\in \Sigma_E} K_{a,\tau}.$$

 \begin{rem}
 \label{EK}
 Suppose that $\tau(E)\subset K$ for all $\tau \in \Sigma_K$. (E.g., this condition holds if $E$ is normal over $\Q$
 and $K$ contains a subfield isomorphic to $E$.) Then  to each $\tau\in \Sigma_K$ corresponds the natural surjective $K$-algebra homomorphism
 $$\pi_{\tau,K}: E\otimes_{\Q}K \twoheadrightarrow E\otimes_{E,\tau}K =:K_{\tau}=K.$$
 Taking the direct sum of all $\pi_{\tau,K}$'s, we get the canonical isomorphism of $K$-algebras
 $$\Pi_K:  E\otimes_{\Q}K \cong \oplus_{\tau\in \Sigma_E} K_{\tau}.$$
 \end{rem}

 If $\mathcal{M}$ is any $E\otimes_{\Q}K_a$-module 
  then we write for each $\tau \in \Sigma_K$
 $$M_{\tau}=\{x\in \mathcal{M}\mid u(x)=\tau(u)x \ \forall u \in E=E\otimes 1 \subset E\otimes_{\Q}K_a\}.$$
 Clearly, $M_{\tau}=K_{a,\tau}\mathcal{M}$ is an $E\otimes_{\Q}K_a$-submodule of $\mathcal{M}$ and
 $$\mathcal{M}=\oplus_{\tau\in\Sigma_K}M_{\tau}.$$
 In particular, if $\mathcal{M}$ viewed as a vector space over $K_a=1\otimes K_a$ has finite dimension then
 $$\dim_{K_a}(\mathcal{M})=\sum _{\tau\in\Sigma_K}\dim_{K_a}(M_{\tau}).$$
 \end{sect}

  \begin{sect}

 Let $V_K$ be a smooth absolutely irreducible quasiprojective variety over $K$
 and $V=V\times_K K_a$ the correspomding variety over the algebraic closure $K_a$ of $K$.
 The Galois group $\Gal(K)$ acts naturally on
 $V_K(K_a)=V(K_a)$; the set of fixed points of this action coincides with $V_K(K)$. Further we identify $V_K(K_a)$ with its bijective image in $V(K_a)$.

  Let $P$ be a $K$-point of $V_K$, which we also view as $K_a$-point of $V$. We write $\mathbf{t}_P(V)$ for the tangent $K_a$-vector space to $V$ at $P$ and
  $\mathbf{t}_{P}(V_K)$ for the tangent $K$-vector space to $V_K$ at $P$.  The natural $K_a$-linear map \cite[Remark 6.3(iii) on p. 147]{GW}
  $$\mathbf{t}_P(V)\to \mathbf{t}_{P}(V_K)\otimes_K K_a $$
  is an isomorphism of $K_a$-vector spaces \cite[Remark 6.12(iii) on p. 152]{GW}. The Galois group $\Gal(K)$ acts by semi-linear automorphisms on $\mathbf{t}_P(V)$ and the corresponding $K$-vector subspace of $\Gal(K)$-invariants
  $$\mathbf{t}_P(V)^{\Gal(K)}= \mathbf{t}_{P}(V_K)\otimes 1=\mathbf{t}_{P}(V_K).$$

  Let $Z$ be a smooth closed $K_a$-subvariety of $V$ such that $P\in Z(K_a)$. Then the induced map of the $K_a$-vector tangent spaces
 $\mathbf{t}_{P}(Z)\to \mathbf{t}_{P}(V)$ is an embedding and we identify  $\mathbf{t}_{P}(Z)$ with its image in $\mathbf{t}_{P}(V)$.
 For each $\sigma \in \Gal(K)$ the $K_a$-vector subspace
 $$\sigma (\mathbf{t}_{P}(Z))\subset \mathbf{t}_{P}(V)$$
 coincides with the tangent space to the closed smooth subvariety $\sigma Z \subset V$ at $P\in (\sigma Z) (K_a)=\sigma (Z(K_a))$.
 (This assertion follows readily  from the classical explicit description of the tangent space \cite[Example 6.5 on p. 148]{GW}.)
  \end{sect}

 \begin{sect}
 Let $X$ be a positive-dimensional abelian variety over $K_a$ that is defined over $K$. This means that there exists an abelian scheme $X_K$ over $K$ such that $X=X_K\times_K K_a$.
 Let
 $$\ol \in X_K(K)\subset X_K(K_a)=X(K_a) $$
  be the zero of the group law on $X_K$. Let us put
 $$\Lie(X)=\mathbf{t}_{\ol}(X),  \ \Lie_K(X)=\mathbf{t}_{\ol}(X_K).$$
 By definition, $\Lie(X)$ (resp. $\Lie_K(X)$) is a $\dim(X)$-dimensional vector space over $K_a$ (resp.  over $K$) and there is the natural identification of $K_a$-vector spaces
 $$\Lie(X)=\Lie_K(X)\otimes_K K_a.$$
 If $Z\subset K_a$ is an abelian $K_a$-subvariety of $X$ then $Z(K_a)$ contains $\ol$ and we consider the $K_a$-vector subspace.
 $$\Lie(Z):=\mathbf{t}_{\ol}(Z)\subset \mathbf{t}_{\ol}(X)=\Lie(X).$$
 For each $\sigma \in \Gal(K)$ we have the abelian $K_a$-subvariety $\sigma Z$ and
 $$\Lie(\sigma Z)=\sigma (\Lie(Z))\subset \Lie_K(X)\otimes_K K_a=\Lie(X).$$
   By functoriality, $\Lie(X)$ (resp.  $\Lie_K(X)$) carries the natural
 structure of $\End(X)\otimes K_a=\End^0(X)\otimes_{\Q}K_a$-module  (resp. of $\End_K(X_K)\otimes K=\End_K^0(X_K)\otimes_{\Q}K$-module.)

 Let
 $$i: E \hookrightarrow  \End^0(X)$$
be  a $\Q$-algebra embedding that sends $1$ to $1_X$.

 In particular, $\Lie(X)$  becomes the $E\otimes_{\Q}K_a$-module.
 Let us consider  the $K_a$-vector subspace
 $$\Lie(X)_{\tau}=\{z\in \Lie(X)\mid i(e)z=\tau(e)z \ \forall e\in E\} \subset \Lie(X), \
n_{\tau}(X,i)=\dim_{K_a}(\Lie(X)_{\tau}).$$
Clearly,
$$\Lie(X)=\oplus_{\tau\in \Sigma_E}\Lie(X)_{\tau}, \ \dim(X)=\dim_{K_a}(\Lie(X))=\sum_{\tau\in \Sigma_E}n_{\tau}(X,i).$$
We write $n_{X,i}$ for the greatest common divisor of all $n_{\tau}(X,i)$. Clearly, $n_{X,i}$  is a positive integer dividing $\dim(X)$.
The subspace $\Lie(X)_{\tau}$ is $\End^0(X,i)$-invariant and carries the natural structure of $\End^0(X,i)\otimes_{\Q}K_a$-module.

From now on we assume that
$$i(E) \subset \End_K^{0}(X_K).$$

\begin{thm}
\label{Kn}
Suppose that $\fchar(K)=0$.
If  $\End_K^0(X,i)$ is a number field and  $n_{X,i}=1$
then $\End^0(X,i)$ is a semisimple commutative $E$-algebra and all its simple components are mutually isomorphic number fields.
\end{thm}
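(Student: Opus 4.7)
The plan is to apply Corollary \ref{commE} directly to $\End^0(X,i)$, viewed as the centralizer of $i(E)$ inside the semisimple $\Q$-algebra $\End^0(X)$. The Galois data comes from picking a finite Galois extension $F/K$ over which every endomorphism of $X$ is defined, which provides a natural action $G = \Gal(F/K) \to \Aut_{\Q}(\End^0(X))$ coming from $\kappa_{X,K}$, with $\End^0(X)^{G} = \End_K^0(X)$; the family of $K_a$-modules will be the tangent-space eigenspaces $\{\Lie(X)_{\tau}\}_{\tau \in \Sigma_E}$.

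First I would verify the hypotheses of Lemma \ref{transA} with $k_0 = \Q$, $\mathfrak{A} = \End^0(X)$, and $\mathcal{E} = i(E)$. The algebra $\End^0(X)$ is semisimple and finite-dimensional over $\Q$ by Section \ref{split0}; the subfield $i(E)$ contains $\Q$ and, thanks to the standing assumption $i(E) \subset \End_K^0(X_K)$, it lies inside $\mathfrak{A}^G = \End_K^0(X)$. The centralizer $\mathcal{Z}_{\mathfrak{A}}(i(E))$ is precisely $\End^0(X,i)$, and its $G$-invariants coincide with $\End_K^0(X,i)$, which is a field by hypothesis. So the hypotheses of Lemma \ref{transA} are met.

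Next I would check the additional hypothesis of Corollary \ref{commE}. Each tangent-space eigenspace $\Lie(X)_{\tau}$ is preserved by $\End^0(X,i)$ because the latter commutes with $i(E)$, and so inherits a natural $\End^0(X,i) \otimes_{\Q} K_a$-module structure. Since $\dim(X) > 0$, not all these modules vanish; and the greatest common divisor of the dimensions $\dim_{K_a}\Lie(X)_{\tau} = n_{\tau}(X,i)$ equals $n_{X,i} = 1$ by hypothesis. Corollary \ref{commE} then gives that $\End^0(X,i)$ is a finite-dimensional semisimple commutative $E$-algebra, which is either a field or a direct sum of finitely many copies of a single field. In either case each simple component is a finite extension of $\Q$, hence a number field, and all simple components are mutually isomorphic, which is the desired conclusion of Theorem \ref{Kn}.

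The actual mathematical work has all been packaged into Corollary \ref{commE} (whose proof uses $n_{X,i}=1$ to force the simple algebra $\mathcal{B}$ arising from Lemma \ref{transA} to satisfy $d_{\mathcal{B}}=1$, i.e. to be a field rather than a genuine matrix algebra). The only new ingredient here is recognizing that the $i(E)$-eigenspace decomposition of $\Lie(X)$ is the natural family of $\End^0(X,i)$-modules to feed into the corollary, so I do not anticipate any substantive obstacle.
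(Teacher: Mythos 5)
Your proposal is correct and follows essentially the same route as the paper: the paper's proof also sets $k_0=\Q$, $\mathfrak{A}=\End^0(X)$, $\mathcal{E}=i(E)$, $\mathcal{M}_\tau=\Lie(X)_\tau$, identifies $\mathcal{Z}_{\mathfrak{A}}(\mathcal{E})=\End^0(X,i)$ and $\mathcal{Z}_{\mathfrak{A}}(\mathcal{E})^G=\End_K^0(X,i)$, and then invokes Lemma \ref{transA} together with Corollary \ref{commE}. The only cosmetic difference is that you first replace $\Gal(K)$ by a finite quotient $\Gal(F/K)$, which changes nothing since the action already factors through it.
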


\begin{proof}
Let us put
$$k_0=\Q, \mathfrak{A}=\End^0(X),  G=\Gal(K), \Sigma=\Sigma_K, \mathcal{M}_{\tau}=\Lie(X)_{\tau}.$$
Applying Lemma \ref{transA} and Corollary \ref{commE} to $\E=i(E)$,
and
$$\mathcal{Z}_{\mathfrak{A}}(\E)=\End^0(X,i), \quad \ \mathcal{Z}_{\mathfrak{A}}(\E)^G=\End_K^{0}(X,i),$$
we obtain the desired result.
\end{proof}

\begin{cor}
\label{KnC}
Suppose that $$\fchar(K)=0, \ i(\OC)\subset \End_K(X), \ n_{X,i}=1.$$
 Let us assume that there exists a maximal ideal $\lambda$ of $\OC$ such that
 $$\End_{\tilde{G}_{\lambda,X,K}}(X_{\lambda})=k(\lambda)$$
 then
  $\End^0(X,i)$ is a semisimple commutative $E$-algebra and all its simple components are mutually isomorphic number fields.
\end{cor}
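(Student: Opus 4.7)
The plan is to reduce Corollary \ref{KnC} to Theorem \ref{Kn} via Corollary \ref{moriE}.

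First, I would invoke Corollary \ref{moriE}: since by hypothesis $\End_{\tilde{G}_{\lambda,X,K}}(X_{\lambda})=k(\lambda)$ and $i(\OC)\subset \End_K(X)$, we get
$$\End_K^0(X,i) = i(E),$$
which is a number field (isomorphic to $E$ via $i$). This is the key step that transforms an assumption about the Galois module $X_{\lambda}$ into an assumption about the $K$-rational centralizer algebra, and it is the only place where the Galois condition on $\lambda$-torsion enters.

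Then I would simply feed this into Theorem \ref{Kn}: the hypotheses of that theorem are now satisfied, namely $\fchar(K)=0$, $\End_K^0(X,i)$ is a number field, and $n_{X,i}=1$. Theorem \ref{Kn} then yields the desired conclusion: $\End^0(X,i)$ is a semisimple commutative $E$-algebra whose simple components are mutually isomorphic number fields.

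The proof is therefore a two-line deduction rather than an independent argument, so there is no real obstacle — the genuine work is already packaged in Corollary \ref{moriE} (which supplies the invariants of the $\Gal(K)$-action on $\End^0(X,i)$) and in Theorem \ref{Kn} (which, through Lemma \ref{transA} and Corollary \ref{commE}, extracts commutativity from the combination of transitive Galois action on $\I(X)$ and the $\gcd$-condition $n_{X,i}=1$ on the multiplicities of $E$ on $\Lie(X)$).
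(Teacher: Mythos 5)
Your proposal is correct and coincides with the paper's own proof: it applies Corollary \ref{moriE} to conclude $\End_K^0(X,i)=i(E)$ is a number field, and then invokes Theorem \ref{Kn}. Nothing further is needed.
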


\begin{proof}
By Corollary \ref{moriE}, the condition on the centralizer implies that $\End_K^0(X,i)=i(E) \cong E$ is a number field.
Now the result follows from Theorem \ref{Kn}.
\end{proof}

\begin{sect}
\label{nI}
We continue our study of certain subspaces of $\Lie(X)$.
If $\tau\in \Sigma_E$ and $\sigma\in\Gal(K)$ then their composition
$$\sigma\tau: E \hookrightarrow K_a$$ also lies in $\Sigma_E$ and
$$\sigma(\Lie(X)_{\tau})=\Lie(X)_{\sigma\tau}\subset \Lie(X).$$
In particular,
$$n_{\tau}(X,i)=\dim_{K_a}(\Lie(X)_{\tau})=\dim_{K_a}(\Lie(X)_{\sigma\tau})=n_{\sigma\tau}(X,i),$$
i.e.,
$$n_{\tau}(X,i)=n_{\sigma\tau}(X,i) \ \forall \tau \in \Sigma_E, \sigma \in \Gal(K).$$
In addition,  suppose that  $Z\subset X$ is an abelian $K_a$-subvariety of $X$ such $\Lie(Z)$ is $E$-invariant
(i.e., is a $E\otimes_{\Q}K_a$-submodule of $\Lie(X)$). Then $\Lie(\sigma Z)$ is also $E$-invariant and
$$\sigma(\Lie(Z)_{\tau})=\Lie(\sigma Z)_{\sigma\tau}.$$
In particular, if $\tau(E)\subset K$ then $\sigma\tau=\tau$ and therefore
$$\sigma(\Lie(Z)_{\tau})=\Lie(\sigma Z)_{\tau}$$
and
$$\dim_{K_a}(\Lie(\sigma Z)_{\tau})=\dim_{K_a}(\Lie(Z)_{\tau}).$$
Now we use the notation of Subsections \ref{split0} and \ref{split1}.  Recall that $X_s\subset X$ is a positive dimensional abelian $K_a$-subvariety of $X$ for all $s\in \I(X)$.
Since $\fchar(K)=0$, the isogeny $\Pi_X$ (see Lemma \ref{remL}) induces an isomorphism of $K_a$-vector spaces
$$\Lie(X)=\oplus_{s\in\I(X)} \Lie(X_s)$$
while each subspace  $\Lie(X_s)\subset \Lie(X)$ is $E$-invariant and $\End^0(X,i)$-invariant in light of results of Subsection \ref{split1}. In addition,
the action of $E$ on $\Lie(X_s)\subset \Lie(X)$ induced by $i$ coincides with the action of $E$ induced by $i_s: E \hookrightarrow \End^0(X_s)$. This implies that
$$\dim_{K_a}(\Lie(X_s)_{\tau})=n_{\tau}(X_s,i_s) \ \forall s\in \I(X), \tau\in \Sigma_E.$$
It is also clear that
$$\sigma(\Lie(X_s))=\Lie(\sigma(X_s))=\Lie(X_{\sigma(s)}) \ \forall \sigma \in \Gal(K), s\in \I(X).$$
So, if
\begin{equation}
\label{imageK}
\tau(E)\subset K \ \forall \tau \in \Sigma_E
\end{equation}
and the action of $\Gal(K)$ on $\I(X)$ is transitive  then $\dim_{K_a}(\Lie(X_s)_{\tau})$ does {\sl not} depend on a choice of $s$ and
$$n_{\tau}(X,i)=\dim_{K_a}(\Lie(X)_{\tau})=|\I(X)| \dim_{K_a}(\Lie(X_s)_{\tau}.$$
This implies that if \eqref{imageK} holds and the Galois action on $\I(X)$ is transitive then  $n_{\tau}(X,i)$ is divisible by $|\I(X)|$  for all $\tau \in \Sigma_E$.
It follows that $n_{X,i}$ is  divisible by $|\I(X)|$.
\end{sect}

\begin{lem}
\label{tangentSN}
Suppose that $\fchar(K)=0$ and $\tau(E)\subset K$ for all $\tau\in \Sigma_E$.
If  $\End_K^0(X,i)$ is a number field and  $n_{X,i}=1$
then $\I(X)$ is a singleton, i.e., $X=X_s$,  $C_X$ is a number field and $\End^0(X)$ is simple $\Q$-algebra, which is a central simple algebra over $C_X$.
\end{lem}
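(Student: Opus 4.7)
The plan is to combine the transitivity argument used in the proof of Theorem~\ref{endoE} with the divisibility observation at the end of Subsection~\ref{nI}, bypassing any hypothesis on $\tilde{G}_{\lambda,X,K}$ by using the Lie algebra information instead. First I would set $\mathfrak{A}=\End^0(X)$, $G=\Gal(K)$, $\E=i(E)$ and check the hypothesis of Lemma~\ref{transA}: the centralizer $\mathcal{Z}_{\mathfrak{A}}(\E)$ is precisely $\End^0(X,i)$, and its subalgebra of $G$-invariants is $\End^0(X,i)^{\Gal(K)}=\End_K^0(X,i)$, which is a number field by assumption, hence in particular a field. Invoking Example~\ref{endX} together with Lemma~\ref{transA} then yields that $\Gal(K)$ acts transitively on the finite set $\I(X)$ of simple components.

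Next I would use the content of Subsection~\ref{nI}. The hypothesis $\tau(E)\subset K$ for all $\tau\in \Sigma_E$ is exactly condition \eqref{imageK}. Combined with the transitivity of $\Gal(K)$ on $\I(X)$ just established, the final paragraph of Subsection~\ref{nI} tells us that $n_{\tau}(X,i)$ is divisible by $|\I(X)|$ for every $\tau\in\Sigma_E$, and therefore $|\I(X)|$ divides the greatest common divisor $n_{X,i}$. Since we assumed $n_{X,i}=1$, we conclude $|\I(X)|=1$, i.e., $\I(X)$ is a singleton.

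Finally, once $\I(X)$ is a singleton, the decomposition
$$\End^0(X)=\oplus_{s\in\I(X)} D_s$$
of Subsection~\ref{split0} reduces to a single simple summand, so $\End^0(X)$ is a simple finite-dimensional $\Q$-algebra. Its center $C_X$ is then automatically a field, hence a number field (being a finite-dimensional commutative $\Q$-subalgebra of the simple algebra $\End^0(X)$), and by definition of the center, $\End^0(X)$ is a central simple $C_X$-algebra. The identification $X=X_s$ for the unique $s\in\I(X)$ follows from Lemma~\ref{remL}(iii), since $\Pi_X:X_s\to X$ is then an isogeny, and no step beyond the straightforward combination above is required. I do not expect any substantive obstacle: the entire argument is a direct assembly of Lemma~\ref{transA} and the divisibility relation between $|\I(X)|$ and $n_{X,i}$ derived in Subsection~\ref{nI}, with the Lie-theoretic input $n_{X,i}=1$ replacing the mod-$\lambda$ Galois hypothesis used in Theorem~\ref{endoE}.
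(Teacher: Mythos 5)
Your proof is correct and follows essentially the same route as the paper's: you establish transitivity of the Galois action on $\I(X)$ via Lemma~\ref{transA} applied to $\mathfrak{A}=\End^0(X)$ and $\E=i(E)$ (using that the $G$-invariants $\End_K^0(X,i)$ form a field), and then invoke the divisibility $|\I(X)|\mid n_{X,i}$ from the end of Subsection~\ref{nI} together with $n_{X,i}=1$. The paper's proof is terser but makes the identical two moves; your unpacking of which lemma justifies the transitivity is exactly the implicit reference.
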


\begin{proof}
If  $\End_K^0(X,i)$ is a number field then $\Gal(K)$ acts on $\I(X)$ transitively. By results of Subsection \ref{nI},
$n_{X,i}$
is divisible by $|\I(X)|$.
Since  $n_{X,i}=1$, $\I(X)$ is a singleton, i.e.,  $X=X_s$ and $\End^0(X)=\End^0(X_s)$ is a simple $\Q$-algebra.
\end{proof}

\begin{rem}
Lemma \ref{tangentSN} is a generalization of (\cite[Th. 3.12(i)]{ZarhinMZ2}, \cite[Th. 3.12(i)]{ZarhinMZ2arxiv}).
\end{rem}

\begin{thm}
\label{mainEtangent}
Suppose that
$$\fchar(K)=0, \ \End_K^0(X,i)=i(E), \ n_{X,i}=1, \ \tau(E)\subset K \  \forall \ \tau\in \Sigma_E.$$
Then $\End^0(X,i)$ is a number field containing $E$ and the degree $[\End^0(X,i):i(E)]$ divides $d_{X,E}$.
\end{thm}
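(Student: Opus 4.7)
The strategy is to combine the earlier structural results to show that $\End^0(X,i)$ is a commutative semisimple $E$-algebra of the form $i(E)C_X$, then to force the number of simple summands to be one by using the tangent-space decomposition, and finally to deduce the degree divisibility from Theorem \ref{mainalg}(0). First, I would invoke Lemma \ref{tangentSN} to get that $\I(X)$ is a singleton, so that $\End^0(X)$ is a simple $\Q$-algebra with a number-field center $C_X$. Next, Theorem \ref{Kn} (with Corollary \ref{commE} in its proof) yields that $\End^0(X,i)$ is a semisimple commutative $E$-algebra whose simple components are mutually isomorphic number fields; being commutative, $\End^0(X,i)$ coincides with its own center, which by Theorem \ref{SCenter} is $i(E)C_X$. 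Thus
\[
\End^0(X,i) \;=\; i(E)C_X \;=\; \bigoplus_{j\in J} F_j,
\]
a finite direct sum of mutually isomorphic number fields $F_j\supset i(E)$.

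The key step is to prove $|J|=1$. Let $e_j\in F_j$ be the identity of $F_j$ and pick $N\ge 1$ with $Ne_j\in \End(X)$ for every $j$; set $X^{(j)} := (Ne_j)(X)\subset X$. The relations $\sum_j e_j = 1_X$ and $e_je_{j'}=0$ for $j\ne j'$ yield that the sum morphism $\prod_j X^{(j)}\to X$ is an isogeny, which in characteristic zero induces an $E$-equivariant decomposition $\Lie(X) = \bigoplus_j \Lie(X^{(j)})$; taking $\tau$-eigenspaces gives $\Lie(X)_\tau = \bigoplus_j \Lie(X^{(j)})_\tau$ for each $\tau\in\Sigma_E$. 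The group $\Gal(K)$ permutes the central idempotents of $\End^0(X,i)$ and hence acts on $J$; applying Theorem \ref{invariants}(iiibis) with $k_0=\Q$, $k=C_X$, $\mathcal{A}=\End^0(X)$, $\E=i(E)$, $G=\Gal(K)$ (and using $\End_K^0(X,i)=i(E)=\E$) gives transitivity of this action. For $\sigma\in\Gal(K)$ one has $\sigma(X^{(j)}) = X^{(\sigma(j))}$ and hence $\sigma(\Lie(X^{(j)})_\tau) = \Lie(X^{(\sigma(j))})_{\sigma\tau}$, and the hypothesis $\tau(E)\subset K$ ensures $\sigma\tau=\tau$, so $\dim_{K_a}\Lie(X^{(j)})_\tau$ is independent of $j$. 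Therefore $n_\tau(X,i) = |J|\cdot\dim_{K_a}\Lie(X^{(j_0)})_\tau$ for every $\tau$, so $|J|$ divides every $n_\tau(X,i)$ and hence the gcd $n_{X,i}=1$. This forces $|J|=1$, making $\End^0(X,i)$ a single number field containing $i(E)$.

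With $\End^0(X,i) = i(E)C_X$ now a field, I would invoke Theorem \ref{mainalg}(0) (same $k_0,k,\mathcal{A},\E$) to conclude that $[\End^0(X,i):i(E)]$ divides $([C_X:\Q]\cdot d_{\End^0(X)})/[E:\Q]$. Remark \ref{CM}(i) applied to $Y=X$ shows that $[C_X:\Q]\cdot d_{\End^0(X)}$ divides $2\dim(X)$, so $[\End^0(X,i):i(E)]$ divides $2\dim(X)/[E:\Q] = d_{X,E}$, completing the proof. The main obstacle is the middle paragraph: translating the tangent-space hypothesis $n_{X,i}=1$ into the ring-theoretic conclusion $|J|=1$ requires both the Galois-transitivity on $J$ (coming from $\End_K^0(X,i)=i(E)$) and the assumption $\tau(E)\subset K$, which prevents Galois from shuffling the $\tau$-eigenspaces of the Lie algebras of the conjugate subvarieties $X^{(j)}$.
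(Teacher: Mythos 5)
Your proposal is correct and follows essentially the same route as the paper's proof: Lemma \ref{tangentSN} to reduce to $\End^0(X)$ central simple over $C_X$, Theorem \ref{Kn} for the decomposition $\End^0(X,i)=\oplus_{j\in J}F_j$ into isomorphic number fields, Theorem \ref{invariants}(iiibis) for Galois-transitivity on $J$, and the $\tau$-eigenspace dimension count (using $\tau(E)\subset K$) to force $|J|$ to divide each $n_\tau(X,i)$ and hence $n_{X,i}=1$. The only cosmetic divergences are that the paper applies the idempotents $e_j$ directly to $\Lie(X)$ rather than passing through the subvarieties $X^{(j)}=(Ne_j)(X)$, and that for the final degree bound the paper simply observes that $\End^0(X,i)$ is a number subfield of $\End^0(X)$, so $[\End^0(X,i):\Q]$ divides $2\dim(X)$, which is shorter than your route through Theorem \ref{mainalg}(0) and Remark \ref{CM}(i) but gives the same conclusion.
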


\begin{proof}
Let us put $k_0=\Q$. By Lemma \ref{tangentSN},  $\mathcal{A}:=\End^0(X)$ is a central simple algebra over the number field $k:=C_X$.
Let us apply Theorem \ref{invariants} to $G=\Gal(K)$, the field $\E=i(E)$  and
$$\mathcal{Z}_{\mathcal{A}}(E)=\End^0(X,i), \  \mathcal{Z}_{\mathcal{A}}(E)^G=\End_K^{0}(X,i)=i(E).$$
By Theorem \ref{Kn}, $\End^0(X,i)$  (in the notation of  Theorem \ref{invariants})  is a direct sum of fields
$$\End^0(X,i)=\oplus_{j\in J}F_j$$
where all $F_j$'s are mutually isomorphic number fields. By  Theorem \ref{invariants}(iii, iiibis), there is a  {\sl transitive} action
$$\rho_J: \Gal(K) \to \Perm(J)$$
of $\Gal(K)$ on $J$  such that if $\rho_J(\sigma)j=j^{\prime}$ then $\sigma(F_j)=F_{j^{\prime}}$.
Let $e_j \in F_j\in \End^0(X,i)$ be the identity element of $F_j$. Clearly,
$$\sum_{j\in J}e_j=1 \in \End^0(X), \ e_j^2=e_j^2, \ e_j e_{j^{\prime}}=0 \ \forall j \ne j^{\prime}.$$
This implies that the set $\{e_j\mid j\in J\}$ is $\Gal(K)$-invariant and the action of $\Gal(K)$ on this set is transitive. Let us put
$$\Lie(X)^{(j)}=e_j \Lie(X)\subset \Lie(X).$$
Clearly, each $\Lie(X)^{(j)}$ is a $E\otimes_{\Q}K_a$-sumbodule of $\Lie(X)$ and
$$\Lie(X)=\oplus_{j\in J}\Lie(X)^{(j)}.$$
In addition, $\Gal(K)$ acts transitively on the set $\{\Lie(X)^{(j)}\mid j\in J\}$.  Since $\tau(E)\subset K$   for each $\tau\in \Sigma_E$,
$\dim_{K_a}(\Lie(X)^{(j)}_{\tau})$ does {\sl not} depend on a choice of $j\in J$.
This implies that
$$n_{\tau}(X,i)=\dim_{K_a}(\Lie(X)_{\tau})=|J| \dim_{K_a}(\Lie(X)^{(j)}_{\tau});$$
in particular, all $n_{\tau}(X,i)$ are divisible by $|J|$. This implies that $n_{X,i}$ is divisible by $|J|$. Since  $n_{X,i}=1$,
$J$ is a singleton, i.e., $\End^0(X,i)=F_j$ is a (number) field.

 It remains to prove that
 $[F_j:E]$ divides $d_{X,E}$. Indeed,  since $F_j$ is a subfield of $\End^0(X)$, its degree $[F_j:\Q]$ divides $2\dim(X)$ and therefore
 $$[F_j:E]=\frac{[F_j:\Q]}{[E:\Q]}$$
 divides
 $$\frac{2\dim(X)}{[E:\Q]}=d_{X,E}.$$
\end{proof}

\begin{thm}
\label{mainElambda}
Suppose that $$\fchar(K)=0, \ i(\OC)\subset \End_K(X), \ n_{X,i}=1,
\ \tau(E)\subset K \  \forall \ \tau\in \Sigma_E.$$
 Let us assume that there exists a maximal ideal $\lambda$ of $\OC$ such that
 $$\End_{\tilde{G}_{\lambda,X,K}}(X_{\lambda})=k(\lambda)$$
 and $\tilde{G}_{\lambda,X,K}$ does not contain a proper normal subgroup with index dividing $d_{X,E}$.

 Then $\End^0(X,i)=i(E) \cong E$.
\end{thm}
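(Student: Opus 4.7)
The plan is to combine Theorem \ref{mainEtangent} with the minimal-cover machinery from Remark \ref{minLambda} in order to upgrade the divisibility conclusion $[\End^0(X,i):i(E)]\mid d_{X,E}$ into an equality. First I would observe that the centralizer hypothesis $\End_{\tilde{G}_{\lambda,X,K}}(X_{\lambda})=k(\lambda)$ together with Corollary \ref{moriE} yields $\End_K^0(X,i)=i(E)$, and then all of the remaining hypotheses of Theorem \ref{mainEtangent} are already built into the assumptions of the current theorem. Applying that theorem delivers a number field $F':=\End^0(X,i)\supset i(E)$ with $[F':i(E)]$ dividing $d_{X,E}$. Since $\Gal(K)$ acts on $F'$ through $\kappa_{X,K}$ with fixed subfield $\End_K^0(X,i)=i(E)$, standard Galois theory makes $F'/i(E)$ a finite Galois extension equipped with a surjection $\Gal(K)\twoheadrightarrow \Gal(F'/i(E))$.

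Next I would invoke Remark \ref{minLambda}: choose a finite Galois extension $F/K$ inside $K^{\sep}$ containing both $\mathcal{F}_{X,K}$ and $K(X_{\lambda})$, and an intermediate field $\mathcal{K}\subset F$ as in Lemma \ref{maximal} so that $\phi_{\mathcal{K}}:\Gal(F/\mathcal{K})\twoheadrightarrow \tilde{G}_{\lambda,X,K}$ is a minimal cover and $\tilde{G}_{\lambda,X,\mathcal{K}}=\tilde{G}_{\lambda,X,K}$. The centralizer hypothesis is inherited by $\mathcal{K}$, so Corollary \ref{moriE} over $\mathcal{K}$ also gives $\End_{\mathcal{K}}^0(X,i)=i(E)$. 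The action of $\Gal(K)$ on $F'$ factors through $\Gal(F/K)$, since all endomorphisms of $X$ are defined over $F\supset\mathcal{F}_{X,K}$, and its restriction to $\Gal(F/\mathcal{K})$ still has $i(E)$ as fixed subfield, so $\Gal(F/\mathcal{K})$ likewise surjects onto $\Gal(F'/i(E))$.

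The concluding step is a short contradiction argument. Assuming $d:=[F':i(E)]>1$, let $H\subset \Gal(F/\mathcal{K})$ denote the kernel of that surjection; it is a normal subgroup of index $d$. By Remark \ref{minR}(ii), the image $\phi_{\mathcal{K}}(H)$ is a proper subgroup of $\tilde{G}_{\lambda,X,K}$ whose index $d'$ is a nontrivial divisor of $d$, hence of $d_{X,E}$; normality of $H$ together with surjectivity of $\phi_{\mathcal{K}}$ makes $\phi_{\mathcal{K}}(H)$ a proper normal subgroup of $\tilde{G}_{\lambda,X,K}$ of index dividing $d_{X,E}$, contradicting the standing hypothesis on normal subgroups. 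This forces $d=1$, i.e., $\End^0(X,i)=i(E)$. The one delicate point I anticipate is the bookkeeping in this transfer-of-indices step: one must verify that the normal-subgroup form of Remark \ref{minR}(ii) genuinely applies and that normality passes through $\phi_{\mathcal{K}}$; once that is recorded carefully the proof closes at once.
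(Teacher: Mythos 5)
Your proposal is correct and follows essentially the same route as the paper: Corollary \ref{moriE} to pin down $\End_K^0(X,i)=i(E)$, Theorem \ref{mainEtangent} to obtain the field $\End^0(X,i)\supset i(E)$ with degree dividing $d_{X,E}$, then the minimal-cover construction of Remark \ref{minLambda} applied to the Galois action on $\End^0(X,i)$ to force the degree to be $1$. The ``delicate point'' you flag is real but resolves exactly as you suspect: $\phi_{\mathcal{K}}$ is surjective, so the image of a normal subgroup $H\subset\Gal(F/\mathcal{K})$ is normal in $\tilde{G}_{\lambda,X,K}$, it is proper by minimality of the cover, and its index divides that of $H$ by the Lagrange computation in Remark \ref{minR}(ii) (which nowhere uses normality); this is precisely the transfer the paper invokes when it says $\Gal(F/\mathcal{K})$ also contains no proper normal subgroup of index dividing $d_{X,E}$.
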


\begin{proof}
By Corollary \ref{moriE}, the condition on the centralizer implies that
$$\left[\End^0(X,i)\right]^{\Gal(K)}=\End_K^0(X,i)=i(E).$$
Applying Theorem \ref{mainEtangent},  we conclude that $\End^0(X,i)$ is a field containing
$E$ and $\left[\End^0(X,i):E\right]$ divides $d_{X,E}$.
By Remark \ref{minLambda}, there exist a finite Galois extension $F/K$ and an overfield $\mathcal{K}$ of $K$ that is a subfield of $F$
 that enjoys the following properties.

 \begin{itemize}
 \item[(i)]
  $$\End_{\tilde{G}_{\lambda,X,\mathcal{K}}}(X_{\lambda})
  =\End_{\tilde{G}_{\lambda,X,K}}(X_{\lambda})=k(\lambda)$$
  and
  $$\tilde{G}_{\lambda,X,\mathcal{K}}=\tilde{G}_{\lambda,X,K}\subset \Aut_{k(\lambda)}(X_{\lambda}).$$
  This implies that $\End_{\mathcal{K}}^0(X,i)=i(E)$.
 \item[(ii)]
 There is a surjective group homomorphism
 $$\Gal(F/\mathcal{K}) \twoheadrightarrow \tilde{G}_{\lambda,X,\mathcal{K}}=\tilde{G}_{\lambda,X,K},$$
 which is a {\sl minimal cover}.
 In particular, $\Gal(F/\mathcal{K})$ also does {\sl not} contain a proper normal subgroup with index dividing $d_{X,E}$.
  \item[(iii)]
 The homomorphism
 $$\kappa_{X,\mathcal{K}}:\Gal(\mathcal{K}) \to \Aut(\End^0(X))= \Aut_{\Q}(\End^0(X))$$
 factors through
 $$\Gal(\mathcal{K}) \twoheadrightarrow \Gal(F/\mathcal{K}).$$
 Since $\End^0(X,i)$ is a $\Gal(K)$-stable subalgebra of $\End^0(X)$,  there is a group homomorphism
 $$\mathbf{\kappa}: \Gal(F/\mathcal{K}) \to \Aut_{\Q}(\End^0(X,i)),$$
 such that the subalgebra $\left[\End^0(X,i)\right]^{\Gal(F/\mathcal{K})}$ of $\Gal(F/\mathcal{K})$-invariants coincides with
 $$\left[\End^0(X,i)\right]^{\Gal(\mathcal{K})}=\End_{\mathcal{K}}^0(X,i)=i(E).$$
 \end{itemize}
Let $\Gamma$ be the image of
$$\mathbf{\kappa}: \Gal(F/\mathcal{K}) \to \Aut\left(\End^0(X,i)/i(E)\right).$$
 Clearly,
$$\left[\End^{0}(X,i)\right]^{\Gamma}=i(E)$$
and Galois theory tells us that  $|\Gamma|=\left[\End^0(X,i):i(E)\right]$. This implies that $\ker(\mathbf{\kappa})$ is a subgroup of index
$\left[\End^0(X,i):i(E)\right]$ in $\Gal(F/\mathcal{K})$.  This implies that the index of  $\ker(\mathbf{\kappa})$ in $\Gal(F/\mathcal{K})$
divides $d_{X,E}$ and therefore $\Gal(F/\mathcal{K})=\ker(\mathbf{\kappa})$, i.e., $\Gamma$ is the trivial group of order $1$ and
$$i(E)=\left[\End^{0}(X,i)\right]^{\Gamma}=\End^0(X,i).$$
\end{proof}

\begin{rem}
Theorem \ref{mainElambda} is a generalization of (\cite[Th. 3.12(ii)]{ZarhinMZ2}
\footnote{The assertion (ii)(a) of \cite[Th. 3.12(ii)]{ZarhinMZ2} is wrong without additional assumptions.} 
, \cite[Th. 3.12(ii)]{ZarhinMZ2arxiv}).
\end{rem}

\end{sect}

\section{Doubly Transitive Permutation Groups and Permutational Modules}
\label{permutation}
In order to apply our results to endomorphism algebras of superelliptic jacobians,
we need to discuss modular representations that correspond to permutation groups.

Let $T$ be a finite nonempty set, $n=|T|$ and $\Perm(T) \cong \ST_n$ the group of permutations of $T$. We write $\Alt(T) \cong \A_n$
for the only (normal) subgroup of index $2$ in $\Perm(T)$.

Let $\ell$ be a prime. One may attach to $T$ the following natural linear representations of $\Perm(T) $ over $\F_{\ell}$.
In what follows we assume that
$$n \ge 3.$$
First, let us consider  the space
$\F_{\ell}^T$ of  all functions $\phi: T \to \F_{\ell}$. The action of $\Perm(T)$ on $T$ gives rise to the faithful $n$-dimensional linear 
representation
$$\Perm(T) \to \Aut_{\F_{\ell}}(\F_{\ell}^T).$$
More precisely, each $g \in \Perm(T)$ sends a function $\phi: T \to \F_{\ell}$ to the function
$$[g]\phi:t \mapsto \phi(g^{-1}t) \ \forall t\in T.$$
The representation space $\F_{\ell}^T$ contains the invariant line $\F_\ell\cdot 1_T$ of constant functions (where $1_T$ is the constant function $1$)
and the invariant $(n-1)$-dimensional  hyperplane of functions with zero ``integral''
$$(\F_{\ell}^T)^{0}=\{\phi: T \to \F_{\ell}\mid \sum_{t\in T}\phi(t)=0\}\subset \F_{\ell}^T.$$
Clearly,
$$\F_\ell\cdot 1_T=(\F_{\ell}^T)^{\Perm(T)},$$
i.e., $\F_\ell\cdot 1_T$ is the subspace of $\Perm(T)$-invariants in $\F_{\ell}^T$.

If $\ell$ does not divide $n$ then
$$\F_{\ell}^T=\F_\ell\cdot 1_T\oplus (\F_{\ell}^T)^{0}.$$
This implies that if $\ell$ does {\sl not} divide $n$ then $(\F_{\ell}^T)^{0}$ is a {\sl faithful} $\Perm(T)$-module.

If $\ell$ divides $n$ then $\F_\ell\cdot 1_T\subset (\F_{\ell}^T)^{0}$ and we may get the {\sl heart} of the permutational representation \cite{Mortimer}
$$ (\F_{\ell}^T)^{00}=(\F_{\ell}^T)^{0}/(\F_\ell\cdot 1_T),$$
which also carries the natural structure of $(n-2)$-dimensional representation space
$$\Perm(T) \to \Aut_{\F_{\ell}}((\F_{\ell}^T)^{00}).$$

We may also consider the quotient
$$(\F_{\ell}^T)_0=\F_{\ell}^T/(\F_\ell\cdot 1_T),$$
which is also provided with  the natural structure of $(n-1)$-dimensional representation space
$$\Perm(T) \to \Aut_{\F_{\ell}}((\F_{\ell}^T)_0)$$
\cite{Xue}.
If $\ell$ does not divide $n$ then the $\Perm(T)$-modules $(\F_{\ell}^T)^{0}$
and  $(\F_{\ell}^T)_{0}$ are canonically isomorphic. If $\ell$ divides $n$ then
$$(\F_{\ell}^T)_0=\F_{\ell}^T/(F_\ell\cdot 1_T) \supset (\F_{\ell}^T)^{0}/(F_\ell\cdot 1_T)= (\F_{\ell}^T)^{00},$$
i.e., $(\F_{\ell}^T)_0$ contains a $\Perm(T)$-invariant hyperplane that is isomorphic as $\Perm(T)$-module to $ (\F_{\ell}^T)^{00}$.

\begin{lem}
\label{faithodd}
Suppose that
$$n\ge 4, \  \ell>2, \  \ell\mid n.$$
Then both $\Perm(T)$-modules $(\F_{\ell}^T)^{00}$ and $(\F_{\ell}^T)_{0}$  are faithful.
\end{lem}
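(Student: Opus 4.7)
The plan is to reduce the faithfulness of $(\F_{\ell}^T)_0$ to that of $(\F_{\ell}^T)^{00}$: since, as noted just before the lemma, $(\F_{\ell}^T)^{00}$ is a $\Perm(T)$-invariant submodule of $(\F_{\ell}^T)_0$, any element of $\Perm(T)$ that acts trivially on $(\F_{\ell}^T)_0$ a fortiori acts trivially on $(\F_{\ell}^T)^{00}$. So it suffices to prove faithfulness of the heart.

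First I would observe that the hypotheses $n\ge 4$, $\ell>2$, $\ell\mid n$ actually force $n\ge 5$, since no odd prime divides $4$. For $n\ge 5$, the only normal subgroups of $\Perm(T)\cong \Sn$ are $\{1\}$, $\Alt(T)$, and $\Perm(T)$, and $\Alt(T)$ is simple. Denoting by $N$ the kernel of
$$\Perm(T)\longrightarrow \Aut_{\F_{\ell}}\!\bigl((\F_{\ell}^T)^{00}\bigr),$$
it therefore suffices to exhibit one $3$-cycle and one transposition that act nontrivially on the heart.

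Next I would perform two short calculations of the same shape. For a $3$-cycle $g=(a\,b\,c)$, pick any $d\in T\setminus\{a,b,c\}$ (possible since $n\ge 4$) and the element $\phi=\delta_a-\delta_d\in (\F_{\ell}^T)^{0}$. Direct evaluation yields
$$[g]\phi-\phi=\delta_b-\delta_a,$$
a function whose values are $1$ at $b$, $-1$ at $a$, and $0$ at any point $x\notin\{a,b\}$; since $n\ge 3$ such an $x$ exists, and since $\ell>2$ the values $\pm 1$ differ from $0$, so this function is not constant, i.e.\ does not lie in $\F_{\ell}\cdot 1_T$. Hence $g$ acts nontrivially on $(\F_{\ell}^T)^{00}$, so $N$ does not contain $\Alt(T)$, and by simplicity $N\cap \Alt(T)=\{1\}$, forcing $|N|\le 2$. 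For a transposition $\tau=(a\,b)$, taking $\phi=\delta_a-\delta_c$ for any $c\ne a,b$ (which exists since $n\ge 3$) gives the identical calculation $[\tau]\phi-\phi=\delta_b-\delta_a$, again nonconstant because $\ell>2$. So $N$ contains no transposition either, and $N=\{1\}$.

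There is no real obstacle here beyond keeping track of where each hypothesis enters: $\ell>2$ is needed to ensure $\delta_b-\delta_a$ is not a constant function (otherwise over $\F_2$ the value at $a$ equals the value at $b$); $n\ge 5$ (forced by the remaining hypotheses) is used to invoke simplicity of $\Alt(T)$ and the classification of normal subgroups of $\Sn$; and $\ell\mid n$ is used only so that $\F_{\ell}\cdot 1_T\subset (\F_{\ell}^T)^0$, making the heart $(\F_{\ell}^T)^{00}$ defined as in the text.
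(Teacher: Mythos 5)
Your proof takes a genuinely different route from the paper's. The paper shows directly that every non-identity $g\in\Perm(T)$ moves some class in the heart: given $g\ne 1$, choose $t$ with $s=g(t)\ne t$, set $u=g^{-1}(t)$, and build $\phi\in(\F_{\ell}^T)^0$ with $\phi(s)=\phi(u)=1$, $\phi(t)=0$ (this needs $n\ge 4$); then $[g]\phi-\phi$ takes the values $-1$ at $s$ and $1$ at $t$, which differ because $\ell>2$, so the image of $\phi$ in $(\F_{\ell}^T)^{00}$ is not $g$-invariant. That pointwise argument is self-contained and makes no appeal to simplicity of $\Alt(T)$ or to the normal-subgroup structure of $\Sn$; the paper reserves the group-theoretic route for the harder case $\ell=2$ in Lemma~\ref{faitheven}. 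You instead verify a single $3$-cycle and then invoke the structure of normal subgroups of $\Sn$ for $n\ge 5$ (correctly noting that your hypotheses force $n\ge 5$). This is less elementary but computationally lighter. Incidentally, the step where you invoke $\ell>2$ ``so the values $\pm 1$ differ from $0$'' does not in fact need $\ell>2$: for $n\ge 3$ the function $\delta_b-\delta_a$ takes both the value $1$ and the value $0$, hence is nonconstant in every characteristic. In your argument $\ell>2$ enters only through the deduction $n\ge 5$.

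There is a gap in your closing step. From $|N|\le 2$ and the fact that $N$ contains no transposition you cannot conclude $N=\{1\}$: a normal subgroup of order $2$ disjoint from $\Alt(T)$ would be generated by some odd involution, and odd involutions need not be transpositions (a product of three disjoint transpositions, say). Either finish by observing that a normal subgroup of order $2$ in $\Sn$ would be central while the center of $\Sn$ is trivial for $n\ge 3$, or, more simply, use the classification you already stated: once a $3$-cycle lies outside $N$, neither $\Alt(T)\subset N$ nor $\Perm(T)\subset N$ is possible, so $N=\{1\}$ at once, making the transposition computation unnecessary.
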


\begin{proof}
Since $(\F_{\ell}^T)^{00}$ is isomorphic to a submodule of  $(\F_{\ell}^T)_{0}$, it suffices to check the faithfulness of $Perm(T)$-module $(\F_{\ell}^T)^{00}$.
Let $g$ be a non-identity permutation of $T$. The there is $t\in T$ such that $s=g(t)\ne t$. Let $u:=g^{-1}(t)$. Clearly, $u \ne t$.
No matter whether $u$ coincides with $s$ or not,
 there exists $\phi \in (\F_{\ell}^T)^{0}$ such that
$\phi(s)=\phi(u)=1, \phi(t)=0$. (Here we use that $|T|=n>3$.)
 Then
$$[g]\phi(s)=\phi(t)=0, \ [g]\phi(t)=\phi(u)=1.$$
This implies that the function $[g]\phi-\phi$ takes values $-1$ at $s$ and $1$ at $t$. In particular, it is {\sl not} a constant function. This implies that the image of $\phi$
in $(\F_{\ell}^T)^{0}/\F_{\ell}\cdot 1_T)= (\F_{\ell}^T)^{00}$ is {\sl not} $g$-invariant.
This implies that the action of $\Perm(T)$ on $(\F_{\ell}^T)^{00}$ is faithful.
\end{proof}

\begin{lem}
\label{faitheven}
Suppose that
$$n\ge 5, \  \ell=2, \  2\mid n.$$
Then both $\Perm(T)$-modules $(\F_{2}^T)^{00}$ and $(\F_{2}^T)_0$  are faithful.
\end{lem}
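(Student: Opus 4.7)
The plan is to reduce, as in Lemma \ref{faithodd}, to showing that $(\F_2^T)^{00}$ is faithful, since $(\F_2^T)^{00}$ embeds as a $\Perm(T)$-submodule of $(\F_2^T)_0$, and so faithfulness of the smaller module implies faithfulness of the larger one. The reason Lemma \ref{faithodd}'s argument does not transcribe verbatim is that in characteristic $2$ we have $-1=1$, so the function $[g]\phi-\phi$ used there may well represent $0$ in $(\F_2^T)^{00}$ even when $[g]\phi\neq\phi$. So I would work directly with characteristic functions: for any subset $S\subset T$ with $|S|$ even, let $\phi_S\in(\F_2^T)^0$ be its characteristic function; then $[g]\phi_S=\phi_{g(S)}$, and the image of $\phi_S$ in $(\F_2^T)^{00}$ is $g$-fixed if and only if $g(S)=S$ or $g(S)=T\setminus S$. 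Thus, given a non-identity $g\in\Perm(T)$, it suffices to exhibit an even-sized subset $S\subset T$ with $g(S)\notin\{S,\,T\setminus S\}$.

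I would handle this by a short case analysis, always taking $|S|=2$; since $n\ge 5$, we have $|T\setminus S|=n-2\neq 2$, so $g(S)=T\setminus S$ is automatically ruled out for any such $S$, and only $g(S)\neq S$ needs to be arranged. Pick $t\in T$ with $g(t)\neq t$ and set $s:=g(t)$. In the first case, suppose $g$ has a fixed point $u\in T$; take $S=\{t,u\}$, then $g(S)=\{s,u\}$, and $g(S)=S$ would force $s=t$, a contradiction. In the second case, $g$ is fixed-point-free; if $g$ contains a cycle of length $\ge 3$ passing through $t\to s\to r\to\cdots$, take $S=\{t,s\}$, so $g(S)=\{s,r\}$ with $r\neq t$ (since the cycle has length $\ge 3$), hence $g(S)\neq S$. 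Finally, if every cycle of $g$ is a $2$-cycle, then as $g$ is fixed-point-free and $n\ge 5$ with $n$ even, we have $n\ge 6$ and at least two transpositions, say $(t_1\, s_1)(t_2\, s_2)$; taking $S=\{t_1,t_2\}$ gives $g(S)=\{s_1,s_2\}$, disjoint from $S$, so $g(S)\neq S$.

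In every case we have produced an even-sized $S$ whose image in $(\F_2^T)^{00}$ is not fixed by $g$, proving faithfulness of $(\F_2^T)^{00}$ and hence also of $(\F_2^T)_0$. The main (minor) obstacle is the one flagged above: one must avoid the characteristic-$2$ collapse $-1=1$ by working with characteristic functions rather than differences; the hypothesis $n\ge 5$ enters precisely to exclude the pathological possibility $g(S)=T\setminus S$ when $|S|=2$, and the assumption that $n$ is even is used implicitly via $|S|$ being even (so that $\phi_S\in(\F_2^T)^0$).
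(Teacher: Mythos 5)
Your proof is correct, but it takes a genuinely different and more elementary route than the paper's. The paper invokes the fact (from Mortimer) that $(\F_2^T)^{00}$ is a simple $\Alt(T)$-module, observes that it has dimension $n-2>1$ so the map $\Alt(T)\to\Aut_{\F_2}((\F_2^T)^{00})$ is nontrivial, and then uses the simplicity of $\A_n$ (for $n\ge 5$) together with the normal-subgroup structure of $\ST_n$ to upgrade nontriviality to injectivity on all of $\Perm(T)$. Your argument instead works directly with characteristic functions $\phi_S$ of even subsets $S$: correctly noting that the image of $\phi_S$ in $(\F_2^T)^{00}=(\F_2^T)^0/(\F_2\cdot 1_T)$ is $g$-fixed iff $g(S)\in\{S,\,T\setminus S\}$, and then, by a clean case analysis on the cycle structure of $g$ with $|S|=2$, exhibiting for every nonidentity $g$ a two-element set not preserved in this sense (the condition $n\ge 5$ enters exactly to make $|T\setminus S|\ne|S|$, eliminating the second possibility). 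This sidesteps the characteristic-$2$ pitfall in the proof of the companion Lemma~\ref{faithodd}, which you diagnose correctly. What your approach buys is self-containedness: it needs no external citation to the classification of permutation modules and no appeal to the simplicity of $\A_n$; what the paper's approach buys is brevity, at the cost of heavier external input. Both are correct.
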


\begin{proof}
Since $(\F_{2}^T)^{00}$ is isomorphic to a submodule of  $(\F_{2}^T)_{0}$, it suffices to check the faithfulness of $\Perm(T)$-module $(\F_{2}^T)^{00}$.
Since $\Alt(T)$ is a subgroup of $\Perm(T)$,  $(\F_{2}^T)^{00}$ carries the natural structure of the $\Alt(T)$ -module and it is known \cite{Mortimer} that this module is simple.
Since $\dim_{\F_{2}}((\F_{2}^T)^{00})=n-2\ge  5-2>1$, the corresponding homomorphism  $\Alt(T) \to \Aut_{\F_{2}}((\F_{2}^T)^{00})$ is nontrivial. Since
$\Alt(T) \cong \A_n$ is simple (recall that $n\ge 5$), this homomorphism must be injective.  Since $\A_n$ is the only normal subgroup of $\ST_n\cong \Perm(T)$
(except the trivial one and $\ST_n$ itself), we conclude that the group homomorphism $\Perm(T) \to \Aut_{\F_{2}}((\F_{2}^T)^{00})$ is injective, i.e., $(\F_{2}^T)^{00}$ is
a faithful $\Perm(T)$-module.
\end{proof}

\begin{rem}
The only missing cases not covered by Lemmas \ref{faithodd} and \ref{faitheven} correspond to
$n=\ell=3$ and $n=4,\ell=2$. In both cases the $\Perm(T)$-module $(\F_{2}^T)^{00}$ is  {\sl not} faithful.
\end{rem}

Let $\mathcal{G} \subset \Perm(T)$ be a permutation (sub)group. We may view $\F_{\ell}^T, (\F_{\ell}^T)^{0},(\F_{\ell}^T)^{00}, (\F_{\ell}^T)_0$ as $\F_{\ell}$-linear representations of $\mathcal{G}$.
One may easily check that the $\F_{\ell}$-dimension of the subspace $(\F_{\ell}^T)^{\mathcal{G}}$ of $\mathcal{G} $-invariants equals the number of $\mathcal{G}$-orbits in $T$. In particular,
 $(\F_{\ell}^T)^{\mathcal{G}}=F_\ell\cdot 1_T$ if and only if $G$ is transitive.

The following statement is contained in \cite[Satz 4 and Satz 11]{Klemm}. (In the notation of \cite{Klemm},
$$p=\ell, K=\F_{\ell}, \Omega=T,  M^1=(\F_{\ell}^T)_{0}, M=(\F_{\ell}^T)^{00}. \ )$$

\begin{lem}
\label{double}
\begin{itemize}
\item[(i)]
Suppose that $\ell$ does not divide $n$ and  $\mathcal{G}$ acts transitively on $T$.
Then
$\End_{\mathcal{G}}( (\F_{\ell}^T)^{0})=\F_{\ell}$ if and only if $\mathcal{G}$ is doubly transitive.
\item[(ii)]
Suppose that $\ell$  divides $n$.
If  $\mathcal{G}$ is 3-transitive then
$$\End_{\mathcal{G}}( (\F_{\ell}^T)^{00})=\F_{\ell}.$$
\item[(iii)]
Suppose that $n \ge 4$,    $\mathcal{G}$ acts transitively on $T$ and $\ell$  divides $n$. Suppose that
$\End_{\mathcal{G}}( (\F_{\ell}^T)^{00})$ is a field.
Then either $\ell=2$ and $n$ is congruent to $2$ modulo $4$ or  $\mathcal{G}$ is doubly transitive.
\end{itemize}
\end{lem}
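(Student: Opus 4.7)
The plan is to exploit the standard description of the Hecke algebra: for any transitive permutation group $\mathcal{G}\subset\Perm(T)$ and any field $k$, the orbit sums $E_{\mathcal{O}}(e_t)=\sum_{(t,s)\in\mathcal{O}}e_s$, one for each $\mathcal{G}$-orbit $\mathcal{O}$ on $T\times T$, form a $k$-basis of $\End_{\mathcal{G}}(k^T)$. Consequently $\dim_k\End_{\mathcal{G}}(k^T)$ equals the permutation rank $r$ of $\mathcal{G}$. All three parts will be deduced by tracking how these orbit operators act on the pieces of the natural filtration of $\F_{\ell}^T$.

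For part (i), assume $\ell\nmid n$. Then $\F_{\ell}^T=\F_{\ell}\cdot 1_T\oplus(\F_{\ell}^T)^{0}$ as $\mathcal{G}$-modules, and transitivity together with $n\ne 0$ in $\F_{\ell}$ gives both $((\F_{\ell}^T)^{0})^{\mathcal{G}}=0$ and, dually (using the self-duality of the permutation module), that $(\F_{\ell}^T)^{0}$ has no trivial $\mathcal{G}$-quotient. Hence both cross-Hom's vanish and $\End_{\mathcal{G}}(\F_{\ell}^T)\cong\F_{\ell}\oplus\End_{\mathcal{G}}((\F_{\ell}^T)^{0})$; comparing dimensions gives $\dim_{\F_{\ell}}\End_{\mathcal{G}}((\F_{\ell}^T)^{0})=r-1$, which equals $1$ precisely when $r=2$, i.e.\ when $\mathcal{G}$ is doubly transitive.

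For part (ii), assume $\ell\mid n$ and $\mathcal{G}$ is triply transitive (so $r=2$ and $\End_{\mathcal{G}}(\F_{\ell}^T)$ is spanned by $\mathrm{id}$ and the all-ones operator $J$). Since $J$ sends $\F_{\ell}^T$ into $\F_{\ell}\cdot 1_T$, it induces the zero map on $(\F_{\ell}^T)^{00}$, so the tautological map $\End_{\mathcal{G}}(\F_{\ell}^T)\to\End_{\mathcal{G}}((\F_{\ell}^T)^{00})$ has image exactly $\F_{\ell}$. The remaining task is to rule out $\mathcal{G}$-endomorphisms of the heart that do not arise in this way; 3-transitivity is used first to establish that $(\F_{\ell}^T)^{00}$ is simple as an $\F_{\ell}[\mathcal{G}]$-module (so Schur forces $\End_{\mathcal{G}}((\F_{\ell}^T)^{00})$ to be a finite field extension of $\F_{\ell}$), and then, by examining the induced action on the spanning set $\{\overline{e_s-e_t}\}$ using transitivity on ordered triples of distinct points, to rule out nontrivial extensions and conclude $\End_{\mathcal{G}}((\F_{\ell}^T)^{00})=\F_{\ell}$.

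For part (iii), the converse direction, I argue by contradiction: suppose $\ell\mid n$ and $\mathcal{G}$ is transitive but not doubly transitive, so $r\ge 3$ and there exists a proper off-diagonal $\mathcal{G}$-orbit $\mathcal{O}$ on $T\times T$. The orbit sum $E_{\mathcal{O}}$ commutes with $\mathcal{G}$ and preserves the chain $\F_{\ell}\cdot 1_T\subset(\F_{\ell}^T)^{0}$, so it descends to $\bar E_{\mathcal{O}}\in\End_{\mathcal{G}}((\F_{\ell}^T)^{00})$. A case analysis on the subdegree of $\mathcal{O}$ modulo $\ell$ shows that in every case $\bar E_{\mathcal{O}}$ together with $\mathrm{id}$ generates a non-trivial commutative $\F_{\ell}$-subalgebra of $\End_{\mathcal{G}}((\F_{\ell}^T)^{00})$; the field hypothesis then forces this subalgebra to be a proper field extension of $\F_{\ell}$. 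This is only possible when $\ell=2$ and $\mathcal{O}$ is the graph of a fixed-point-free involution pairing up $T$ in $n/2$ transpositions, and the parity condition $n\equiv 2\pmod 4$ emerges precisely from the requirement that the resulting element squares to $\mathrm{id}$ on the heart without being $\mathrm{id}$ itself, i.e.\ that it generates $\mathbb{F}_4$ over $\mathbb{F}_2$. The main obstacle is that $\F_{\ell}^T$ is non-semisimple when $\ell\mid n$, so orbit counting alone does not control $\End_{\mathcal{G}}((\F_{\ell}^T)^{00})$; one must carefully track which orbit sums act trivially on the heart versus those that produce genuinely new endomorphisms, and then correctly isolate the single exceptional configuration in characteristic $2$ that is compatible with the field property.
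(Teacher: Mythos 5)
The paper does not actually prove this lemma; it cites it as ``contained in [Satz 4 and Satz 11] of Klemm'' (M.~Klemm, \emph{\"Uber die Reduktion von Permutationmoduln}, Math.\ Z.\ {\bf 143} (1975)). So there is no internal proof to compare against, and your proposal should be judged as a self-contained argument.

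Your part (i) is a complete, correct, and clean proof. The Hecke-algebra dimension count $\dim_{\F_\ell}\End_{\mathcal G}(\F_\ell^T)=r$ (the permutation rank) is standard and field-independent, the splitting $\F_\ell^T=\F_\ell 1_T\oplus(\F_\ell^T)^0$ when $\ell\nmid n$ is correct, and the vanishing of both cross-Homs via transitivity and self-duality is handled properly. Nothing to add there.

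Parts (ii) and (iii) are, however, plans rather than proofs, and the gaps are exactly where Klemm's work lives. In (ii), the pivotal assertion that $3$-transitivity of $\mathcal G$ forces $(\F_\ell^T)^{00}$ to be an absolutely simple $\F_\ell[\mathcal G]$-module is stated without argument; this is a nontrivial claim (it is not a formal consequence of Schur's lemma, and for a general $3$-transitive group it requires work — it is essentially the content one must extract from Klemm/Mortimer). Your closing step ``examine the action on $\{\overline{e_s-e_t}\}$ using transitivity on ordered triples'' is plausible but is not carried out, and without it you only get that $\End_{\mathcal G}$ is some finite field, not that it is $\F_\ell$. In (iii), the argument has a genuine logical hole: you take a proper off-diagonal orbit $\mathcal O$ and consider $\bar E_{\mathcal O}$ on the heart, but you never establish that $\bar E_{\mathcal O}$ is \emph{not} a scalar. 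When $\ell$ divides the subdegree of $\mathcal O$ the operator $E_{\mathcal O}$ can even induce a nilpotent or zero map on the heart, and when it does not, nothing a priori prevents $\bar E_{\mathcal O}$ from being a scalar while $\End_{\mathcal G}((\F_\ell^T)^{00})$ is nevertheless larger than $\F_\ell$ (endomorphisms of the heart need not come from the Hecke algebra of $\F_\ell^T$, as you yourself note). The ``case analysis on the subdegree modulo $\ell$'' is precisely the missing technical core, and the isolation of the exceptional case $\ell=2$, $n\equiv 2\ (\mathrm{mod}\ 4)$ with $\mathcal O$ a $\mathcal G$-invariant perfect matching is asserted rather than derived. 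So (i) stands on its own, while (ii) and (iii) remain, as written, reductions to unproven claims that are no easier than the statements themselves.
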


Actually, one may remove the transitivity condition in Lemma \ref{double}(a).

\begin{cor}
\label{doubleNT}
Suppose that $\ell$ does not divide $n$. Then
$\End_{\mathcal{G}}( (\F_{\ell}^T)^{0})=\F_{\ell}$ if and only if $\mathcal{G}$ is doubly transitive.
\end{cor}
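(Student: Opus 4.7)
The plan is to reduce to Lemma \ref{double}(i) by showing that the hypothesis $\End_{\mathcal{G}}((\F_\ell^T)^0)=\F_\ell$ already forces $\mathcal{G}$ to act transitively on $T$; once transitivity is established, Lemma \ref{double}(i) directly supplies the equivalence with double transitivity. The ``if'' direction requires no additional work, since a doubly transitive group is a fortiori transitive and Lemma \ref{double}(i) then gives the conclusion.

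For the ``only if'' direction I would argue by contradiction. Suppose that $\mathcal{G}$ has $r\ge 2$ orbits on $T$. My first step would be to compute the space of $\mathcal{G}$-invariants: functions constant on each orbit span $(\F_\ell^T)^{\mathcal{G}}$, which therefore has $\F_\ell$-dimension $r$. Because $\ell\nmid n$, the $\mathcal{G}$-stable decomposition $\F_\ell^T=\F_\ell\cdot 1_T\oplus (\F_\ell^T)^0$ gives $\dim_{\F_\ell}((\F_\ell^T)^0)^{\mathcal{G}}=r-1\ge 1$, so I can pick a nonzero invariant $v\in\bigl((\F_\ell^T)^0\bigr)^{\mathcal{G}}$.

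My second step would be to exploit the $\Perm(T)$-invariant symmetric form $\langle\phi,\psi\rangle=\sum_{t\in T}\phi(t)\psi(t)$, whose restriction to $(\F_\ell^T)^0$ is non-degenerate precisely because $\ell\nmid n$ (the orthogonal complement $\F_\ell\cdot 1_T$ of $(\F_\ell^T)^0$ in $\F_\ell^T$ meets $(\F_\ell^T)^0$ trivially under that hypothesis). The rank-one operator $f(\phi):=\langle\phi,v\rangle v$ on $(\F_\ell^T)^0$ is then $\mathcal{G}$-equivariant, with image the line $\F_\ell\cdot v$. Since $\dim_{\F_\ell}(\F_\ell^T)^0=n-1\ge 2$, no nonzero scalar operator has rank one, so $f$ is a non-scalar element of $\End_{\mathcal{G}}((\F_\ell^T)^0)$, contradicting the hypothesis. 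This forces $r=1$, i.e.\ transitivity, and Lemma \ref{double}(i) completes the proof.

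The only slightly delicate point---and therefore the main (though mild) obstacle---is verifying the non-degeneracy of the restricted pairing, which is where the assumption $\ell\nmid n$ enters in an essential way. Everything else is straightforward bookkeeping with orbits and invariants.
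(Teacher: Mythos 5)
Your argument is correct, and it follows the same overall strategy as the paper: reduce to verifying that the hypothesis forces transitivity, and then invoke Lemma~\ref{double}(i). Where you diverge is in how the non-scalar endomorphism is produced in the non-transitive case. The paper writes one down explicitly: for a $\mathcal{G}$-stable partition $T=T_1\sqcup T_2$ with $\ell\nmid n_1=|T_1|$, it takes $u(\phi)$ to be the function with value $n_1\sum_{t\in T_2}\phi(t)$ on $T_2$ and $-n_2\sum_{t\in T_2}\phi(t)$ on $T_1$, and observes that its image is the line spanned by the (nonconstant) function $\psi$ equal to $n_1$ on $T_2$ and $-n_2$ on $T_1$. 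You instead note that non-transitivity yields a nonzero invariant vector $v\in\bigl((\F_\ell^T)^0\bigr)^{\mathcal{G}}$ and form the rank-one equivariant map $\phi\mapsto\langle\phi,v\rangle v$, where $\langle\cdot,\cdot\rangle$ is the standard invariant pairing, whose restriction to $(\F_\ell^T)^0$ is nondegenerate because $\ell\nmid n$. These are really the same construction: with $v$ the function that is $n_1$ on $T_2$ and $-n_2$ on $T_1$, one computes $\langle\phi,v\rangle = n\sum_{t\in T_2}\phi(t)$ for $\phi\in(\F_\ell^T)^0$, so the paper's $u$ is $n^{-1}$ times your $f$. What your packaging buys is conceptual clarity and generality: it isolates the two ingredients (a nonzero invariant in $(\F_\ell^T)^0$, a nondegenerate invariant pairing) that make the argument work, without a case split over which $n_i$ is prime to $\ell$, whereas the paper's explicit formula avoids appealing to the bilinear form. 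Either version is complete.
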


\begin{proof}
Recall that $n \ge 3$. In light of Lemma \ref{double}(a), we need to check only the transitivity of  $\mathcal{G}$  if
$\End_{\mathcal{G}}( (\F_{\ell}^T)^{0})=\F_{\ell}$.

Suppose that  $\mathcal{G}$  is {\sl not} transitive,  i.e., one may split $T$ into a disjoint union $T=T_1\cup T_2$ of two nonempty $\mathcal{G}$-stable
subsets $T_1$ and $T_2$. If we put $n_i=|T_i|$ then $n_1+n_2=n$ and both $n_i \ge 1$. Since $\ell$ does {\sl not} divide $n$, it does not divide, at least, one of $n_i$.
We may assume that $\ell$ does {\sl not} divide $n_1$. Let us consider $u \in \End_{\mathcal{G}}( (\F_{\ell}^T)^{0})$ that is defined as follows.
For each $\phi \in (\F_{\ell}^T)^{0}$
the function $u(\phi)$ takes the  value $n_1 \left (\sum_{t\in T_2}\phi(t) \right)$ at every point of $T_2$ and  takes the  value $-n_2\left (\sum_{t\in T_2}\phi(t) \right)$ at every point of $T_1$.
Clearly, the image of $u$ is the one-dimension subspace of $(\F_{\ell}^T)^{0}$ that is generated by the function
$$\psi: T \to \F_{\ell}, \  \psi(t_2)= n_1 \ \forall t_2\in T_2, \   \psi(t_1)=- n_2 \ \forall t_1\in T_1.$$
Since $\dim_{\F_{\ell}}((\F_{\ell}^T)^{0})>1$, $u$ is {\sl not} a scalar and we get  a desired contradiction.
\end{proof}

The following assertion is a special case of \cite[ Lemma 2  on p. 3]{Mortimer}.

\begin{lem}
\label{listS}
Suppose that $\ell\mid n$, $\mathcal{G}$ is transitive and the $\mathcal{G}$-module  $(\F_{\ell}^T)^{00}$
is simple. Then the list of $\mathcal{G}$ -invariant subspaces of $\F_{\ell}^T$ consists of
$\{0\}, \F_{\ell}^T, \F_\ell\cdot 1_T,  (\F_{\ell}^T)^0$.
\end{lem}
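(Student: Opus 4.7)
I plan to prove Lemma \ref{listS} by analyzing how any $\mathcal{G}$-stable subspace $W \subseteq \F_\ell^T$ sits relative to the filtration $0 \subset \F_\ell \cdot 1_T \subset (\F_\ell^T)^0 \subset \F_\ell^T$, whose successive quotients are $\F_\ell$ (trivial), the simple module $(\F_\ell^T)^{00}$, and $\F_\ell$ (trivial). Setting $W_0 := W \cap (\F_\ell^T)^0$, the injection $W/W_0 \hookrightarrow \F_\ell^T/(\F_\ell^T)^0 \cong \F_\ell$ forces $W/W_0 \in \{0,\F_\ell\}$, while the image of $W_0$ in $(\F_\ell^T)^{00}$ is $\{0\}$ or all of $(\F_\ell^T)^{00}$ by simplicity.

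A short case analysis shows that $W$ lies in $\{0,\F_\ell\cdot 1_T,(\F_\ell^T)^0,\F_\ell^T\}$ unless one of two exceptional situations occurs: (a) $W_0$ is a $\mathcal{G}$-stable complement of $\F_\ell\cdot 1_T$ inside $(\F_\ell^T)^0$, or (b) $W$ is $2$-dimensional, contains $\F_\ell\cdot 1_T$, and has $\epsilon|_W\ne 0$. The potentially-extra sub-case $\dim W = 1$, $W \not\subset (\F_\ell^T)^0$ is immediately ruled out by transitivity: the $\mathcal{G}$-action on such a $W$ must preserve the nonzero augmentation and hence be trivial, forcing $W\subseteq (\F_\ell^T)^{\mathcal{G}}=\F_\ell\cdot 1_T\subset (\F_\ell^T)^0$, contradicting $W\not\subset(\F_\ell^T)^0$. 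Any ``extended'' exceptional case (an $(n-1)$-dimensional $W$ containing a complement $W'$ from (a) but not equal to $(\F_\ell^T)^0$) would require (a) itself to occur, so ruling out (a) suffices.

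The situations (a) and (b) correspond respectively to $\mathcal{G}$-splittings of the short exact sequences
\[
0\to\F_\ell\cdot 1_T\to (\F_\ell^T)^0\to (\F_\ell^T)^{00}\to 0 \quad\text{and}\quad 0\to (\F_\ell^T)^{00}\to (\F_\ell^T)_0\to \F_\ell\to 0.
\]
Under the $\mathcal{G}$-invariant nondegenerate symmetric pairing $\langle\phi,\psi\rangle=\sum_{t\in T}\phi(t)\psi(t)$ on $\F_\ell^T$, one has $(\F_\ell\cdot 1_T)^\perp=(\F_\ell^T)^0$ and $(\F_\ell^T)^{00}$ is self-dual, so these two sequences are $\F_\ell$-linear duals of each other; a splitting of one is equivalent to a splitting of the other, and it suffices to exclude (a). I would invoke the transitivity of $\mathcal{G}$ on $T$, which identifies $\F_\ell^T\cong\mathrm{Ind}_{\mathcal{G}_{t_0}}^{\mathcal{G}}\F_\ell$, and Frobenius reciprocity to see $\Hom_{\mathcal{G}}(\F_\ell^T,\F_\ell)=\F_\ell\cdot\epsilon$. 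A splitting of (a) would produce a nonzero $\mathcal{G}$-equivariant $\pi:(\F_\ell^T)^0\to\F_\ell$ with $\pi(1_T)=1$, corresponding to an additive homomorphism $c:\mathcal{G}\to\F_\ell$ vanishing on $\mathcal{G}_{t_0}$ and satisfying $\pi(1_T)=\sum_{t\in T}c(g_t)$ for any coset representatives $g_t$ with $g_t\cdot t_0=t$.

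The main obstacle, and the step where simplicity of $(\F_\ell^T)^{00}$ does the work, is to show $\pi(1_T)=0$. I expect to accomplish this by observing that the function $\bar c(t):=c(g_t)$ satisfies $g\bar c -\bar c=-c(g)\cdot 1_T$, hence represents a $\mathcal{G}$-invariant class $[\bar c]\in ((\F_\ell^T)_0)^{\mathcal{G}}$ whose augmentation is $\pi(1_T)$. If $\pi(1_T)\ne 0$ the class $[\bar c]$ would give a $\mathcal{G}$-equivariant section of the dual sequence (b), producing a decomposition $\F_\ell^T=W'\oplus\tilde L$ as $\mathcal{G}$-modules with $W'\cong(\F_\ell^T)^{00}$ simple and $\tilde L$ a $2$-dimensional non-semisimple extension of the trivial module by itself; such a decomposition is incompatible with $\F_\ell^T$ being the cyclic $\mathcal{G}$-module generated by $\delta_{t_0}$ (via Frobenius reciprocity applied to the generator, which pins down the socle and head of $\F_\ell^T$), delivering the required contradiction.
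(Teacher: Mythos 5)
Your reduction is correct and essentially complete up to the final step: the filtration case analysis, the elimination of a one\-/dimensional $W\not\subset(\F_\ell^T)^0$ by transitivity, the duality between the exceptional situations (a) and (b) via the invariant form, the homomorphism $c:\mathcal{G}\to\F_\ell$ vanishing on $\mathcal{G}_{t_0}$, and the identity $\pi(1_T)=\sum_{t\in T}c(g_t)$ are all sound. The gap is the last sentence. A decomposition $\F_\ell^T=W'\oplus\tilde L$ with $W'$ simple nontrivial and $\tilde L$ a two-dimensional non-split self-extension of the trivial module is \emph{not} incompatible with $\F_\ell^T$ being cyclic, nor with $\dim\Hom_{\mathcal{G}}(\F_\ell,\F_\ell^T)=\dim\Hom_{\mathcal{G}}(\F_\ell^T,\F_\ell)=1$: such a module has socle $W'\oplus\F_\ell\cdot 1_T$ and head $W'\oplus\F_\ell$, each containing the trivial module exactly once, and its head is a sum of two non-isomorphic simple modules, hence cyclic. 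No contradiction can be extracted at this point.

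In fact none exists, because the statement fails exactly where your computation leaves room. Putting $N=\ker(c)\supseteq\mathcal{G}_{t_0}$ (and assuming $c\neq 0$, the case $c=0$ being immediate), the map $\mathcal{G}/\mathcal{G}_{t_0}\to\mathcal{G}/N\cong\F_\ell$ has fibres of equal size $n/\ell$, so
$$\pi(1_T)=\sum_{t\in T}c(g_t)=\frac{n}{\ell}\sum_{a\in\F_\ell}a=\frac{n}{\ell}\cdot\frac{\ell(\ell-1)}{2}\in\F_\ell,$$
which vanishes for odd $\ell$, and also for $\ell=2$ with $4\mid n$, but equals $1$ when $\ell=2$ and $n\equiv 2\pmod 4$. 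In that last case there are genuine counterexamples: for $\ell=2$, $n=6$ and $\mathcal{G}=\ST_3\wr\ST_2$ acting imprimitively on $T=B_1\sqcup B_2$ with $|B_i|=3$, the heart $(\F_2^T)^{00}$ is isomorphic to the $4$-dimensional module of functions having zero sum on each block, which is induced from a $2$-dimensional absolutely simple module of $\ST_3\times\ST_3$ and is therefore (absolutely) simple; yet the span of the block indicators $1_{B_1},1_{B_2}$ is a $\mathcal{G}$-invariant $2$-plane not on the list. So your scheme, completed by the displayed computation instead of the cyclicity argument, proves the lemma for odd $\ell$ and for $4\mid n$; the remaining case $\ell=2$, $n\equiv 2\pmod 4$ requires an additional hypothesis such as double transitivity (under which an extra direct summand would force $\dim_{\F_\ell}\End_{\mathcal{G}}(\F_\ell^T)\ge 3$, contradicting rank $2$). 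Compare the exceptional case already recorded in Lemma \ref{double}(iii).
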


This lemma implies readily the following corollary.

\begin{cor}
\label{listM}
Suppose that $\ell\mid n$, $\mathcal{G}$ is transitive and the $\mathcal{G}$-module  $(\F_{\ell}^T)^{00}$
is simple. Then the list of $\mathcal{G}$ -invariant subspaces of $(\F_{\ell}^T)_0$ consists of
$\{0\}, (\F_{\ell}^T)^{00}, \F_{\ell}^T)_0$.

\end{cor}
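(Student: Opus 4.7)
The plan is to derive Corollary \ref{listM} as a direct consequence of Lemma \ref{listS} via the lattice isomorphism theorem applied to the quotient $(\F_{\ell}^T)_0 = \F_{\ell}^T/(\F_\ell \cdot 1_T)$.

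First I would observe that, because $\ell \mid n$, the constant function $1_T$ has sum $n \equiv 0 \pmod{\ell}$, so $\F_\ell \cdot 1_T \subset (\F_{\ell}^T)^0$; this justifies working with the chain $\{0\} \subset \F_\ell \cdot 1_T \subset (\F_{\ell}^T)^0 \subset \F_{\ell}^T$. Then I would invoke the correspondence theorem for modules: the $\mathcal{G}$-invariant subspaces of the quotient $(\F_{\ell}^T)_0$ are in bijection with those $\mathcal{G}$-invariant subspaces of $\F_{\ell}^T$ which contain $\F_\ell \cdot 1_T$, the bijection being given by $W \mapsto W/(\F_\ell \cdot 1_T)$.

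Next I would consult the list furnished by Lemma \ref{listS}, namely $\{0\},\ \F_\ell \cdot 1_T,\ (\F_{\ell}^T)^0,\ \F_{\ell}^T$, and single out those members that contain $\F_\ell \cdot 1_T$: these are precisely $\F_\ell \cdot 1_T$, $(\F_{\ell}^T)^0$ and $\F_{\ell}^T$. Passing to quotients by $\F_\ell \cdot 1_T$ produces, respectively,
\[
\{0\},\qquad (\F_{\ell}^T)^0/(\F_\ell \cdot 1_T) = (\F_{\ell}^T)^{00},\qquad \F_{\ell}^T/(\F_\ell \cdot 1_T) = (\F_{\ell}^T)_0,
\]
which is exactly the asserted list. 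There is no real obstacle here — the only point requiring (trivial) verification is the inclusion $\F_\ell \cdot 1_T \subset (\F_{\ell}^T)^0$, which uses the hypothesis $\ell \mid n$ in an essential way and makes the correspondence apply to the correct chain.
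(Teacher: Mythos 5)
Your proof is correct, and it is exactly the argument the paper has in mind when it says that Lemma \ref{listS} ``implies readily'' the corollary: the correspondence theorem for the quotient by $\F_\ell\cdot 1_T$ transports the four-element lattice of Lemma \ref{listS} to the three-element lattice of invariant subspaces of $(\F_\ell^T)_0$.
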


\begin{thm}
\label{xueCase}
Suppose that $\ell\mid n$,  $\mathcal{G}$ is transitive  and the  $\mathcal{G}$-module  $(\F_{\ell}^T)^{00}$ is absolutely simple.
Then
$$\End_{\mathcal{G}}\left(\left (\F_{\ell}^T\right)_0\right)=\F_{\ell}.$$
\end{thm}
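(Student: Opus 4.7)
The plan is to combine Corollary \ref{listM}, which identifies the $\mathcal{G}$-submodules of $V := (\F_{\ell}^T)_0$ as exactly $\{0\}$, $W := (\F_{\ell}^T)^{00}$, and $V$ itself, with Schur's lemma. The structural input is the short exact sequence of $\mathcal{G}$-modules
$$0 \longrightarrow W \longrightarrow V \longrightarrow V/W \longrightarrow 0,$$
in which the quotient $V/W$ is canonically isomorphic, via the sum functional $\F_\ell^T \to \F_\ell$, to the trivial one-dimensional $\mathcal{G}$-module $\F_\ell$.

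Let $u \in \End_{\mathcal{G}}(V)$. I first observe that $u(W)$ is a $\mathcal{G}$-submodule of $V$, so by Corollary \ref{listM} it lies in $\{0, W, V\}$; the possibility $u(W) = V$ is ruled out by the strict dimension inequality $\dim W = n-2 < n-1 = \dim V$, hence $u(W) \subseteq W$. Because $W$ is absolutely simple, Schur's lemma forces $u|_W = c \cdot \mathrm{id}_W$ for a unique $c \in \F_\ell$. Replacing $u$ by $u - c \cdot \mathrm{id}_V \in \End_{\mathcal{G}}(V)$, I may assume that $u$ kills $W$, so $u$ factors through $V/W \cong \F_\ell$ as a $\mathcal{G}$-equivariant map; since the target is trivial, the image of $u$ consists of $\mathcal{G}$-invariants, i.e.\ lies in $V^{\mathcal{G}}$.

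The crux is therefore to show $V^{\mathcal{G}} = 0$. By Corollary \ref{listM} once more, $V^{\mathcal{G}} \in \{0, W, V\}$. The case $V^{\mathcal{G}} = V$ is excluded directly: transitivity of $\mathcal{G}$ with $|T| = n \ge 3$ yields some $g \in \mathcal{G}$ and $t \in T$ with $gt \ne t$, and picking a third point $s \in T$ one checks that the class of $\delta_t - \delta_s$ in $V$ is not $g$-fixed, since the difference $[g](\delta_t - \delta_s) - (\delta_t - \delta_s)$ cannot be a constant function on a set of $\ge 3$ elements. The case $V^{\mathcal{G}} = W$ would force $\mathcal{G}$ to act trivially on the absolutely simple module $W$, which by Schur forces $\dim_{\F_\ell} W = 1$ and hence $n = 3$. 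I expect this low-dimensional edge case to be the main obstacle, and it is the only place where the absolute simplicity hypothesis must be supplemented (either by $n \ge 4$ or by some non-triviality of the $\mathcal{G}$-action on $W$) in order to conclude $V^{\mathcal{G}} \neq W$. Granting $V^{\mathcal{G}} = 0$, we obtain $u = c \cdot \mathrm{id}_V$, and hence $\End_{\mathcal{G}}(V) = \F_\ell$.
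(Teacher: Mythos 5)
Your proposal is correct and takes essentially the same route as the paper: apply Corollary \ref{listM} to force $u(W)\subseteq W$, use absolute simplicity and Schur's lemma to write $u|_W=c\cdot\mathrm{id}_W$, then show the image of $u-c$ vanishes. Your observation that $u-c$ factors through the trivial quotient $V/W\cong\F_\ell$, so its image lies in $V^{\mathcal{G}}$, is a nice conceptual sharpening of the paper's bare dimension count, but it leads to the same conclusion.

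More importantly, the edge case you flag is real. With the section's standing hypothesis $n\ge 3$, the theorem as stated actually fails at $n=3$. Take $\ell=3$, $T=\{1,2,3\}$, $\mathcal{G}=\langle(1\,2\,3)\rangle\cong\Z/3\Z$. Then $(\F_3^T)^{00}$ is one-dimensional, hence automatically absolutely simple, and $\mathcal{G}$ is transitive. But in the basis $\bar\delta_1,\bar\delta_2$ of $(\F_3^T)_0$ the generator acts by $M=\begin{pmatrix}0&-1\\1&-1\end{pmatrix}$, whose characteristic polynomial over $\F_3$ is $(\lambda-1)^2$ and which is a \emph{non-scalar} unipotent; since $\mathcal{G}$ is cyclic, $M$ itself is a $\mathcal{G}$-endomorphism of $(\F_3^T)_0$ lying outside $\F_3$, so $\End_{\mathcal{G}}((\F_3^T)_0)=\F_3[M]\supsetneq\F_3$. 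The paper's own proof has precisely the same gap at its last step: concluding ``$W=\{0\}$'' from Corollary \ref{listM} and $\dim W\le 1$ silently requires $\dim(\F_\ell^T)^{00}=n-2>1$, i.e.\ $n\ge 4$. So the fix is exactly what you propose: add the hypothesis $n\ge 4$ (or, equivalently in this setting, that $\mathcal{G}$ acts nontrivially on $(\F_\ell^T)^{00}$), after which both your argument and the paper's go through; this is harmless in the paper's applications, where $n\ge4$ or $n\ge 5$ is always assumed.
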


\begin{proof}
The absolute simplicity of $(\F_{\ell}^T)^{00}$ implies that
$$\End_{\mathcal{G}}( (\F_{\ell}^T)^{00})=\F_{\ell}.$$
Let
$$u \in \End_{\mathcal{G}}( (\F_{\ell}^T)_0).$$
We need to prove that $u \in \F_{\ell}$, i.e., $u$ is a scalar.
Then $u((\F_{\ell}^T)^{00}) \subset  (\F_{\ell}^T)_0$ is a $\mathcal{G}$-invariant subspace of $(\F_{\ell}^T)_0$ of dimension $\le n-2$.
It follows from Corollary \ref{listM} that $u((\F_{\ell}^T)^{00})\subset (\F_{\ell}^T)^{00}$. Since $\End_{\mathcal{G}}( (\F_{\ell}^T)^{00})=\F_{\ell}$,
there is $a \in \F_{\ell}$ such that the restriction of $u$ to  $(\F_{\ell}^T)^{00}$ coincides with multiplication by $a$, i.e.,
$(u-a) ((\F_{\ell}^T)^{00})=\{0\}$. Since $(\F_{\ell}^T)^{00}$ has codimension 1 in $(\F_{\ell}^T)_0$, the image
$W:=(u-a)((\F_{\ell}^T)_0)$ has dimension $ \le 1$. Since $W$ is obviously $\mathcal{G}$-stable, it follows from from Corollary \ref{listM} that $W=\{0\}$,
i.e., $u-a=0$, which in turn means that $u=a$, i.e., is a scalar. This ends the proof.
\end{proof}

\begin{ex}
Suppose that $\ell\mid n$ and $n\ge 5$. If $\mathcal{G}=\Perm(T)$ or $\Alt(T)$ then $\mathcal{G}$ is transitive and
 the  $\mathcal{G}$-module  $(\F_{\ell}^T)^{00}$ is absolutely simple \cite{Mortimer}.  By Theorem \ref{xueCase},
 $$\End_{\mathcal{G}}\left(\left(\F_{\ell}^T\right)_0\right)=\F_{\ell}.$$
 This assertion is actually contained in Lemma 3.7 of \cite[p. 339]{Xue}.
\end{ex}

 \section{Superelliptic jacobians}
\label{superE}
The aim of this section is to apply results of Section \ref{tangentS} to endomorphism algebras of superelliptic jacobians,
using group-theoretic constructions of Section \ref{permutation}.

Let $p$ be a prime, $r$ a positive integer, $q=p^r$ and $\zeta_q \in \C$ be a primitive $q$th root of unity,
$E:=\Q(\zeta_q)\subset \C$ the
$q$th cyclotomic field and $\OC:=\Z[\zeta_q]$ the ring of integers in $\Q(\zeta_q)=E$.

Let us assume that $\fchar(K)\ne p$ and $K$ contains a primitive
$q$th root of unity $\zeta$. Let $f(x) \in K[x]$ be a polynomial
of degree $n\ge 3$ without multiple roots,
 $\RR_f\subset K_a$ the ($n$-element) set of roots of $f$ and $K(\RR_f)\subset K_a$
the splitting field of $f$. We write $\Gal(f)=\Gal(f/K)$ for the
Galois group $\Gal(K(\RR_f)/K)$ of $f$; it permutes the roots of
$f$ and may be viewed as a certain permutation group of $\RR_f$,
i.e., as  a subgroup of the group $\Perm(\RR_f)\cong\Sn$ of
permutations of $\RR_f$. (The transitivity of $\Gal(f)$ is equivalent to the irreducibility of $f(x)$.)
 There is the canonical surjection
$$\Gal(K) \twoheadrightarrow \Gal(K(\RR_f)/K)=\Gal(f).$$
In particular, we may view $\Gal(f)$-modules
$$\F_p^{\RR_f},  (\F_p^{\RR_f})^{0},  (\F_p^{\RR_f})^{00}, (\F_p^{\RR_f})_0$$
as $\Gal(K)$-modules.



 Let $C_{f,q}$ be a smooth projective model of the smooth affine $K$-curve
$y^q=f(x)$.
 The map
 $(x,y) \mapsto (x, \zeta y)$
gives rise to a non-trivial birational $K$-automorphism $\delta_q:
C_{f,q} \to C_{f,q}$ of period $q$. The jacobian $J(C_{f,q})$ of
$C_{f,q}$ is an abelian variety that is defined over $K$.  By Albanese
functoriality, $\delta_q$ induces an automorphism of $J(C_{f,q})$
which we still denote by $\delta_p$. It is known (\cite[p.~149]{Poonen}, \cite[p.~458]{SPoonen}, \cite{ZarhinM,ZarhinPisa},\cite[Lemma 2.6]{Xue})
that $\delta_q$ satisfies
$$\mathcal{P}_q(\delta_q)=0 \in \End(J(C_{f,q}))$$
where the polynomial
$$\mathcal{P}_q(t)=\frac{t^q-1}{t-1}=t^{q-1}+ \dots +1 \in \Z[t].$$
Notice that
$$\mathcal{P}(t)=\prod_{j=1}^r \Phi_{p^j}(t)$$
where $\Phi_{p^j}(t)\in \Z[t]$ is the $p^j$th cyclotomic polynomial of degree $(p-1)p^{j-1}$.

Let us consider the abelian $K$-subvariety $J^{(f,q)}$ of $J(C_{f,q})$ defined as follows.
$$J^{(f,q)}=\mathcal{P}_{q/p}(\delta_q)((C_{f,q})) \subset J(C_{f,q}).$$
It is known \cite{ZarhinM,ZarhinMZ2,ZarhinPisa,Xue} that $J^{(f,q)}$ is positive-dimensional and
$J(C_{f,q})$ is $K$-isogenous to a product $\prod_{j=1}^r J^{(f,p^j)}$. E.g., if $q=p$ (i.e, $r=1$) then $J(C_{f,p})=J^{(f,p)}$.
(See also \cite{WXY}.)

Clearly, $J^{(f,q)}$ is $\delta_q$-invariant and
$$\Phi_q(\delta_q)(J^{f,q})=\{0\}.$$ This gives rise to the embedding
$$\i: \Z[\zeta_q] \to \End_K(J^{(f,q)})$$
that sends $1$ to $1_{J^{(f,q)}}$ and $\zeta_q$ to the restriction of $\delta_q$ to $J^{(f,q)}$.

 Extending $i$ by $\Q$-linearity to the $\Q$-algebra embedding
$$i:E=\Q(\zeta_q) \hookrightarrow \End_K^{0}(J^{(f,q)}),$$
which we continue to denote by $i$.    Recall that
 $$[E:\Q]=[\Q(\zeta_q):\Q]=(p-1)p^{r-1}.$$
The dimension of $J^{(f,q)}$ and $d_{J^{(f,q)},E}$ are as follows
\cite{Poonen,SPoonen,ZarhinM,ZarhinPisa,ZarhinMZ2,Xue}.

\begin{itemize}
\item[(i)]
If $p$ does {\sl not} divide $n$ then
 $$2\dim\left(J^{f,q}\right)=(n-1)(p^r-p^{r-1}),  \ d_{J^{(f,q)},E}=n-1.$$
\item[(ii)]
If $q$ divides $n$ then
 $$2\dim\left(J^{(f,q)}\right)=(n-2)(p^r-p^{r-1}),  \ d_{J^{f,q},E}=n-2.$$
 (These equalities follow from (i) combined with  \cite[Remark 4.3 on p. 352]{ZarhinM}).
\item[(iii)]
If $p$ divides $n$ but $q$ does {\sl not} divide $n$ then \cite{Xue}
$$2\dim\left(J^{(f,q)}\right)=(n-1)(p^r-p^{r-1}),  \ d_{J^{(f,q)},E}=n-1.$$
\end{itemize}

Let $\lambda$ be the maximal principal ideal $(1-\zeta_q)\Z[\zeta_q]$ in $Z[\zeta_q]=\OC$. Its residue field $k(\lambda)=\F_p$.

Here is an explicit description of the Galois module $J^{f,q}_{\lambda}$  \cite{Poonen,SPoonen,ZarhinM,ZarhinPisa,ZarhinMZ2,Xue}.

\begin{itemize}
\item[(0)] If $(n,p)$ is neither $(3,3)$ nor $(4,2)$ then
$$\tilde{G}_{\lambda,J^{(f,q)},K}\cong \Gal(f).$$
\item[(i)]
If $p$ does {\sl not} divide $n$ then $J^{(f,q)}_{\lambda}$ is isomorphic to $(\F_p^{\RR_f})^{0}$ \cite[Lemma 4.11]{ZarhinM}.
(When $p=q$ this assertion was proven in \cite{SPoonen}.)
\item[(ii)]
If $q$ divides $n$ then  $J^{(f,q)}_{\lambda}$ is isomorphic to $(\F_p^{\RR_f})^{00}$, see Theorem \ref{qdividesn} below. ( When $q=p$ this assertion was proven in \cite{Poonen}).
\item[(iii)]
If $p$ divides $n$ but $q$ does {\sl not} divide $n$ then $J^{(f,q)}_{\lambda}$ is isomorphic to $(\F_p^{\RR_f})_{0}$ \cite{Xue}.
\footnote{J. Xue \cite{Xue} assumed that $\fchar(K)=0$. However, all his arguments related to the computation of
$\dim\left(J^{(f,q)}\right)$ and
 $J^{(f,q)}_{\lambda}$
work under a weaker assumption that $\fchar(K)\ne p$.}

\end{itemize}

The results of Section  \ref{permutation} imply readily the following statement.

\begin{lem}
\label{Jlambda}
Suppose that $(n,p)$ is neither $(3,3)$ nor $(4,2)$.  Then the following conditions hold.
\begin{itemize}
\item[(A)] The group $\tilde{G}_{\lambda,J^{(f,q)},K}$ is isomorphic to $\Gal(f)$.
\item[(B)]  If $p$ does not divide $n$ and $\Gal(f)$ is doubly transitive then
$$\End_{\tilde{G}_{\lambda,J^{(f,q)},K}}(J^{(f,q)}_{\lambda})=\F_p.$$
\item[(C)]  If $q$ divides $n$ and either $\Gal(f)$ is 3-transitive or
$$\End_{\Gal(f)}((\F_p^{\RR_f})^{00})=\F_p$$
then
$$\End_{\tilde{G}_{\lambda,J^{(f,q)},K}}(J^{(f,q)}_{\lambda})=\F_p.$$
\item[(D)]  Suppose that $p$ divides $n$ but  $q$ does not divide $n$. Assume also that $\Gal(f)$ is transitive (i.e., $f(x)$ is irreducible over $K$)
and the $\Gal(f)$-module $(\F_p^{\RR_f})^{00}$ is absolutely simple.
Then
$$\End_{\tilde{G}_{\lambda,J^{(f,q)},K}}(J^{(f,q)}_{\lambda})=\F_p.$$
\end{itemize}
\end{lem}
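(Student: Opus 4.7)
The plan is to read off all four parts as immediate consequences of the explicit description of the Galois module $J^{(f,q)}_{\lambda}$ recalled in the three numbered cases (i)--(iii) preceding the lemma, combined with the module-theoretic results of Section \ref{permutation}. The hypothesis $(n,p)\notin\{(3,3),(4,2)\}$ is precisely what makes item (0) of that description applicable, so the faithfulness results (Lemmas \ref{faithodd} and \ref{faitheven}) guarantee that the natural surjection $\Gal(K)\twoheadrightarrow\Gal(f)$ induces an \emph{isomorphism} $\tilde{G}_{\lambda,J^{(f,q)},K}\cong \Gal(f)$ via its action on the appropriate quotient/subquotient of $\F_p^{\RR_f}$. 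This disposes of (A), and from this point on I will freely identify $\tilde{G}_{\lambda,J^{(f,q)},K}$ with $\Gal(f)\subset \Perm(\RR_f)$ and the $\Gal(K)$-module $J^{(f,q)}_{\lambda}$ with the corresponding permutational module.

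For (B), I will invoke case (i) of the description, which gives $J^{(f,q)}_{\lambda}\cong (\F_p^{\RR_f})^{0}$ as a $\Gal(f)$-module. Since $p\nmid n$, Corollary \ref{doubleNT} applies and says that double transitivity of $\Gal(f)$ forces $\End_{\Gal(f)}\bigl((\F_p^{\RR_f})^{0}\bigr)=\F_p$, which under the identification of (A) gives the conclusion.

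For (C), case (ii) of the description gives $J^{(f,q)}_{\lambda}\cong (\F_p^{\RR_f})^{00}$. If $\Gal(f)$ is $3$-transitive then Lemma \ref{double}(ii) (note $p\mid n$ in this case) immediately yields $\End_{\Gal(f)}\bigl((\F_p^{\RR_f})^{00}\bigr)=\F_p$; in the alternative branch this equality is simply assumed, and in either event (A) transports it to the required statement about $\tilde{G}_{\lambda,J^{(f,q)},K}$.

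For (D), case (iii) of the description gives $J^{(f,q)}_{\lambda}\cong (\F_p^{\RR_f})_0$, and since $p\mid n$ while $\Gal(f)$ is transitive and $(\F_p^{\RR_f})^{00}$ is absolutely simple, Theorem \ref{xueCase} applies and gives $\End_{\Gal(f)}\bigl((\F_p^{\RR_f})_0\bigr)=\F_p$, which by (A) is exactly what we need. In short, the entire lemma is a bookkeeping corollary, and the only ``real'' inputs are the three module identifications recalled before the statement (proven in the cited papers of Poonen, Xue and the author) together with the three endomorphism-algebra computations of Section \ref{permutation}; I do not expect any genuine obstacle, the only thing to watch is to verify in each branch that the numerical hypothesis on $n$ modulo $p$ is the one under which the corresponding lemma of Section \ref{permutation} was stated.
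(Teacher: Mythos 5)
Your proposal is correct and matches what the paper intends by ``implies readily'': (A) is exactly item (0) of the preceding description (underpinned by the faithfulness observations of Section~\ref{permutation}, which is where the exclusion of $(3,3)$ and $(4,2)$ comes from), and (B), (C), (D) follow by transporting Corollary~\ref{doubleNT}, Lemma~\ref{double}(ii), and Theorem~\ref{xueCase} respectively through the identifications of $J^{(f,q)}_\lambda$ with $(\F_p^{\RR_f})^0$, $(\F_p^{\RR_f})^{00}$, and $(\F_p^{\RR_f})_0$. The only minor imprecision is attributing faithfulness in the $p\nmid n$ case to Lemmas~\ref{faithodd}/\ref{faitheven} (which treat only $\ell\mid n$); in that case the faithfulness of $(\F_p^{\RR_f})^0$ is the separate observation made just after the definition of $(\F_\ell^T)^0$.
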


Now let us assume that $\fchar(K)=0$.
 Here are the explicit formulas for $n_{J^{(f,q)},i}$.
Let
$$n=kq+c, \ k,c \in \Z_{+}, \ 0 \le c<q.$$

\begin{itemize}
\item[(i)]
Suppose that  $p$ does {\sl not} divide $n$, i.e., $c\ge 1$.
Then $n_{J^{(f,q)},i}$ are as follows \cite[ Sections 4 and 5, especially, Remark 4.1  and Lemma 5.1]{ZarhinMZ2,ZarhinMZ2arxiv}.
\begin{enumerate}
\item[(1)]
if $n=kq+1$ (i.e., $c=1$) then $n_{J^{(f,q)},i}=k$.
\item[(2)]
If $p$ is odd and $n-1$ is {\sl not} divisible by $q$ (i.e., $c>1$) then  $n_{J^{(f,q)},i}=1$.
\item[(3)]
If $p=2 <q$ and $n-1$ is {\sl not} divisible by $q$  (i.e., $c>1$)  then  $n_{J^{(f,q)},i}=1$ or $2$.
In addition, if either $k$ is odd or $c<q/2$ then $n_{J^{(f,q)},i}=1$.
\end{enumerate}
\item[(ii)]
Suppose that $q$ divides $n$. Then $c=0$ and
$$n-1=(k-1)q+(q-1).$$
Using  \cite[Remark 4.3 on p. 352]{ZarhinM}, and (i), we obtain the  following results similar to (i),
replacing $n$ by $n-1$, $n-1$ by $n-2$, $k$ by $k-1$ and $c$ by $q-1$ respectively.
\begin{enumerate}

\item
If $p$ is odd then $(n-2)$ is {\sl not} divisible by $q$ and  $n_{J^{(f,q)},i}=1$.
\item
If $p=2 <q$ then $n-2$ is {\sl not} divisible by $q$  and  $n_{J^{(f,q)},i}=1$ or $2$.
In addition, if  $k-1$ is odd (i.e., $k$ is even) then $n_{J^{(f,q)},i}=1$.
\end{enumerate}

\item[(iii)]
If $n \ge 5$,  $p$ divides $n$ but $q$ does {\sl not} divide $n$ then  $n_{J^{(f,q)},i}=1$ \cite[Prop. 2.2 and Remark 2.3]{Xue}.
\end{itemize}

\begin{rem}
The case of $n=3$ is discussed in \cite{ZarhinPisa,Xue2}; see also
\cite{Ribet3}.
\end{rem}

\begin{thm}
\label{centerJfq}
Suppose that $n \ge 4$ and $\fchar(K)=0$.  If $p\mid n$ then we assume additionally that $n \ge 5$.

If $\End^0(J^{(f,q)},i)$ coincides with $i(\Q(\zeta_q))=\Q[\delta_q]$
then
$$\End^0(J^{(f,q)})=\Q[\delta_q]\cong \Q(\zeta_q), \  \End(J^{(f,q)})=\Z[\delta_q]\cong \Z[\zeta_q].$$
\end{thm}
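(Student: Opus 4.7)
The plan is to deduce from the hypothesis $\End^0(X,i)=i(E)$ (writing $X:=J^{(f,q)}$) that the center $C_X$ of $\End^0(X)$ coincides with $i(E)$. Once this is established, the centralizer of $i(E)$ in $\End^0(X)$ is the whole algebra, which together with the hypothesis gives $\End^0(X)=i(E)$; the integral statement $\End(X)=\Z[\delta_q]$ is then automatic, since $\End(X)$ is an order in $\Q(\zeta_q)$ containing the maximal order $\Z[\delta_q]\cong\Z[\zeta_q]$.

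I would begin by recording the standard structural consequences of the hypothesis. The center of $\End^0(X)$ lies in the centralizer of any subalgebra, so $C_X\subseteq\End^0(X,i)=i(E)$, hence $C_X$ is a field. By Section~\ref{split0}, $\End^0(X)$ is then a simple $\Q$-algebra, $X$ is $K_a$-isogenous to a power $Z^m$ of an absolutely simple abelian variety $Z$ with $\End^0(Z)=D$ a central division algebra of index $d$ over $C_Z=C_X$, and $i(E)$ is a maximal subfield of $\End^0(X)\cong M_m(D)$ with $[i(E):C_X]=md$. The goal is to force $md=1$, equivalently $C_X=i(E)$.

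The main tool is the action of $i(E)$ on $\Lie(X)$, used in the spirit of Section~\ref{tangentS}. Since $K$ contains $\zeta_q$, every embedding $\tau\in\Sigma_E$ lands in $K$. Decomposing $\End^0(X)\otimes_\Q K_a\cong\prod_{\sigma}M_{md}(K_a)$ over the embeddings $\sigma\colon C_X\hookrightarrow K_a$, one writes $\Lie(X)=\bigoplus_{\sigma}V_{\sigma}^{r_{\sigma}}$ with $V_{\sigma}\cong K_a^{md}$ the simple $M_{md}(K_a)$-module at $\sigma$. Because $i(E)$ is a maximal subfield, the image of $i(E)\otimes_{C_X,\sigma}K_a\hookrightarrow M_{md}(K_a)$ is a maximal commutative subalgebra acting diagonally on $V_{\sigma}$, contributing one eigenline for each of the $md$ embeddings $\tau$ of $i(E)$ extending $\sigma$. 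Hence $n_{\tau}(X,i)=r_{\tau|_{C_X}}$, so the multiplicity function $\tau\mapsto n_{\tau}(X,i)$ factors through $\tau\mapsto\tau|_{C_X}$. Equivalently, if $C_X\subsetneq i(E)$, then the nontrivial subgroup $H=\Gal(i(E)/C_X)\subseteq(\Z/q\Z)^*$ preserves this multiplicity function.

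The principal step, and the main technical obstacle I anticipate, is to derive a contradiction by inspecting the explicit values of $n_{\tau_a}(J^{(f,q)},i)$ listed at the end of Section~\ref{superE}, which are built from floor functions of $an/q$. Under the numerical hypotheses $n\ge 4$ (and $n\ge 5$ when $p\mid n$), a case analysis shows that the partition of $(\Z/q\Z)^*$ induced by these multiplicities is not stabilized by any nontrivial subgroup, forcing $H=\{1\}$ and therefore $C_X=i(E)$. The delicate subcase is the family $n=kq+1$ with $k>1$ (where $n_{X,i}=k$ and many of the $n_{\tau_a}$ share a common factor $k$), which requires a more careful examination of the distribution of residues $an\pmod q$ to exclude a common subgroup symmetry; all other cases should follow more directly from the monotonicity and spacing of the floor-function values.
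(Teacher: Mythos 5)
Your structural reduction is correct, and it is in fact the skeleton of the arguments in the sources to which the paper delegates this theorem: the paper itself proves Theorem \ref{centerJfq} purely by reference to \cite[Th.~4.16 and Rem.~4.3]{ZarhinM} and \cite[Cor.~4.4]{Xue}. In particular, $C_X\subseteq\End^0(X,i)=i(E)$ does force $C_X$ to be a field and $\End^0(X)$ to be simple; the double centralizer theorem (Theorems \ref{ssCenter} and \ref{herst}) makes $i(E)$ a self-centralizing maximal subfield with $[i(E):C_X]=md$ in your notation; and the decomposition of $\Lie(X)$ over $\End^0(X)\otimes_{\Q}K_a$ correctly yields $n_{\tau}(X,i)=r_{\tau|_{C_X}}$, so that the multiplicity function must be constant on the orbits of $H=\Gal(i(E)/C_X)$. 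Once $C_X=i(E)$ is known, $\End^0(X)=i(E)$ and the integral statement follow exactly as you say.

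The gap is that the entire arithmetic content of the theorem --- the assertion that $a\mapsto n_{\tau_a}$ is invariant under no nontrivial subgroup of $(\Z/q\Z)^{*}$ --- is announced (``a case analysis shows'') but not carried out, and the data you propose to inspect is not in the paper: Section \ref{superE} records only the gcd $n_{J^{(f,q)},i}$, not the individual $n_{\tau_a}$, so you must first establish the formulas themselves. Moreover your triage of the cases is inverted. For $p\nmid n$ one has, up to the standard relabelling $a\mapsto\pm a$, $n_{\tau_a}=\lfloor \bar a n/q\rfloor=k\bar a+\lfloor \bar a c/q\rfloor$ with $n=kq+c$, $0\le c<q$. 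If $k\ge 1$ then $n_{\tau_h}\ge k\bar h\ge 2k>k=n_{\tau_1}$ for every $h$ with $\bar h\ge 2$, so no nontrivial subgroup can preserve the function; in particular $n=kq+1$, where $n_{\tau_a}=k\bar a$ is injective in $\bar a$, is the \emph{easiest} case, not the delicate one (the common factor $k$ is irrelevant here --- it matters only for $n_{X,i}=1$ in Theorem \ref{mainElambda}, which is not a hypothesis of the present statement). The genuinely delicate range is $4\le n<q$, i.e.\ $k=0$: there every element of $H$ must have representative in $[1,q/n)$, and one must rule out small subgroups concentrated near $1$ (for instance subgroups generated by elements of small multiplicative order, such as $3\bmod 121$ of order $5$); this is where the hypothesis $n\ge 4$ actually enters. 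The cases $q\mid n$ and $p\mid n$, $q\nmid n$ further require the modified multiplicity formulas of \cite[Rem.~4.3]{ZarhinM} and \cite{Xue}. Until the non-invariance is verified in all three cases, the proof is incomplete.
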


\begin{proof}
\begin{itemize}
\item [(i)]
Suppose that $p$ does {\sl not} divide $n$. Then the result is  proven in  \cite[Theorem 4.16]{ZarhinM}.
\item [(ii)]
Suppose that $q\mid n$. This case follows from (i), thanks to Remark 4.3 of \cite{ZarhinM}.
\item [(iii)]
Suppose that $p\mid n$ but $q$ does {\sl not} divide $n$. Then the result is  proven in \cite[Cor. 4.4]{Xue}
\end{itemize}
\end{proof}

\begin{thm}
\label{centerJfq2}
Suppose that $n \ge 4$
 and $(n,p)$ is
 not $(4,2)$. Assume also that  there is a
a subgroup
$$\mathcal{G}\subset \Gal(f)\subset \Perm(\RR_f)$$
such that one of the following three conditions holds.

\begin{itemize}
\item[(i)]
The prime $p$ does not divide $n$, $\mathcal{G}$
 is doubly transitive and does not contain a subgroup, whose index divides $(n-1)$ except  $\mathcal{G}$ itself.
\item[(ii)]
The prime  power $q$  divides $n$,
 $\mathcal{G}$  does not contain a proper subgroup, whose index divides $(n-2)$.
In addition, either  $\mathcal{G}$  is $3$-transitive or
$$\End_{\mathcal{G}}((\F_p^{\RR_f})^{00})=\F_p.$$
\item[(iii)]
 The prime $p$  divides $n$ but $q$ does not divide $n$.  The group  $\mathcal{G}$ is transitive
and  does not contain a proper proper subgroup, whose index divides $(n-1)$. In addition, assume that
(at least) one of the following two conditions holds.
\begin{itemize}
\item[(A3)]
The group  $\mathcal{G}$ is transitive and the
 $\mathcal{G}$-module $(\F_p^{\RR_f})^{00}$ is absolutely simple.
\item[(B3)]
The centralizer
$\End_{\mathcal{G}}\left((\F_p^{\RR_f})_0\right)=\F_p$.
\end{itemize}
\end{itemize}

Then
$$\tilde{G}_{\lambda,J^{(f,q)},K}\cong \Gal(f), \  \End_{\tilde{G}_{\lambda,J^{(f,q)},K}}(J^{(f,q)}_{\lambda})=\F_p,$$
 $\End^0(J^{(f,q)})$ is a simple $\Q$-algebra, whose center is a subfield of   $\Q[\delta_q]$, and the centralizer $\End^0(J^{(f,q)},i)$  of $\Q[\delta_q]$  in $\End^0(J^{(f,q)})$ is a central simple $\Q[\delta_q]$-algebra.
\end{thm}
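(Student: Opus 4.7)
The plan is to invoke Lemma \ref{Jlambda} to identify the Galois image, compute the centralizer using the permutation-module results of Section \ref{permutation}, and then apply Theorem \ref{endoE} after restricting to a suitable finite extension of $K$ where the Galois image is $\mathcal{G}$.

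First I would note that the hypothesis $n \ge 4$ together with $(n,p)\ne(4,2)$ rules out both exceptional pairs $(3,3)$ and $(4,2)$, so Lemma \ref{Jlambda}(A) applies and gives $\tilde{G}_{\lambda,J^{(f,q)},K}\cong \Gal(f)$. Next I would verify that $\End_{\mathcal{G}}(J^{(f,q)}_{\lambda})=\F_p$ in each of the three cases, using the explicit description of $J^{(f,q)}_\lambda$ recalled in Section \ref{superE}: in case (i), $J^{(f,q)}_\lambda\cong (\F_p^{\RR_f})^0$ and double transitivity of $\mathcal{G}$ gives the conclusion via Corollary \ref{doubleNT}; in case (ii), $J^{(f,q)}_\lambda\cong (\F_p^{\RR_f})^{00}$ and we invoke either Lemma \ref{double}(ii) (under $3$-transitivity of $\mathcal{G}$) or the hypothesis directly; in case (iii), $J^{(f,q)}_\lambda\cong (\F_p^{\RR_f})_0$ and we use either Theorem \ref{xueCase} (under (A3)) or the hypothesis (B3). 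Since $\mathcal{G}\subset \Gal(f)$, we deduce $\End_{\Gal(f)}(J^{(f,q)}_\lambda)\subset \End_{\mathcal{G}}(J^{(f,q)}_\lambda)=\F_p$, and the reverse inclusion is trivial. This establishes the first two claims of the theorem.

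The main obstacle is that the non-existence of a proper subgroup of small index is hypothesized for $\mathcal{G}$, whereas Theorem \ref{endoE} wants this property for $\tilde{G}_{\lambda,J^{(f,q)},K}=\Gal(f)$, and in general such a property does not propagate from a subgroup to an overgroup. My remedy is to pass to the intermediate field $K':=K(\RR_f)^{\mathcal{G}}\subset K(\RR_f)$, whose Galois group $\Gal(K(\RR_f)/K')$ equals $\mathcal{G}$. Since $K'\supset K$ still contains a primitive $q$th root of unity and satisfies $\fchar(K')\ne p$, the entire setup of Section \ref{superE} applies over $K'$; in particular Lemma \ref{Jlambda}(A) now gives $\tilde{G}_{\lambda,J^{(f,q)},K'}\cong \Gal(f/K')=\mathcal{G}$, and the centralizer computation above shows $\End_{\tilde{G}_{\lambda,J^{(f,q)},K'}}(J^{(f,q)}_\lambda)=\F_p=k(\lambda)$.

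Finally, with the index hypothesis on $\mathcal{G}$ now a hypothesis on $\tilde{G}_{\lambda,J^{(f,q)},K'}$, and with $d_{J^{(f,q)},E}$ equal to $n-1$ in cases (i), (iii) and to $n-2$ in case (ii) (as recalled in Section \ref{superE}), I would apply Theorem \ref{endoE} to $X=J^{(f,q)}$ over the base field $K'$. Its three conclusions — simplicity of $\End^0(J^{(f,q)})$ as a $\Q$-algebra, the inclusion $C_{J^{(f,q)}}\subset i(E)=\Q[\delta_q]$, and central simplicity of $\End^0(J^{(f,q)},i)$ over $\Q[\delta_q]$ — are intrinsic to the abelian variety $J^{(f,q)}$ over $K_a$ and therefore yield precisely the remaining assertions of Theorem \ref{centerJfq2} over the original field $K$.
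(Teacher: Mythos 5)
Your proposal is correct and follows essentially the same route as the paper: the paper's proof likewise replaces $K$ by $K(\RR_f)^{\mathcal{G}}$ so that $\Gal(f)=\mathcal{G}$, invokes Lemma \ref{Jlambda} to get $\End_{\tilde{G}_{\lambda,J^{(f,q)},K}}(J^{(f,q)}_{\lambda})=\F_p$, and then applies Theorem \ref{endoE}. You merely spell out details the paper leaves implicit (the case-by-case centralizer computation, the inclusion $\End_{\Gal(f)}\subset\End_{\mathcal{G}}$ giving the statement over the original $K$, and the fact that the conclusions are intrinsic to $J^{(f,q)}$ over $K_a$).
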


\begin{rem}
 By Theorem \ref{xueCase}, the condition (A3) of Theorem \ref{centerJfq2} implies the condition (B3).
\end{rem}

\begin{proof}[Proof of Theorem  \ref{centerJfq2}]
Replacing  $K$ by its overfield  $K(\RR_f)^{\mathcal{G}}$, we may and will assume that $\Gal(f)= \mathcal{G}$.
it follows from Lemma \ref{Jlambda}   that
$$\End_{\tilde{G}_{\lambda,J^{(f,q)},K}}(J^{(f,q)}_{\lambda})=\F_p.$$
Now the desired result follows from
 Theorems  \ref{endoE}.
\end{proof}

\begin{rem}
\label{hyper}
Suppose that $q=2$, i.e.
$$\Z[\zeta_q]=\Z, \Q[\zeta_q]=\Q, \Q[\delta_q]=\Q.$$
In this case $C_{f,2}$ is a hyperelliptic curve of genus
$[(n-1)/2]$, and
$$J(C_{f,2})=J^{(f,2)},  \quad \left[\frac{n-1}{2}\right]=\dim(J(C_{f,2}))=\dim\left(J^{(f,2)}\right).$$
 Applying Theorem \ref{endoZ2}
(instead of  Theorems  \ref{endoE}), we can do slightly better. Namely,  we obtain that
$\End^0(J(C_{f,2}))$ is a central simple $\Q$-algebra
if there is a subgroup $\mathcal{G}$ of $\Gal(f)$ that enjoys the following properties.

\begin{itemize}
\item
 $\mathcal{G}$
contains neither a normal subgroup of index $2$ nor a proper subgroup
of index 
dividing
$[(n-1)/2]$.
\item
One of the following two conditions holds.

\begin{enumerate}
\item
$n$ is odd and  $\mathcal{G}$ is $2$-transitive

\item
$n$ is even
and  either $\mathcal{G}$ is $3$-transitive or
$$\End_{\mathcal{G}}((\F_p^{\RR_f})^{00})=\F_p.$$
\end{enumerate}
\end{itemize}

It follows from Albert's classification  \cite[Sect. 21]{MumfordAV} that the central simple $\Q$-algebra $\End^0(J(C_{f,2}))$ is isomorphic either to a matrix algebra over $\Q$ or to  a matrix algebra over a quaternion $\Q$-algebra.
See \cite{ZarhinMRL,ZarhinTexel,ZarhinMRL2,ZarhinBSMF,ZarhinMMJ,Elkin,ElkinZarhin,ElkinZarhin2,ZarhinP,ZarhinL,ZarhinPAMS,ZarhinT} for other results about endomorphism algebras of hyperelliptic jacobians.

\end{rem}

\begin{thm}
\label{mainJfq}
Let us assume that
$$\fchar(K)=0,  n \ge 4, q>2.$$
 If $p\mid n$ then we assume additionally that $n \ge 5$.

Suppose that
there is 
a subgroup
$$\mathcal{G}\subset \Gal(f)\subset \Perm(\RR_f)$$
such that (at least) one of the following three conditions holds.

\begin{itemize}
\item[(i)]
The prime $p$ does not divide $n$, $\mathcal{G}$
 is doubly transitive and does not contain a proper normal  subgroup, whose index divides $(n-1)$.
 Assume additionally that
$$n=kq+c, \  \ k,c \in \Z_{+}, \ 0  \le c<q.$$
where integers $p,k$ and $c$ enjoy (at least)  one of the following three properties.

 \begin{itemize}
 \item[(A1)]
 $n=q+1$, i.e., $k=1,c=1$.
 \item[(B1)]
 $p$ is odd and $c>1$ (i.e., $q$ does not divide $n-1$).
 \item[(C1)]
 $p=2<q,  c>1$ and either $k$ is odd or $c<q/2$.
 \end{itemize}

\item[(ii)]
The prime  power $q$  divides $n$,
 $\mathcal{G}$  does not contain a proper normal subgroup, whose index divides $(n-2)$.
 We also assume that
  $p$ and $k$ enjoy (at least) one of the following three properties.

 \begin{itemize}
 \item[(A2)]
 $p$ is odd.
 \item[(B2)]
 $p=2<q$  and $k$ is even.
 \item[(C2)]
 Either  $\mathcal{G}$  is $3$-transitive or
$$\End_{\mathcal{G}}\left((\F_p^{\RR_f})^{00}\right)=\F_p.$$
\end{itemize}
\item[(iii)]
The prime $p$  divides $n$ but $q$ does not divide $n$.  The group  $\mathcal{G}$
  does not contain a 
  proper
   normal  subgroup, whose index divides $(n-1)$.

In addition, assume that (at least) one of the following two conditions holds.
\begin{itemize}
\item[(A3)]
The  group $\mathcal{G}$ is transitive and the
 $\mathcal{G}$-module $(\F_p^{\RR_f})^{00}$ is absolutely simple.
 \item[(B3)]
 The centralizer
 $\End_{\mathcal{G}}\left((\F_p^{\RR_f})_0\right)=\F_p$.
\end{itemize}
\end{itemize}
Then
$$\End^0(J^{(f,q)})=\Q[\delta_q]\cong \Q(\zeta_q), \  \End(J^{(f,q)})=\Z[\delta_q]\cong \Z[\zeta_q].$$
\end{thm}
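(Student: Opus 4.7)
The plan is to reduce the claim to Theorem \ref{mainElambda} combined with Theorem \ref{centerJfq}. The former takes Galois-theoretic and tangent-space input for the pair $(J^{(f,q)}, i)$ and concludes $\End^0(J^{(f,q)}, i) = i(E)$; the latter then upgrades this equality to the desired $\End(J^{(f,q)}) = \Z[\delta_q]$. Since $K$ contains $\zeta_q$, every embedding $\tau: E = \Q(\zeta_q) \hookrightarrow K_a$ lands inside $K$, so the hypothesis $\tau(E) \subset K$ required by Section \ref{tangentS} holds. By replacing $K$ with the subfield $K(\RR_f)^{\mathcal{G}}$ of $K^{\sep}$ (which still contains $\zeta_q$), I may assume $\mathcal{G} = \Gal(f)$; since our standing assumption $n \ge 4$ is upgraded to $n \ge 5$ whenever $p \mid n$, the excluded pairs $(n,p) = (3,3), (4,2)$ do not occur. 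Thus Lemma \ref{Jlambda}(A) yields $\tilde G_{\lambda, J^{(f,q)}, K} \cong \Gal(f)$, and the hypothesis that $\mathcal{G}$ has no proper normal subgroup of index dividing $d_{J^{(f,q)}, E}$ (equal to $n-1$ in cases (i) and (iii), and to $n-2$ in case (ii)) transports directly to the group-theoretic hypothesis of Theorem \ref{mainElambda}.

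The remaining two hypotheses must be verified case by case: the centralizer identity $\End_{\tilde G_{\lambda, J^{(f,q)}, K}}(J^{(f,q)}_\lambda) = \F_p$ and the tangent-space identity $n_{J^{(f,q)}, i} = 1$. For the centralizer, Lemma \ref{Jlambda} supplies what is needed: in case (i), double transitivity of $\mathcal{G}$ gives it via part (B); in case (ii), alternative (C2) provides it via part (C); and in case (iii), condition (A3) (through Theorem \ref{xueCase}) or (B3) provides it via part (D). For the tangent-space invariant, the explicit formulas for $n_{J^{(f,q)}, i}$ tabulated earlier in this section evaluate to $1$ precisely under alternative (A1), (B1), or (C1) in case (i); under (A2) or (B2) in case (ii); and unconditionally in case (iii) for $n \ge 5$ by \cite{Xue}.

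Once the three hypotheses are in hand, Theorem \ref{mainElambda} delivers $\End^0(J^{(f,q)}, i) = i(E) = \Q[\delta_q]$, and then Theorem \ref{centerJfq} yields the stated equalities $\End^0(J^{(f,q)}) = \Q[\delta_q] \cong \Q(\zeta_q)$ and $\End(J^{(f,q)}) = \Z[\delta_q] \cong \Z[\zeta_q]$. The main obstacle is the interlock among the alternatives in case (ii): roughly, (A2) and (B2) are what one needs to force the tangent-space identity, whereas (C2) is what one needs to force the centralizer identity, so making any single listed alternative sufficient may require an auxiliary route through Theorem \ref{centerJfq2}, which supplies central simplicity of $\End^0(J^{(f,q)}, i)$ over $\Q[\delta_q]$ and can then be combined with either a tangent-space dimension bound or Corollary \ref{KnC} to force the $\Q[\delta_q]$-dimension to collapse to $1$. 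The verifications in cases (i) and (iii) are more uniform but still demand the careful sub-alternative bookkeeping indicated above.
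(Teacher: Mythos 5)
Your proposal is correct and follows the paper's own proof almost exactly: the paper likewise reads off $n_{J^{(f,q)},i}=1$ from the tabulated formulas, obtains the centralizer identity from Theorem \ref{centerJfq2} (whose proof is precisely your direct appeal to Lemma \ref{Jlambda}), and concludes via Theorems \ref{mainElambda} and \ref{centerJfq}. The ``interlock'' you flag in case (ii) is a genuine feature of the statement rather than a defect of your argument relative to the paper's: the published proof also tacitly draws the centralizer identity from a (C2)-type hypothesis and the tangent-space identity from (A2)/(B2), so your treatment reproduces the paper's argument faithfully (and your direct use of Lemma \ref{Jlambda} even sidesteps the mismatch between the ``no proper subgroup'' hypothesis of Theorem \ref{centerJfq2} and the ``no proper normal subgroup'' hypothesis here).
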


\begin{proof}
Clearly,  $(n,p)$ is neither $(3,3)$ nor $(4,2)$.
Notice that our conditions on $n$ and $q$ imply that $n_{J^{(f,q)},E}=1$.  Second, Theorem \ref{centerJfq2} implies that
$$\tilde{G}_{\lambda,J^{(f,q)},K}\cong \Gal(f),  \ \End_{\tilde{G}_{\lambda,J^{(f,q)},K}}(J^{f,q}_{\lambda})=\F_p.$$
Now
 Theorem  \ref{mainElambda} implies that  the centralizer $\End^0(J^{(f,q)},i)$ coincides with $\Q[\delta_q]=i(\Q(\zeta_q))$.
 Now the desired result follows from Theorem \ref{centerJfq}.
\end{proof}

\begin{rem}
Suppose that $\fchar(K)=0$, $n \ge 5$ and $\Gal(f)$ coincides either with the full symmetric group $\Perm(\RR_f)\cong \ST_n$ or the alternating group $\Alt(\RR_f)\cong \mathbf{A}_n$.
Then
$$\End^0(J^{(f,q)})=\Q[\delta_q]\cong \Q(\zeta_q), \  \End(J^{(f,q)})=\Z[\delta_q]\cong \Z[\zeta_q]$$
without any additional conditions on $n$ and $q$. The case when either $p$ does {\sl not} divide $n$ or $q\mid n$ was done in \cite{ZarhinM}, the case when $p\mid n$ but $q$ does {\sl not} divide $n$ was done in \cite{Xue}.
The proofs in \cite{ZarhinM} are based on the notion of a {\sl very simple} representation that was introduced in \cite{ZarhinTexel}, see also \cite{ZarhinT}.
\end{rem}

\begin{rem}
Theorem \ref{mainJfq} is a generalization of (\cite[Th. 5.2]{ZarhinMZ2}
\footnote{In Th. 5.2 of \cite{ZarhinMZ2} the assertion  (ii)(a) is actually not proven and should be ignored.}
, \cite[Th. 5.2]{ZarhinMZ2arxiv}).
\end{rem}

\section{$\delta_q$-invariant  divisors on superelliptic curves}
\label{heartqn}
The aim of this section is to construct  an isomorphism between the Galois modules $J^{(f,q)}_{\lambda}$ and $(\F_p^{\RR_f})^{00}$ 
when $q$ divides $n$. (The existence of such an isomorphism was stated and used in Section \ref{superE}.)

Suppose that $n=\deg(f)$ is divisible by $q$, i.e, there is a positive integer $m$ such that
$$n=mq.$$
We write $B=B_f$ for the set
$$B=\{(\alpha,0)\mid \alpha\in \RR_f\}\subset C_{f,q}(K_a).$$
The set $B$ consists of $\delta_q$-invariant points of $C_{f,q}(K_a)$.  Clearly,  $C_{f,q}(K_a)$ contains an affine curve
$$(C_{f,q})_0(K_a)=\{(a,b) \in K_a^2\mid f(a,b)=0\}.$$
The complement $C_{f,q}(K_a)\setminus (C_{f,q})_0(K_a)$ is a finite {\sl nonempty set}; we call its elements {\sl infinite points} of $C_{f,q}$.
The rational function $x\in K_a(C_{f,q})$ defines a finite cover $\pi: C_{f,q} \to \mathbb{P}^1$ of degree $q$. The set of branch points contains $B$ and sits in
 the (disjoint) union of $B$ and the (finite) set of infinite points of $C_{f,q}$; $\pi$ sends the latter set to the {\sl infinite  point} $\infty$ of $\mathbb{P}^1(K_a)$.
Clearly, $y$ is a {\sl local parameter} at every $P \in B$ and $\mathrm{ord}_P(x-x(P))=q$. If $\tilde{\infty}$ is any infinite point of $C$ then both
$\ord_{\tilde{\infty}}(x)$ and $\ord_{\tilde{\infty}}(y)$ are {\sl negative integers} such that $n \cdot \ord_{\tilde{\infty}}(x)=q \cdot \ord_{\tilde{\infty}}(y)$, i.e., 
$$\ord_{\tilde{\infty}}(y)=m \cdot \ord_{\tilde{\infty}}(x).$$
It follows easily from the previous remark that if $\beta \in K_a$ then the rational function $(x-\beta)\in K_a(C_{f,q})$ has a  pole at $\tilde{\infty}$, whose order does {\sl not} depend on $\beta$, including  the cases $\beta=0$ and
$\beta=\alpha\in \RR_f$.

The main result of this section is  the following statement.

\begin{thm}
\label{qdividesn}
Suppose that $n=\deg(f)$ is divisible by $q=p^r$.

Then the $\Gal(K)$-modules $J^{(f,q)}_{\lambda}$ and $(\F_p^{\RR_f})^{00}$ are isomorphic.
\end{thm}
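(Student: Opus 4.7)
The plan is to reduce to the already-proven case (i), where $p \nmid \deg f$, by a birational substitution that sends one ramification point of the $x$-projection to infinity.

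First I would fix any root $\alpha_0 \in \RR_f$ and perform the substitution $u = 1/(x-\alpha_0)$, $v = y u^m$ (where $n = qm$). A short computation gives $f(\alpha_0 + 1/u) = u^{-n}\tilde f(u)$, where
\[
\tilde f(u) := \prod_{\alpha \in \RR_f \setminus \{\alpha_0\}}\bigl((\alpha_0-\alpha)u+1\bigr),
\]
so $v^q = \tilde f(u)$. This exhibits a $K(\alpha_0)$-isomorphism $C_{f,q} \xrightarrow{\sim} C_{\tilde f,q}$ of smooth projective models intertwining the two $\delta_q$-actions (both send $y \mapsto \zeta_q y$ and $v \mapsto \zeta_q v$). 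Here $\tilde f \in K(\alpha_0)[u]$ has degree $n-1$, no multiple roots, and roots $\{1/(\alpha - \alpha_0) : \alpha \in \RR_f \setminus \{\alpha_0\}\}$. Since $q \mid n$ forces $p \nmid n-1$, $\tilde f$ lies in case (i), so the already-proven result gives
\[
J^{(f,q)}[\lambda] \cong J^{(\tilde f,q)}[\lambda] \cong (\F_p^{\RR_{\tilde f}})^{0}
\]
as $\Gal(K(\alpha_0))$-modules. The canonical bijection $\alpha \leftrightarrow 1/(\alpha-\alpha_0)$ identifies $(\F_p^{\RR_{\tilde f}})^{0}$ with $(\F_p^{\RR_f \setminus \{\alpha_0\}})^{0}$, and the $\F_p$-linear map ``extend by zero at $\alpha_0$, then reduce modulo constants''
\[
(\F_p^{\RR_f \setminus \{\alpha_0\}})^{0} \hookrightarrow (\F_p^{\RR_f})^{0} \twoheadrightarrow (\F_p^{\RR_f})^{00}
\]
is injective (a function extended by zero at $\alpha_0$ whose image in the heart is zero must equal a scalar multiple of $1_{\RR_f}$; since it vanishes at $\alpha_0$, the scalar is zero) and surjective by the dimension count $n-2 = n-2$. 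Composing all identifications produces a $\Gal(K(\alpha_0))$-equivariant isomorphism $\Xi_{\alpha_0} : J^{(f,q)}[\lambda] \xrightarrow{\sim} (\F_p^{\RR_f})^{00}$.

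The main obstacle is to upgrade this equivariance from $\Gal(K(\alpha_0))$ to $\Gal(K)$, since $\alpha_0$ is not $K$-rational in general. The plan is to show that $\Xi_{\alpha_0}$ does not depend on the choice of $\alpha_0$. Unwinding the case-(i) isomorphism (whose inverse sends $\phi \in (\F_p^{\RR_{\tilde f}})^{0}$ to the class of $\sum_\beta \tilde\phi(\beta)(\tilde P_\beta - \tilde \infty)$, suitably projected into $J^{(\tilde f,q)}$), one arrives at the explicit formula
\[
\Xi_{\alpha_0}^{-1}(\bar\phi) \;=\; \sum_{\alpha \in \RR_f \setminus \{\alpha_0\}} \phi_{\alpha_0}(\alpha)\,\bigl[P_\alpha - P_{\alpha_0}\bigr]_{J^{(f,q)}},
\]
where $\phi_{\alpha_0}$ is the unique lift of $\bar\phi$ to $(\F_p^{\RR_f})^{0}$ vanishing at $\alpha_0$, and the divisor classes are viewed as $\delta_q$-invariant $p$-torsion classes projected onto $J^{(f,q)}$ (well-defined because $\Phi_q(1)=p$ forces $J^{(f,q)}[\lambda] \subset J^{(f,q)}[p]$). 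For another $\alpha_0' \in \RR_f$, writing $\phi_{\alpha_0'} = \phi_{\alpha_0} - \phi_{\alpha_0}(\alpha_0') \cdot 1_{\RR_f}$ and expanding both sums, the discrepancy between $\Xi_{\alpha_0}^{-1}(\bar\phi)$ and $\Xi_{\alpha_0'}^{-1}(\bar\phi)$ collapses to a scalar multiple of the class of $\sum_\alpha P_\alpha - n P_{\alpha_0}$, which vanishes in $J(C_{f,q})$ thanks to the principal divisor identity
\[
\sum_{\alpha \in \RR_f} P_\alpha - n P_{\alpha_0} \;=\; \div\!\bigl(y/(x-\alpha_0)^m\bigr),
\]
obtained from $\div(y) = \sum_\alpha P_\alpha - m\sum_j \infty_j$ and $\div(x-\alpha_0) = q P_{\alpha_0} - \sum_j \infty_j$. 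Hence $\Xi := \Xi_{\alpha_0}$ is genuinely independent of $\alpha_0$. For any $\sigma \in \Gal(K)$, the naturality of the entire construction under the Galois action yields $\sigma \Xi_{\alpha_0} \sigma^{-1} = \Xi_{\sigma(\alpha_0)} = \Xi$, so $\Xi$ is $\Gal(K)$-equivariant and the theorem follows.
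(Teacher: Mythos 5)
Your proposal is correct, but it takes a genuinely different route from the paper. The paper works entirely on $C_{f,q}$ itself: it defines a map $\Psi$ sending $\phi\in(\F_p^{B})^{0}$ to the class of $p^{r-1}\sum_{P}a_P(P)$ and proves directly that $\ker\Psi=\F_p\cdot 1_B$ via Lemma \ref{leap}, which in turn rests on an explicit description of all principal divisors supported on the branch locus $B$ (Lemma \ref{prinDIV} and Corollary \ref{principalC}, using the decomposition $K_a(C_{f,q})=\oplus_j y^j K_a(x)$); a dimension count then finishes the proof. You instead reduce to the already-known coprime case by the substitution $u=1/(x-\alpha_0)$, $v=yu^{m}$ --- the same trick the paper invokes (via Remark 4.3 of \cite{ZarhinM}) for the dimension formula in case (ii) and for Theorem \ref{centerJfq}, but deliberately avoids for this statement, precisely because the substitution is only defined over $K(\alpha_0)$ and so does not a priori respect the $\Gal(K)$-module structure. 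Your handling of that descent problem is the right one: unwinding the case-(i) isomorphism into the explicit divisor-class formula $\bar\phi\mapsto\sum_{\alpha}\phi_{\alpha_0}(\alpha)\bigl[P_\alpha-P_{\alpha_0}\bigr]$ and checking independence of $\alpha_0$ using $\sum_{\alpha}\phi_{\alpha_0}(\alpha)=0$ together with the identity $\sum_{\alpha}P_\alpha-nP_{\alpha_0}=\mathrm{div}\bigl(y/(x-\alpha_0)^{m}\bigr)$ is exactly what is needed, and once the formula is base-point independent the $\Gal(K)$-equivariance is immediate. In fact your explicit formula is the inverse of the paper's $\Psi$ composed with the lift vanishing at $\alpha_0$, so the two proofs construct the same map; the difference is where the injectivity/well-definedness comes from --- the paper proves it from scratch (Lemma \ref{leap}), you import it from case (i). Two caveats to record: your argument genuinely needs the explicit divisor-theoretic form of the case-(i) isomorphism of \cite[Lemma 4.11]{ZarhinM} (an abstract, non-canonical isomorphism would not support the naturality claim $\sigma\Xi_{\alpha_0}\sigma^{-1}=\Xi_{\sigma(\alpha_0)}$), and the edge case $n=q=3$ produces $\deg\tilde f=2$, which falls outside the standing hypothesis $n\ge 3$ under which case (i) is stated, though the statement remains valid there.
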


In the course of the proof of Theorem \ref{qdividesn} we will use the following assertion that will be proven at the end of this section.

\begin{lem}
\label{leap}
Let $D=\sum_{P\in B}a_P(P)$ be a degree zero divisor with support in $B$. Then the linear equivalence class of $p^{r-1}D$ is zero if and only if there exists an integer $j$
such that all integers $a_P$'s are congruent to $j$ modulo $p$.
\end{lem}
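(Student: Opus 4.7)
My plan is to exploit the pointwise $\delta_q$-invariance of $B$ --- which forces any function $h$ with $\div(h) = p^{r-1}D$ to be a $\delta_q^*$-eigenvector --- and then to read off the structure of such $h$ from the Kummer decomposition of $K_a(C_{f,q})$ over $K_a(x)$. This converts a Jacobian-theoretic question into a computation with explicit divisors of $y$ and the functions $x-\alpha$.

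I would begin by recording two basic principal-divisor formulas on $C_{f,q}$. Since $y$ is a local parameter at each $P_\alpha = (\alpha,0) \in B$ and $\ord_{P_\alpha}(x-\alpha) = q$, and since the hypothesis $q \mid n$ forces $\pi\colon C_{f,q}\to\P^1$ to be unramified over $\infty$ with exactly $q$ fibre points $\tilde\infty$ each carrying $\ord_{\tilde\infty}(x) = -1$ and $\ord_{\tilde\infty}(y) = -m$, one obtains, with $E_\infty := \sum_{\tilde\infty}(\tilde\infty)$ the (effective, degree-$q$) sum of infinite points,
\begin{equation*}
\div(x-\alpha) \;=\; q(P_\alpha) - E_\infty, \qquad \div(y) \;=\; \sum_{\alpha \in \RR_f}(P_\alpha) - m E_\infty.
\end{equation*}

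For the ``if'' direction I would take $a_{P_\alpha} = j + pb_\alpha$ with $b_\alpha \in \Z$; the degree-zero constraint $\sum_\alpha a_{P_\alpha} = 0$ combined with $|B| = mq$ yields $\sum_\alpha b_\alpha = -jmp^{r-1}$, so that the function
\begin{equation*}
h \;:=\; y^{jp^{r-1}} \prod_{\alpha \in \RR_f}(x-\alpha)^{b_\alpha}
\end{equation*}
has divisor
\begin{equation*}
\div(h) \;=\; \sum_\alpha\bigl(jp^{r-1}+qb_\alpha\bigr)(P_\alpha) \;-\; \bigl(jmp^{r-1}+\textstyle\sum_\alpha b_\alpha\bigr)E_\infty \;=\; p^{r-1}D,
\end{equation*}
as desired.

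For the converse, suppose $p^{r-1}D = \div(h)$. Since $\delta_q^*D = D$, the rational function $\delta_q^*h/h$ has trivial divisor, hence equals a constant $\mu \in K_a^*$ with $\mu^q = 1$; write $\mu = \zeta^k$ for some $0 \le k < q$. The Kummer decomposition $K_a(C_{f,q}) = \bigoplus_{i=0}^{q-1} K_a(x)\cdot y^i$, on which $\delta_q^*$ acts on the $i$-th summand by multiplication by $\zeta^i$, then forces $h = g(x)\cdot y^k$ for a unique $g(x) \in K_a(x)^*$. Because $\div(h)$ is supported on $B$, the function $g$ can have zeros and poles only at $\RR_f \cup \{\infty\}$, so $g(x) = c\prod_\alpha (x-\alpha)^{e_\alpha}$. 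Equating $\div(g(x)y^k) = \sum_\alpha(k+qe_\alpha)(P_\alpha) - (km+\sum_\alpha e_\alpha)E_\infty$ with $p^{r-1}D$ gives $k + qe_\alpha = p^{r-1}a_{P_\alpha}$ for every $\alpha$; since $q=p^r$, this forces $p^{r-1}\mid k$, and writing $k = jp^{r-1}$ with $0\le j<p$ one obtains $a_{P_\alpha}\equiv j\pmod p$ uniformly in $\alpha$. The main obstacle will be the clean eigenspace analysis at the outset; once the Kummer structure of $K_a(C_{f,q})/K_a(x)$ is invoked, the remainder is a routine ledger of zero-and-pole orders.
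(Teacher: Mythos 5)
Your proof is correct and takes essentially the same route as the paper: the paper's own argument (via Lemma \ref{prinDIV} and Corollary \ref{principalC}) rests on exactly the same ingredients --- the pointwise $\delta_q$-invariance of $B$ forcing $h$ to be a $\delta_q^{*}$-eigenfunction, the Kummer decomposition $K_a(C_{f,q})=\oplus_{i=0}^{q-1}y^i K_a(x)$ giving $h=g(x)y^k$, and the order bookkeeping at $B$ and at the infinite points. You have merely inlined the intermediate characterization of principal divisors supported in $B$ and applied it directly to $p^{r-1}D$.
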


\begin{proof}[Proof of Theorem \ref{qdividesn} (modulo Lemma \ref{leap})]
The map $P \to x(P)$ establishes a Galois-equivariant bijection between $B$ and $\RR_f$. So, it suffices to check that the Galois modules $J^{(f,q)}_{\lambda}$ and $(\F_p^{B})^{00}$ are isomorphic.
Notice that
$$J^{(f,q)}_{\lambda}=\{x \in J^{(f,q)}(K_a)\mid \delta_q(x)=x\}\subset J^{(f,q)}(K_a)=$$
$$ \mathcal{P}_{q/p}(\delta_q)((J(C_{f,q})(K_a))=\left(1+\delta_q+ \dots + \delta_q^{p^{r-1}-1}\right)(J(C_{f,q})(K_a)).$$
Since $B \subset C_{f,q}(K_a)$ consists of $\delta_q$-invariant points, the linear equivalence class of every degree zero divisor $D=\sum_{P\in B}a_P(P)$ is a $\delta_q$-invariant point of
$J(C_{f,q})(K_a)$. This implies that that the linear equivalence class of $p^{r-1}D=\sum_{P\in B}p^{r-1}a_P(P)$ lies in
$$\{x \in J^{(f,q)}(K_a)\mid \delta_q(x)=x\}=J^{(f,q)}_{\lambda}\subset J^{(f,q)}(K_a)\subset J(C_{f,q})(K_a).$$

Let us consider the following Galois-equivariant  homomorphism of $\F_p$-vector spaces
$$\Psi: (\F_p^{B})^{0} \to J^{(f,q)}_{\lambda}.$$
Let $\phi: B \to \F_p$ be a function with $\sum_{b\in B}\phi(b)=0$. We may ``lift'' $\phi$ to a map $P \to a_P\in \Z$ in such a may that
$$b_P \bmod p=\phi(P) \ \forall P\in B, \ \sum_{P\in P}a_P=0.$$
Then $D=\sum_P a_P(P)$ is a degree zero divisor on $C_{f,q}$ with support in $B$. We define $\Psi(\phi)\in J^{(f,q)}_{\lambda}$ as the linear equivalence class of $p^{r-1}D$.
First, notice that our map is well-defined. Indeed, if $P \mapsto a_P$ lifts  the zero function then all $a_P$ are divisible by $p$ and therefore  all the coefficients of $p^{r-1}D$ are divisible by $p\cdot p^{r-1}=q$.
It follows from  by Lemma \ref{leap} that the class of $p^{r-1}D$ is zero.
This proves that $\Psi$ is well-defined. Clearly, $\Phi$ is a group homomorphism and therefore is a $\F_p$-linear map. It follows from the same Lemma that
$\phi \in \ker(\Psi)$ if and only if there exists $j\in \Z$ such that all (the corresponding) $a_P$'s are congruent to $j$ modulo $p$. This means that
$$\phi(P)=j\bmod p \ \forall P\in B,$$
i.e., $\phi$ is a constant function. In other words, $\ker(\Psi)=\F_p\cdot 1_B$. Therefore $\Phi$ induces a Galois-equivariant embedding of $\F_p$-vector spaces
$$(\F_p^{B})^{00}=(\F_p^{B})^{0}/(\F_p\cdot 1_B) \hookrightarrow   J^{(f,q)}_{\lambda}.$$
This embedding is actually an isomorphism, since
$$\dim_{\F_p}((\F_p^{B})^{00})=n-2=\dim_{\F_p}(J^{(f,q)}_{\lambda}).$$
\end{proof}

It remains to prove Lemma \ref{leap}.
We will need the following  two assertions
that characterize {\sl principal divisors} with support in $B$.

\begin{lem}
\label{prinDIV}
Let
 $D=\sum_{P\in B}a_P(P)$ be a divisor on $C_{f,q}$
   with support in $B$. Then $D$ is principal
if and only if  there exist a divisor  $D_1=\sum_{P\in B}b_P(P)$ on $C_{f,q}$ with  support in $B$
and a nonnegative integer $j<q$ such that
 $m$ divides $\deg(D_1)=\sum_{P\in B}b_P$ and
$$D= q\sum_{B\in B}b_P(P) - \frac{\sum_{P\in B}b_P}{m}\left(\sum_{P\in B}(P)\right).$$
\end{lem}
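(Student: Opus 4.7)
The plan is to reduce the lemma to the classification of $\delta_{q}$-eigenvectors in $K_{a}(C_{f,q})^{\times}$ whose divisors are supported on $B$. To set up, I would let $D_{\infty}=\sum_{i=1}^{q}(\infty_{i})$ denote the sum of the $q$ points above $x=\infty$ (there are indeed exactly $q$ of them: in the coordinates $u=1/x$, $v=y/x^{m}$ the equation reads $v^{q}=\lambda$ with $\lambda$ the leading coefficient of $f$, which has $q$ simple roots), and use the local data recalled just before the lemma to record the basic principal divisors
$$\div(x-\alpha)=q(P_{\alpha})-D_{\infty}\ (\alpha\in\RR_{f}),\qquad \div(y)=\sum_{P\in B}(P)-mD_{\infty}.$$
Consequently, for any integers $e$ and $d_{\alpha}$ the function $g=y^{e}\prod_{\alpha\in\RR_{f}}(x-\alpha)^{d_{\alpha}}$ has divisor
$$\div(g)=\sum_{\alpha\in\RR_{f}}(e+qd_{\alpha})(P_{\alpha})-\bigl(em+\sum_{\alpha}d_{\alpha}\bigr)D_{\infty}.$$

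For the sufficiency direction ($\Leftarrow$), given $D_{1}=\sum b_{P}(P)$ with $m\mid\sum b_{P}$ I would set $j:=(\sum b_{P})/m$, $e:=-j$ and $d_{\alpha}:=b_{P_{\alpha}}$: the $D_{\infty}$-coefficient becomes $-jm+jm=0$, and the resulting divisor equals $\sum_{\alpha}(qb_{P_{\alpha}}-j)(P_{\alpha})=D$, so $D$ is principal. (If one wants $0\le j<q$, shift all $b_{P}$ by a common integer, which alters $j$ by a multiple of $q$.)

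The substance of the lemma is the converse. Suppose $D=\div(g)$. The key observation will be that $D$ is $\delta_{q}$-invariant because every point of $B$ is fixed by $\delta_{q}$; hence $\div(\delta_{q}(g)/g)=0$, so $\delta_{q}(g)/g$ is a nowhere-vanishing regular function on the complete curve $C_{f,q}$, i.e.\ a nonzero constant, and since $\delta_{q}^{q}=1$ it is of the form $\zeta^{-j}$ for some $j\in\{0,1,\dots,q-1\}$. Thus $g$ lies in the $\zeta^{-j}$-eigenspace of $\delta_{q}$ acting on $K_{a}(C_{f,q})=K_{a}(x)[y]/(y^{q}-f(x))$; since the extension $K_{a}(C_{f,q})/K_{a}(x)$ is Galois with group $\langle\delta_{q}\rangle$ and eigendecomposition $\bigoplus_{i=0}^{q-1}y^{i}K_{a}(x)$ (and $y^{q}=f(x)\in K_{a}(x)^{\times}$ allows one to shift the exponent into any chosen residue class mod $q$), I may write $g=y^{-j}G(x)$ with $G\in K_{a}(x)^{\times}$.

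To finish, I would factor $G(x)=c\prod_{\beta}(x-\beta)^{e_{\beta}}$ and use that $\div(g)$ has no support outside $B$. At any unramified affine point above a $\beta\notin\RR_{f}$ the function $x-\beta$ is a uniformizer and $y$ is a unit, so the local order of $g$ there is $e_{\beta}$, forcing $e_{\beta}=0$; hence $G(x)=c\prod_{\alpha\in\RR_{f}}(x-\alpha)^{d_{\alpha}}$. Vanishing of the $D_{\infty}$-coefficient in the displayed formula then reads $-jm+\sum_{\alpha}d_{\alpha}=0$, i.e.\ $\sum_{\alpha}d_{\alpha}=jm$. Setting $b_{P_{\alpha}}:=d_{\alpha}$ produces the desired $D_{1}$ with $\deg D_{1}=jm$ divisible by $m$ and $a_{P_{\alpha}}=-j+qd_{\alpha}=qb_{P_{\alpha}}-(\sum_{P}b_{P})/m$, exactly the formula of the lemma. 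I expect the only nontrivial step to be the $\delta_{q}$-eigenvector reduction; everything else is routine bookkeeping with the local orders already spelled out in the text.
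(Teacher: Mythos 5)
Your proof is correct and follows essentially the same route as the paper's: both arguments hinge on observing that a function with $\delta_q$-invariant divisor is a $\delta_q$-eigenvector, hence of the form $y^{\pm j}G(x)$ via the decomposition $K_a(C_{f,q})=\oplus_{i} y^i K_a(x)$, and then on matching orders at the points of $B$ and at infinity. Your explicit bookkeeping with $D_\infty=\sum_i(\infty_i)$ is a harmless variant of the paper's use of the relation $\ord_{\tilde\infty}(y)=m\cdot\ord_{\tilde\infty}(x)$.
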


\begin{cor}
\label{principalC}
Let $Q$ be a point of $B$. Then a divisor $D=\sum_{P\in B}a_P(P)$ with support in $B$ is principal if and only if there is
a degree zero divisor $D_0$ with support in $B$ and an integer $j$ such that
\begin{equation}
\label{divP}
D=q D_0 +j\left( \left(\sum_{P\in B}(P)\right)-n(Q)\right).
\end{equation}
In addition, all integers $a_P$'s are divisible by $p^{r-1}$ if and only if $j$ is  divisible by $p^{r-1}$.
\end{cor}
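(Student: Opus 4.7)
My plan is to derive the corollary directly from Lemma \ref{prinDIV} by a change of normal form that shifts the ``anchor'' from the sum $\sum_{P\in B}(P)$ to the combination $\sum_{P\in B}(P) - n(Q)$. The key observation is that these two divisors differ by $n(Q) = qm(Q)$, which is itself $q$ times the divisor $m(Q)$ of degree $m$. So the two characterizations should be related by absorbing a multiple of $(Q)$ into the ``$D_1$''-term of Lemma \ref{prinDIV}, with the accompanying adjustment of the coefficient of $\sum_{P\in B}(P)$.

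Concretely, given a principal $D$ with support in $B$, I would apply Lemma \ref{prinDIV} to obtain $D_1$ with $k := \deg(D_1)/m \in \Z$ and $D = qD_1 - k\sum_{P\in B}(P)$, then set $D_0 := D_1 - mk(Q)$ and $j := -k$. The divisor $D_0$ has degree zero and support in $B$ (since $Q\in B$), and the relation $n = mq$ makes the identity \eqref{divP} fall out by direct substitution. For the converse, given $(D_0, j)$ as in \eqref{divP}, I would set $D_1 := D_0 - jm(Q)$, observe that $\deg(D_1) = -jm$ is divisible by $m$, and check that $D$ takes precisely the shape required by Lemma \ref{prinDIV}, hence is principal.

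For the divisibility statement, writing $D_0 = \sum_P d_P(P)$ and reading off the coefficients of \eqref{divP} gives $a_P = qd_P + j$ for $P\ne Q$ and $a_Q = qd_Q + j - jn$. Since $q = p^r$ and $q\mid n$, reducing modulo $p^{r-1}$ yields $a_P\equiv j \pmod{p^{r-1}}$ for every $P\in B$, including $P=Q$. Therefore $p^{r-1}$ divides every $a_P$ if and only if it divides $j$. The substance of the argument sits in Lemma \ref{prinDIV} itself; the passage from that lemma to the corollary is essentially bookkeeping and poses no real obstacle.
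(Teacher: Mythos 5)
Your proposal is correct and follows essentially the same route as the paper: both directions are obtained from Lemma \ref{prinDIV} by the substitution $D_0=D_1-mk(Q)$, $j=-k$ (and its inverse), using $n=qm$, and the divisibility claim is read off coefficientwise modulo $p^{r-1}$ exactly as in the paper. The only cosmetic difference is that the paper verifies the ``if'' direction by exhibiting the explicit principal divisors $\mathrm{div}\bigl((x-x(Q))^m/y\bigr)$ and $\mathrm{div}\bigl((x-x(P))/(x-x(Q))\bigr)$, whereas you route that direction through Lemma \ref{prinDIV} as well; both are valid.
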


\begin{proof}[Proof of Lemma \ref{prinDIV}]
Suppose $D=\mathrm{div}(h)$ where $h \in K_a(C_{f,q})$ is a {\sl nonzero} rational function on $C_f$. Since $D$ is $\delta_q$-invariant,
$\delta_q^{*}h= h \delta_q$ coincides with $c\cdot h$ for some nonzrero $c\in K_a$. The $\delta_q$-invariance of the splitting
$$K_a(C_{f,q})=\oplus_{j=0}^{q-1} y^j \cdot K_a(x)$$ implies
that $h(x)=y^j\cdot u(x)$ for some nonzero rational function $u(x) \in K_a(x)$ and a nonnegative integer $j \le q-1$. It follows that all ``finite'' zeros and poles of $u(x)$ lie in $B$.
i.e.,  there exists an integer-valued function $P \mapsto b_P$ on $B$ such that $u(x)$ coincides up to multiplication by a nonzero constant to $\prod_{P \in B}(x-x(P))^{b_P}$.
Recall that the zero divisor of $y$ is $\sum_{P\in B}(P)$ while   the set of its poles coincides with the set of infinite points of $C_f$ and if $\tilde{\infty}$ is such a point then
 $$\ord_{\tilde{\infty}}(u)=(\sum_{P\in B}b_P)\ord_{\tilde{\infty}}(x)=\frac{\sum_{P\in B}b_P}{m} \cdot \ord_{\tilde{\infty}}(y).$$
 Since $h(x)=y^j u(x)$ has neither zeros nor poles at infinite points of $C_{f,q}$,
$$\frac{\sum_{P\in B}b_P}{m}+j=0.$$
On the other hand, for each $P\in B$,
$$a_P=\ord_P(h)=j+\ord_P(u)=j+q b_P.$$
This implies that
$$D=\sum_{P\in B}a_P(P)=q\sum_{P\in B}b_P(P)+j\sum_{P\in B}(P)= q\sum_{P\in B}b_P(P) - \frac{\sum_{P\in B}b_P}{m}(\sum_{P\in B}(P)).$$

Conversely, suppose that there is a divisor $\sum_{P\in B}b_P(P)$ on $C_f$ with support in $B$ such that
  $m$ divides $\left(\sum_{P\in B}b_P\right)$ and
$$D= q\sum_{P\in B}b_P(P) - \frac{\sum_{P\in B}b_P}{m}\left(\sum_{P\in B}(P)\right).$$
Clearly, $\deg(D)=0$. Let us put
$$j:=-\frac{\sum_{P\in B}b_P}{m}.$$
Let us consider the (nonzero)  rational function
$$h=y^j \prod_{P\in B}(x-x(P))^{b_P} \in K_a(C_f).$$
Clearly $h$ has neither zeros nor poles at infinite points of $C_f$, because
$$\ord_{\tilde{\infty}}(h)=j \ord_{\tilde{\infty}}(y)+(\sum_{P\in B} b_P)\ord_{\tilde{\infty}}(x)=(mj+\sum_{P\in B} b_P)\ord_{\tilde{\infty}}(x)=0\cdot \ord_{\tilde{\infty}}(x)=0.$$
This implies that the support of $\mathrm{div}(h)$ lies in $B$. For each $P\in B$
$$\ord_P(h)=j+q b_P=a_P.$$
This implies that $D=\mathrm{div}(h)$, i.e., $D$ is {\sl principal}.
\end{proof}

\begin{proof}[Proof of Corollary \ref{principalC}]
Clearly, $n(Q)-\sum_{P\in B}(P)$ is the divisor of the rational function $\left(x-x(Q)\right)^m/y$ and $q\left((P)-(Q)\right)$ is the divisor of the rational function $(x-x(P))/(x-x(Q))$.
This implies that a  divisor $D$ of the form \eqref{divP} is principal.

Conversely, suppose that a divisor $D=\sum_{P\in B}a_P(P)$ with support in $B$ is principal. Let $\sum_{P\in B}b_P(P)$ and $j$
be as in Lemma \ref{prinDIV} and its proof,  i.e.,
$$  j =- \frac{\sum_{P\in B}b_P}{m} \in\Z,  \ D=q\sum_{P\in B}b_P(P) +j\left(\sum_{P\in B}(P)\right).$$
Let us put
$$D_0=(\sum_{P\in B}b_P(P) )- (\sum_{P\in B}b_P)(Q)=(\sum_{P\in B}b_P(P) )+jm(Q).$$
Clearly, $D_0$ is a degree zero divisor with support in $B$ and
 $$D=q\sum_{P\in B}b_P(P) -q (\sum_{P\in B}b_P)(Q) +q (\sum_{P\in B}b_P)(Q)+j\left(\sum_{P\in B}(P)\right)=$$
 $$q D_0-qjm(Q)  +j\left(\sum_{P\in B}(P)\right)=q D_0-j n (Q)+j\left(\sum_{P\in B}(P)\right)=q D_0 +j\left( \left(\sum_{P\in B}(P)\right)-n(Q)\right).$$

 In order to prove the second assertion of Corollary,  notice that both  $q=p^r$  and $n=qm=p^r m$ are divisible by $p^{r-1}$ and therefore all the coefficients of $D$ are divisible by $p^{r-1}$
  if and only if all the coefficients of $j \left(\sum_{P\in B}(P)\right)$ are  divisible  by $p^{r-1}$ as well.
All the coefficients of $j \left(\sum_{P\in B}(P)\right)$ are equal to $j$ and therefore
  are divisible by $p^{r-1}$ if and only if $j$ is  divisible by $p^{r-1}$.
\end{proof}

\begin{proof}[Proof of Lemma \ref{leap}]
Let us fix a point $Q\in B$.

Suppose that the  class of $p^{r-1}D$ is zero.
By Corollary \ref{principalC} (applied to $p^{r-1}D$), there
exist a a degree zero divisor $D_0=\sum{P\in B} b_P(P)$ and an integer $j_0=j_0(Q)\in\Z$ such that
$$p^{r-1}D=p^r D_0 +p^{r-1}j_0\left( \left(\sum_{P\in B}(P)\right)-n(Q)\right).$$
This means that
$$p^{r-1}a_Q=p^r b_Q+p^{r-1}j_0(Q) \cdot (1-n), \  p^{r-1}a_P=p^r b_P+p^{r-1} j_0(Q) \ \forall P \in B\setminus \{Q\}.$$
The first equality implies that $(1-n)j_0(Q)$ is congruent to $a_Q$ modulo $p$, which means that $j_0(Q)$ is congruent to $a_Q$ modulo $p$
(since $p\mid n$). The second equality implies that $a_P$ is congruent to $j_0(Q)$ modulo $P$, i.e., $a_P$ is congruent to $a_Q$ for all $P \in B \setminus \{Q\}$.
Since $a_Q$ is obviously congruent to itself modulo $p$, we obtain that $a_P$ is congruent to $a_Q$ modulo $p$ for each $P,Q \in B$. Now we may put $j=a_Q$.

Conversely, suppose that $D=\sum_{P\in B}a_P(P)$ is a degree zero divisor with support in $B$ such that all  $a_P$ are congruent modulo $p$ to a certain fixed (independent on $P$) integer $\mathbf{j}$.
Then
$$p^{r-1}D=p^{r-1}\mathbf{j}\left(\sum_{P\in B}(P)\right)+p^{r-1}p \left(\sum_{P\in B}\frac{(a_P-\mathbf{j})}{p}(P)\right)=$$
$$p^{r-1}\mathbf{j}\left(\sum_{P\in B}(P)\right)+p^r \left(\sum_{P\in B}b_P(P)\right)$$
where $b_P=(a_P-\mathbf{j})/p$. Clearly,
$$\sum_{P\in B}b_P=\sum_{P\in B}\frac{(a_P-\mathbf{j})}{p}=\frac{1}{p}\left(\sum_{P\in B}(a_P-\mathbf{j})\right)=\frac{1}{p} n\left(-\mathbf{j}\right)=-p^{r-1}m \mathbf{j}.$$
This implies that
$$p^{r-1}D=p^{r-1}\mathbf{j}\left(\left(\sum_{P\in B}(P)\right)-n(Q)\right)+p^{r-1}\mathbf{j}n(Q)+p^r \left(\sum_{P\in B}b_P(P)\right)=$$
$$p^{r-1}\mathbf{j}\left(\left(\sum_{P\in B}(P)\right)-n(Q)\right)+p^r D_0$$
where $Q$ is any point of $B$ and
$$D_0=p^{r-1}\mathbf{j} m(Q)+\left(\sum_{P\in B}b_P(P)\right).$$
Since $\deg(D)=0$, the degree of $D_0$ is also zero. It follows from Corollary \ref{principalC} that the class of $p^{r-1}D$ is $0$.
\end{proof}


\begin{thebibliography}{99}
\bibitem{Artin} E. Artin, Geometric Algebra. Interscience
Publishers, New York, 1957.


\bibitem{DM}  J. D. Dixon,  B. Mortimer,  Permutation Groups. Springer,  New York Berlin
Heidelberg, 1996.

\bibitem{Elkin} A. Elkin, {\sl Hyperelliptic jacobians with real multiplication}. J. Number Theory {\bf 117} (2006), 53--86.

\bibitem{ElkinZarhin} A. Elkin,  Yu. G. Zarhin, {\sl Endomorphism algebras of hyperelliptic
jacobians and finite projective lines}.  J. Ramanujan Math. Soc.
{\bf 21}  (2006), 169--187.

\bibitem{ElkinZarhin2}
 A. Elkin and Yu. G. Zarhin, {\sl Endomorphism algebras of hyperelliptic Jacobians and finite projective lines}. II, J. Ramanujan Math. Soc. {\bf 25} (2010),  1--23.


\bibitem{GW} U. G\"ortz, T. Wedhorn, Algebraic Geometry I. Vieweg+Teubner Verlag, Wiesbaden, 2010.

\bibitem{Herstein}  I. N. Herstein, Noncommutative rings. The Mathematical Association of America/John Wiley and Sons, 
1968.


\bibitem{FT} W. Feit,  J. Tits,   {\sl Projective representations of
minimum degree of group extensions}. Canad. J. Math. {\bf 30} (1978),
1092--1102.



\bibitem{Klemm} M. Klemm, {\sl \"Uber die Reduktion von Permutationmoduln}. Math. Z. {\bf 143} (1975), 113--117.

\bibitem{Mori}  Sh. Mori, {\sl The endomorphism rings of some abelian
varieties II}. Japan. J. Math. {\bf 3} (1997), 105--109

\bibitem{LangAV} S. Lang,  Abelian varieties, 2nd edn. Springer Verlag, New York, 1983.

\bibitem{Lang} S. Lang, Algebra. 3rd edn. Addison-Wesley,  Reading, MA, 1993.


\bibitem{Mortimer} B. Mortimer, {\sl The modular permutation
representations of the known doubly transitive groups}. Proc.
London Math. Soc. (3) {\bf 41} (1980), 1--20.

\bibitem{MumfordAV} D. Mumford,   Abelian varieties. 2nd edn. Oxford University Press, London
(1974).


 \bibitem{Poonen} B. Poonen, E. Schaefer,  {\sl Explicit descent for jacobians of cyclic covers of
 the projective line}. J. reine angew. Math. {\bf 488}  (1997), 141--188.


 \bibitem{Ribet2} K.  Ribet,  {\sl Galois action on division points of Abelian varieties with real
 multiplications}. Amer. J. Math. {\bf 98}  (1976), 751--804.

\bibitem{Ribet3}  K. Ribet, {\sl Endomorphisms of semi-stable abelian varieties over number fields}.
   Ann. Math. (2) {\bf 101}  (1975), 555--562.

\bibitem{SPoonen} E. Schaefer, {\sl Computing a Selmer group of a jacobian using functions on the curve}.
Math. Ann. {\bf 310} (1998), 447--471.

\bibitem{Serre} J.-P. Serre,   Topics in Galois Theory.
Jones and Bartlett Publishers, Boston-London, 1992.

\bibitem{Shimura} G. Shimura,   Introduction to the arithmetic
theory of automorphic functions. Princeton University Press,
Princeton, 1971.

\bibitem{Shimura2} G. Shimura,  
Abelian varieties with complex multiplication and modular
functions. Princeton University Press, Princeton, 1997.

\bibitem{Silverberg} A. Silverberg,   {\sl Fields of definition for homomorphisms of abelian
varieties}. J. Pure Appl. Algebra {\bf 77}  (1992), 253--262.


\bibitem{V}  B.L. van der Waerden, Algebra I. Achte Auflage.
Springer, Berlin Heidelberg New York, 1971.

\bibitem{WXY} H. Wang, J. Xue,  Ch. F. Yu, {\sl Fixed points and homology of superelliptic jacobians}. Math. Z. {\bf 278} (2014), 169--189.

\bibitem{Xue} J. Xue, {\sl Endomorphism algebras af Jacobians of certain superelliptic curves}. J. Number Theory {\bf 131} (2011), 332--342.

 \bibitem{Xue2} J. Xue,     Ch.-F. Yu, {\sl Endomorphism algebras of factors of certain hypergeometric Jacobians}.Trans. Amer. Math. Soc. 367 (2015), no. 11, 8071--8106.

\bibitem{ZarhinMRL} Yu. G. Zarhin, {\sl Hyperelliptic jacobians without complex multiplication}.
 Math. Res. Letters {\bf 7}  (2000), 123--132.

\bibitem{ZarhinTexel} Yu. G. Zarhin,   {\sl Hyperelliptic jacobians and modular
representations}. In: Moduli of abelian varieties (C. Faber, G. van
der Geer, F. Oort, editors). Progress in Math.  {\bf 195} (2001),
473--490. Birkh\"auser, Boston.

\bibitem{ZarhinMRL2}  Yu. G. Zarhin,   {\sl Hyperelliptic jacobians without
complex multiplication in positive characteristic}. Math. Res.
Letters {\bf 8} (2001), 429--435.  

\bibitem{ZarhinMMJ} Yu. G. Zarhin,  {\sl Very simple $2$-adic representations
 and hyperelliptic jacobians}. Moscow Math. J. {\bf 2} (2002), issue 2, 403-431.

\bibitem{ZarhinP}  Yu. G. Zarhin,  {\sl Hyperelliptic jacobians without Complex Multiplication, Doubly
Transitive Permutation Groups and Projective Representations}.
Contemp. Math. {\bf 300}  (2002), 195--210.

\bibitem{ZarhinSb} Yu. G. Zarhin,  {\sl Endomorphism rings of certain jacobians in finite
characteristic}. Matem. Sbornik  {\bf 193} (2002), issue 8, 39--48;
Sbornik Math. {\bf 193} (8) (2002), 1139-1149.

\bibitem{ZarhinPAMS}  Yu. G. Zarhin,  {\sl Hyperelliptic jacobians and simple groups} U$_3(2^m)$.
 Proc. AMS  {\bf 131} (2003), 95--102. 

\bibitem{ZarhinCrelle} Yu. G. Zarhin,  {\sl Cyclic covers, their Jacobians and endomorphisms}.  J.
reine angew.  Math. {\bf 544} (2002), 91--110. 

 \bibitem{ZarhinSh}  Yu. G. Zarhin,  {\sl Homomorphisms of hyperelliptic jacobians}. In:  Number Theory, Algebra,
Algebraic Geometry (Shafarevich Festschrift).
 Proceedings of the Steklov Institute of Math. {\bf 241} (2003), 79--92. 

\bibitem{ZarhinCamb} Yu. G. Zarhin, {\sl The endomorphism rings of jacobians of cyclic covers of the
projective line}. Math. Proc. Cambridge Philos. Soc. {\bf 136} (2004),
257--267.

\bibitem{ZarhinBSMF}  Yu. G. Zarhin,   {\sl Non-supersingular hyperelliptic
jacobians}. Bull. Soc. Math. France {\bf 132}  (2004), 617--634.

\bibitem{ZarhinL}  Yu. G. Zarhin, {\sl Homomorphisms of abelian
varieties}. In:
 Arithmetic, Geometry and Coding Theory (AGCT 2003) (Y. Aubry, G. Lachaud, editors).
 S\'eminaires et Congr\`es {\bf 11} (2005), 189--215. 

\bibitem{ZarhinM} Yu. G. Zarhin,   {\sl Endomorphism algebras of superelliptic Jacobians}. In:
 Geometric methods in Algebra and Number
 Theory (F. Bogomolov, Yu. Tschinkel, editors). Progress in Math. {\bf 235} (2005), 339--362. Birkh\"auser, Boston Basel
Berlin. 

\bibitem{ZarhinT}   Yu. G. Zarhin, {\sl Very simple representations: variations on a theme of Clifford}, pp. 151--168. In: Progress in Galois Theory (H. Voelklein, T. Shaska, eds.),
Springer Science+Business Media , Inc., 2005.

\bibitem{ZarhinVE}  Yu. G. Zarhin,   {\sl Del Pezzo surfaces of degree $2$ and  jacobians without
complex multiplication}. Trudy St. Petersburg Mat. Obsch.
{\bf 11} (2005), 81--91; AMS Translations - Series 2, vol. {\bf
218} (2006), 67--75. 


\bibitem{ZarhinPisa} Yu. G. Zarhin,   {\sl Superelliptic jacobians}, pp. 363-390.
In:  ``Diophantine Geometry" Proceedings (U. Zannier, ed.),
  Edizioni Della Normali, Pisa,  2007.


 \bibitem{ZarhinMZ}  Yu.G. Zarhin, {\sl Non--isogenous superelliptic jacobians}. Math. Z. {\bf 253}  (2006), 537--554.

 \bibitem{ZarhinMZ2}  Yu.G. Zarhin, {\sl Endomorphisms of superelliptic
 jacobians}. Math. Z. {\bf261} (2009), 691--707, 709.
 
 \bibitem{ZarhinMZ2arxiv}  Yu.G. Zarhin, {\sl Endomorphisms of superelliptic
 jacobians}.  arXiv:math/0605028v6 [math.AG].
 
 \bibitem{ZarhinIsk}  Yu.G. Zarhin, {\sl 
Absolutely simple  Prymians of trigonal curves}. Trudy Steklov Math.
Inst. {\bf 264} (2009), 212--223; English translation:  Proceedings
of the Steklov Institute of Math. {\bf 264} (2009), 204--215.

\bibitem{ZarhinL2}  Yu.G. Zarhin, {\sl 
Hodge classes on certain hyperelliptic prymians}. In: Arithmetic,
Geometry, Cryptography and Coding Theory (AGCT 2011) (Y.Aubry, Ch.
Ritzenthaler, A. Zykin, eds.), Contemporary Math. {\bf 574} (2012),
171--183.

\end{thebibliography}
\end{document}